\documentclass[10pt, reqno]{amsart}
\usepackage{graphicx, amssymb, amsmath, amsthm}
\numberwithin{equation}{section}

\usepackage{multicol}
\usepackage{color}

\usepackage{enumerate}
\usepackage{mathtools}
\usepackage[backref=page]{hyperref} 
\usepackage{amscd}
\def\pa{\partial}

\newcommand{\R}{\mathbb{R}}

\newcommand{\Z}{\mathbb{Z}}

\newtheorem{theorem}{Theorem}[section]

\newtheorem{lemma}[theorem]{Lemma}

\newtheorem{proposition}[theorem]{Proposition}

\theoremstyle{definition}
\newtheorem{definition}[theorem]{Definition}
\newtheorem{remark}[theorem]{Remark}

\makeatletter
\newcommand{\Extend}[5]{\ext@arrow0099{\arrowfill@#1#2#3}{#4}{#5}}
\makeatother
\let\pa=\partial

\let\d=\delta
\let\e=\varepsilon

\let\om=\omega

\let\Om=\Omega


\def\R{\mathbb{R}}
\def\Z{\mathbb{Z}}
\def\RRe{\mathrm{Re}}
\def\IIm{\mathrm{Im}}

\def\One{\uppercase\expandafter{\romannumeral 1}}
\def\Two{\uppercase\expandafter{\romannumeral 2}}
\def\Three{\uppercase\expandafter{\romannumeral 3}}
\def\Four{\uppercase\expandafter{\romannumeral 4}}
\bibliographystyle{elsarticle-harv}

\begin{document}
\title[Defocusing NLS]{Almost sure scattering for defocusing energy critical Hartree equation on $\R^5$} 

\author{Liying Tao}

\address{Graduate School of China Academy of Engineering Physics, Beijing, China, 100088}

	\email{taoliying20@gscaep.ac.cn}

	\author[T. Zhao]{Tengfei Zhao}%
	\address{School of Mathematics and Physics, University of Science and Technology Beijing, Beijing
		100083, China}
	\email{zhao\underline{ }tengfei@ustb.edu.cn}%

\begin{abstract}
We consider the defocusing energy-critical Hartree equation $i\pa_tu+\Delta u=(|\cdot|^{-4}\ast|u|^2)u$ in spatial dimension $d=5$ and prove almost sure scattering with initial data $u_0\in H^s_x(\R^5)$ for any $s\in\R$. The proof relies on the modified interaction Morawetz estimate, the stability theories, the ``Narrowed'' Wiener randomization. We are inspired to consider this problem by the work of Shen-Soffer-Wu \cite{Shen-Soffer-Wu 1}, which treated the analogous problem for the energy-critical Schr\"{o}dinger equation. The new ingredient in this paper are that we take an alternative proof to give the interaction Morawetz estimate. And the nonlocal nonlinearity term will bring some difficulties.
\end{abstract}

 \maketitle

\begin{center}
 \begin{minipage}{100mm}
   { \small {{\bf Key Words:} Hartree equation; Energy-critical; Random; Scattering.}
      {}
   }\\
    { \small {\bf AMS Classification:}
      {35P25, 35Q55, 35R60.}
      }
 \end{minipage}
 \end{center}

\section{Introduction}
In this paper, we consider the defocusing energy-critical Hartree equation:
\begin{equation}\label{NLS}
\begin{cases}
i\pa_tu+\Delta u=F(u),\ (t,x)\in\R\times\R^5,\\
u(0,x)=u_0(x),
\end{cases}
\end{equation}
where $u(t,x)$ is a complex-valued function in $\R\times\R^5$, $\Delta$ denotes the Laplacian in $\R^5$ and $F(u)=(V(\cdot)\ast|u|^2)u$ with $V(x)=|x|^{-4}$, the $\ast$ stands for the convolution in $\R^5$. The Hartree equation arises in the study of Boson stars and other physical phenomena, and in chemistry, it appears as a continuous-limit model for mesoscopic molecular structures, see for example \cite{Miao-Xu-Zhao 2011(PDE)} and references therein.


Define scaling transformation
\begin{equation}\label{scaling}
u_\lambda(t,x)\coloneqq \lambda^{\frac{3}{2}}u(\lambda^2t,\lambda x),\ \lambda>0.
\end{equation}
Clearly, it leaves equation \eqref{NLS} invariant, and the $\dot H^s(\R^5)$-norm of initial data behaves as 
\begin{equation}\label{H1 norm}
\|u_\lambda(0,x)\|_{\dot H_x^s(\R^5)}=\lambda^{s-1}\|u_0\|_{\dot H^s_x(\R^5)}.
\end{equation}
Note that, $\dot H^{1}_x(\R^5)$-norm ($s=1$ in \eqref{H1 norm}) is invariant under the scaling transformation, and we call the $\dot H^{1}_x(\R^5)$ as the critical Sobolev space. If the solution $u$ of \eqref{NLS} has sufficient smoothness and decay at infinity, it conserves mass and energy, which can be defined as follows,
\begin{equation}
M(u(t))\coloneqq\int_{\R^5}|u(t,x)|^2{\rm d}x=M(u_0),
\end{equation}
and
\begin{equation}
E(u(t))
\coloneqq
\frac{1}{2}\int_{\R^5}|\nabla u(t,x)|^2{\rm d}x+\frac{1}{4}\iint_{\R^5\times\R^5}\frac{|u(t,x)|^2|u(t,y)|^2}{|x-y|^4}{\rm d}x{\rm d}y
=E(u_0).
\end{equation}
Here, $E(u(t))$ is invariant under the scaling \eqref{scaling}. In view of this, the Cauchy problem \eqref{NLS} is called as energy-critical.

The Cauchy problem of the Hartree equation
\begin{equation}\label{NLS normal}
i\pa_t u+\Delta u=\mu(|x|^{-\gamma}\ast|u|^2)u,\ (t,x)\in\R\times\R^d
\end{equation}
has been intensively studied, where $\mu=\pm1$ and $0<\gamma<d$. 
\eqref{NLS normal} is referred as the defocusing case when $\mu=1$, and as the focusing case when $\mu=-1$. 
As is well known, Miao-Xu-Zhao \cite{Miao-Xu-Zhao 2008(PDE 22-44)} showed that the Cauchy problem \eqref{NLS normal} is locally well-posed in $H_x^s(\R^d)$ for $s\geq\max\{0,\frac{\gamma}{2}-1\}$ and small data scattering result in $H_x^1(\R^d)$ by using the Strichartz estimate and the contraction mapping argument. Ill-posedness for equation \eqref{NLS normal} in $H^s_x(\R^d)$ for any $s<\max\{0,\frac{\gamma}{2}-1\}$ has also been established in \cite{Miao-Xu-Zhao 2008(PDE 22-44)}. 

Many results have been obtained so far regarding the long-term dynamics of \eqref{NLS normal}. Using the method of Morawetz and Strauss \cite{Morawetz-Strauss 1972}, Ginibre-Velo \cite{Ginibre-Velo 1998(29-60)} developed the scattering theory of the defocusing $\dot H_x^1$-subcritical case $(2<\gamma<\min\{4,d\})$. Later, based on a new Morawetz estimate that is independent of the nonlinearity, Nakanishi gave an alternative proof for the result in \cite{Nakanishi 1999(107-118)}. Following the approach of Bourgain \cite{Bourgain 1998(267-297)}, Killip-Visan-Zhang \cite{Killip-Visan-Zhang 2009} and Tao \cite{Tao 2005(57-80)}, Miao-Xu-Zhao \cite{Miao-Xu-Zhao 2007(605-627)} obtained the global well-posedness and scattering results for the defocusing Hartree equation $i\pa_t u+\Delta u=(|x|^{-4}\ast|u|^2)u$ for the large radial data in $\dot H_x^1(\R^d)$, where $d\geq 5.$
Later, in \cite{Miao-Xu-Zhao 2011(PDE)}, the authors removed the radial assumption by using the frequency localized interaction Morawetz estimate. More detailed results of equation \eqref{NLS normal} can be found in \cite{{Cao-Guo},{Krieger-Lenzman-Raphael},{Miao-Xu-Zhao 2009(PDE 1831-1852)},{Miao-Xu-Zhao 2009(213-236)},{Miao-Xu-Zhao 2009(49-79)},{Miao-Xu-Zhao 2010(23-50)}}. 

From the seminal work in \cite{Christ-Colliander-Tao}, the Cauchy problem \eqref{NLS} with the initial data $H_x^s(\R^d)$ is ill-posed for any $s<1$. However, using the probabilistic tools, there has been a significant development in the probabilistic construction of local-in-time and global-in-time solutions to several nonlinear dispersive equations below the critical scaling regulartity. Building on the work of Lebowitz-Rose-Speer \cite{Lebowitz-Rose-peer}, Bourgain \cite{Bourgain-1994,Bourgain-1996} proved the invariance of the Gibbs measure under the flow of the equation and used this invariance to prove almost certain global well-posedness in the support of the measure. In \cite{Burq-Tzvetkov1,Burq-Tzvetkov2}, Burq and Tzvetkov studied the cubic nonlinear wave equation on a three-dimensional compact manifold. They constructed large sets of initial data of super-critical regularity, which lead to local solutions via a randomization procedure based on the expansion of the initial data with respect to an orthonormal basis of eigenfunctions of the Laplacian. Together with invariant measure considerations, they also proved almost sure global existence for the cubic nonlinear wave  equation (NLW) on the three-dimensional unit ball.

Next, we provide a concise summary of the global well-posedness and scattering result for the nonlinear Schr\"{o}dinger equation (NLS)
\begin{equation}\label{NLS1}
i\pa_tu+\Delta u=\mu|u|^pu,\ (t,x)\in\R\times\R^d, 
\end{equation}
and NLW
\begin{equation}\label{NLW}
\pa^2_tu-\Delta u+\mu|u|^p u=0,\ (t,x)\in\R\times\R^d
\end{equation}
with the random initial data. For NLS, by using the Wiener randomization, B$\acute{\text{e}}$nyi-Oh-Pocovnicu \cite{Benyi-Oh-Pocovnicu 2015} obtained that the cubic NLS is almost surely locally well-posed in $H_x^s(\R^d)$ with $d\geq 3$ and $s>\frac{(d-1)(d-2)}{2(d+1)}$. The authors also established that the cubic NLS is almost surely global well-posed and scattering for the small data in $H_x^s(\R^d)$, where $d\geq 3$ and $\frac{(d-1)(d-2)}{2(d+1)}<s\leq\frac{d-2}{2}$. Subsequently, the results were further improved in \cite{Benyi-Oh-Pocovnicu 2019} and \cite{Pocovnicu-Wang}. For NLW, based on Bourgain's high-low frequency decomposition and improved averaging effects for the free evolution of the randomized initial data, Luhrmann-Mendelson \cite{Luhrmann-Mendelson} proved almost sure existence of global solutions to NLW \eqref{NLW} with respect to the randomization of the initial data in $H_x^s(\R^3)\times H_x^{s-1}(\R^3)$ for $3\leq\rho<5$ and $\frac{\rho^3+5\rho^2-11\rho-3}{9\rho^2-6\rho-3}<s<1$. Later, the result of  \cite{Luhrmann-Mendelson} was improved in \cite{Luhrmann-Mendelson-2016,Sun-Xia}. Recently, Pocovnicu proved almost sure global existence and uniqueness for the energy-critical defocusing NLW \eqref{NLW} with the random initial data in $H_x^s(\R^d)\times H_x^{s-1}(\R^d)$, where $0<s\leq1$ for $d=4$ and $0\leq s\leq1$ for $d=5$. Combining the method in \cite{Pocovnicu1} with a uniform probabilistic energy bound for approximating the random solution, Oh-Pocovnicu \cite{Oh-Pocovnicu-1} proved almost sure global well-posedness of the energy-critical defocusing quintic NLW on $\R^3$ with the random initial data in $H_x^s(\R^d)\times H_x^{s-1}(\R^d)$ for $s>\frac{1}{2}$.

The first large date almost sure scattering result was proposed by Dodson-L\"{u}hrmann-Mendelson \cite{Dodson-Luhrmann-Mendelson 2020} for the energy-critical wave equation. 
In \cite{Dodson-Luhrmann-Mendelson 2020}, the authors considered the energy-critical defocusing NLW on $\R^4$ and obtained almost sure global existence and scattering theory for the randomized radially symmetric initial data in $H_x^s(\R^4)\times H_x^{s-1}(\R^d)$ for $\frac{1}{2}<s<1$ by using a radial Sobolev estimate and global-in-time random Strichartz estimate.
Then, the result in \cite{Dodson-Luhrmann-Mendelson 2020} was extended by Bringmann in \cite{{Bringmann 2020(1011-1050)},{Bringmann 2021(1931-1982)}}. Moreover, the methods of \cite{Dodson-Luhrmann-Mendelson 2020} were further developed by Killip-Murphy-Visan \cite{Killip-Murphy-visan}, and they proved that almost sure global existence and scattering for the energy-critical nonlinear Schr\"{o}dinger equation with randomized spherically symmetric initial data in $H^s(\R^4)$, where $\frac{5}{6}<s<1.$ The result in \cite{Killip-Murphy-visan} was then improved in \cite{Dodson-Luhrmann-Mendelson 2019} to $\frac{1}{2}<s<1$. For more probabilistic results of NLS \eqref{NLS1}, we refer to \cite{{Murphy 2019},{Nakanishi-Yamamoto},{Shen-Soffer-Wu 2},{Spitz},{Tao-Zhao}}. 

In \cite{Shen-Soffer-Wu 1}, Shen-Soffer-Wu considered the defocusing energy-critical NLS 
\begin{equation}\label{NLS2}
i\pa_tu+\Delta u=|u|^{\frac{4}{d-2}}u,\ (t,x)\in\R\times\R^d
\end{equation}
for $d=3$ and $d=4$. They proved almost sure scattering with non-radial data in $H_x^s(\R^d)$ for any $s\in\R$ by introducing a new version of randomization based on rescaled cube decomposition in frequency, called ``Narrowed" Wiener randomization. Luo \cite{Luo} developed a randomization that was adapted to the waveguide manifold $\R^3\times\mathbb{T}$ based on the ``Narrowed" Wiener randomization and proved the almost sure scattering for NLS \eqref{NLS2} in $H_x^s(\R^3\times\mathbb{T})$, where $s\in\R.$

In this paper, on the basis of  the ``Narrowed" Wiener randomization and random Strichartz estimate in \cite{Shen-Soffer-Wu 1}, we will focus on the almost sure global well-posedness and scattering theory for the defocusing energy-critical Hartree equation \eqref{NLS} for any $s\in\R$.

\subsection{Randomization}
Before stating the main result, we first recall the definition of the ``Narrowed" Wiener randomization in \cite{Shen-Soffer-Wu 1}. Let $a\in\mathbb{N}$ be a positive integer and $N\in2^{\mathbb{N}}$ be a dyadic number. Denote the cube sets
\begin{equation*}
O_N\coloneqq\{\xi\in\R^d:|\xi_j|\leq N, j=1,2,\cdots,d\},
\end{equation*}
and 
\begin{equation*}
Q_N\coloneqq O_{2N}\backslash O_N.
\end{equation*}
Define 
\begin{equation*}
\mathcal{A}(Q_N)
\coloneqq
\{Q:Q\ \text{is a dyadic cube with length}\ N^{-a}\text{ and }Q\subset Q_N\},
\end{equation*}
and 
$$\mathcal{Q}\coloneqq\{O_1\}\cup\{Q:Q\in\mathcal{A}(Q_N)\text{ and }N\in 2^{\mathbb{N}}\}.$$
By the above construction, $\R^d$ can be divided into
\begin{equation*}
\R^d=O_1\cup(\cup_{N\in2^{\mathbb{N}}}\cup_{Q\in\mathcal{A}(Q_N)}Q)\eqqcolon\cup_{Q\in\mathcal{Q}}Q.
\end{equation*}
Since $\mathcal{Q}$ is countable, the cubes in $\mathcal{Q}$ can be renumbered as follows
\begin{equation*}
\mathcal{Q}=\{Q_j:j\in\mathbb{N}\}.
\end{equation*}

The ``Narrowed" Wiener randomization is defined in the following.
\begin{definition}[{\cite[\emph{Definition 1.1}]{Shen-Soffer-Wu 1}}]\label{randomization}
Let $d\geq 1$ and $s\in\R$, the parameter $a\in\mathbb{N}$ satisfies
\begin{equation}\label{parameter a}
a>\max\left\{4-2s,5-4s,-\frac{1}{2}s,3\right\}.
\end{equation}
Let $\phi_j\in C_c^\infty(\R^d)$ be a real-valued bump function such that $\phi_j(\xi)=1$ for $\xi\in Q_j$ and $\phi_j(x)=0$ for $\xi\notin 2Q_j$, where $2Q_j$ is the cube with the same center as $Q_j$ and diam$(2Q_j)=$2diam$(Q_j)$. Let
\begin{equation*}
\psi_j(\xi)\coloneqq\frac{\phi_j(\xi)}{\sum_{k\in\mathbb{N}}\phi_k(\xi)}.
\end{equation*}
For $f:\R^d\to\mathbb{C}$, we define
\begin{equation}\label{Box j}
\Box_j f(x)=\mathcal{F}^{-1}_{\xi}(\psi_j(\xi)\mathcal{F}_xf(\xi))(x).
\end{equation}  

Let $\{g_j\}_{j\in\mathbb{N}}$ be a sequence of independent, mean-zero, complex Gaussian random variables on the probability space $(\Omega,\mathcal{A},\mathbb{P})$. Then, for any function $f$, we define the randomization of $f$ by
\begin{equation}\label{randomization of f}
f^\omega(x)\coloneqq\sum_{j\in\mathbb{N}}g_j(\omega)\Box_jf(x). 
\end{equation}
\end{definition}

\begin{remark}
The range of the parameter $a$ values should ensure that the bootstrap argument can be used to prove the almost conservation laws (Proposition \ref{almost conservation law}) and that the $Y$-norm can be bounded in the random sense.
\end{remark}

\subsection{Notion}
In the following, we denote that $A\lesssim B$ if $A\leq CB$ for some constants $C>0$, and $A\sim B$ means that $A\lesssim B$ and $B\lesssim A$. Moreover, we write $A\ll B$ if the implicit constant should be regarded as small. For any slab $I\subseteq\R$, the $X$-norm is defined as  
\begin{equation}
\|f\|_{X(I)}
\coloneqq
\|\langle\nabla\rangle f\|_{L_t^2(I;L_x^{\frac{10}{3}}(\R^5))}+\|f\|_{L_t^4(I;L_x^4(\R^5))}+\|f\|_{L_t^4(I;L_x^5(\R^5))}+\|f\|_{L_t^3(I;L_x^6(\R^5))},
\end{equation}
and the $Y$-norm is defined as
\begin{equation}\label{Y norm section 1}
\begin{split}
\|f\|_{Y(I)}
\coloneqq
&\|\langle\nabla\rangle^{s+\frac{1}{2}a}f\|_{L_t^2(I;L_x^5(\R^5))}+\|f\|_{L_t^4(I;L_x^5(\R^5))}+\|f\|_{L_t^4(I;L_x^4(\R^5))}\\
&+\|f\|_{L_t^6(I;L_x^3(\R^5))}.
\end{split}
\end{equation}

\subsection{Main result}
We are now in a position to state our results.
\begin{theorem}\label{main theorem}
Let $s\in\R$ and $u_0\in H_x^s(\R^5)$. Let $u_0^\omega$ be the associated randomized initial data as defined in \eqref{randomization of f}. Then, for almost every $\omega\in\Omega,$ there exists a global solution $u$ of \eqref{NLS} such that
\begin{equation}
u-e^{it\Delta}u_0^\omega\in C(\R;H_x^1(\R^5)).
\end{equation}
The solution $u$ scatters in the sense that there exists $u_\pm\in H_x^1(\R^5)$ such that
\begin{equation}\label{main th scattering}
\lim_{t\to\pm\infty}\|u-e^{it\Delta}u_0^\omega-e^{it\Delta}u_\pm\|_{H_x^1(\R^5)}=0.
\end{equation}
\end{theorem}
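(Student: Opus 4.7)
The plan is to decompose the solution as $u = F^\omega + v$, where $F^\omega(t,x) := (e^{it\Delta}u_0^\omega)(x)$ is the free evolution of the randomized data, so that the nonlinear remainder $v$ satisfies
\begin{equation*}
i\partial_t v + \Delta v = F(F^\omega + v), \qquad v(0,x) = 0,
\end{equation*}
with $F(u) = (|x|^{-4}*|u|^2)u$. Following Shen-Soffer-Wu, the overall strategy is to prove that almost surely $v$ exists globally in $C(\R;H^1_x)\cap X(\R)$; the scattering claim \eqref{main th scattering} then follows from standard final-state arguments applied to $v$.

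First I would fix, via Khintchine's inequality together with the improved integrability coming from the narrowed (scale $N^{-a}$) Wiener cube decomposition, a full-measure event $\Omega^\star \subset \Omega$ on which $\|F^\omega\|_{Y(\R)} < \infty$. The lower bound on $a$ in \eqref{parameter a} is precisely what buys the $\langle\nabla\rangle^{s+a/2}$ regularity required to close the heaviest piece of \eqref{Y norm section 1}. After this step, $F^\omega$ may be regarded as a deterministic forcing with finite $Y$-norm, and the problem reduces to a purely deterministic semilinear question about $v$. On this basis I would develop a local well-posedness and stability theory for $v$ in the energy class: expanding $F(F^\omega + v)$ yields a purely cubic-in-$v$ term plus mixed terms involving $F^\omega$, and after dispatching the nonlocal convolution with $|x|^{-4}$ by Hardy-Littlewood-Sobolev, the mixed terms are estimated by pairing $Y$-norm factors on $F^\omega$ with $X$-norm factors on $v$; a contraction argument then yields short-time existence controlled by the energy of $v$ and $\|F^\omega\|_{Y(I)}$, along with a perturbation lemma for later use.

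The global conclusion rests on a bootstrap argument running two intertwined a priori bounds. The first is the almost conservation law (Proposition \ref{almost conservation law}), showing $E(v(t))$ grows only through inhomogeneous increments controlled by $\|F^\omega\|_Y$; this is obtained by differentiating $E(v)$ in time and bounding the error terms produced by the cross interactions with $F^\omega$. The second is a modified interaction Morawetz estimate -- the alternative derivation announced in the abstract -- which produces a global $L^3_tL^6_x$-type bound on $v$. With both estimates in force, a continuity argument closes the bootstrap, upgrades the local theory to a global $v \in C(\R;H^1_x)\cap X(\R)$, and delivers the scattering states $u_\pm \in H^1_x$ by the standard linear-end-state construction.

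The main obstacle I expect is precisely the modified Morawetz estimate in the presence of the nonlocal Hartree nonlinearity and the rough forcing $F^\omega$. Unlike in the pure-power NLS treated by Shen-Soffer-Wu, every perturbation term of the shape $(|x|^{-4}*(F^\omega\bar v))F^\omega$, $(|x|^{-4}*|F^\omega|^2)v$, or $(|x|^{-4}*(v\bar F^\omega))v$ appearing in the Morawetz identity contributes a manifestly signless, nonlocal term; controlling them requires careful multilinear Hardy-Littlewood-Sobolev estimates distributing the factors between $Y(\R)$ (for $F^\omega$) and $X(\R)$ (for $v$), so that the resulting errors can be absorbed by the positive Morawetz bulk and by a small multiple of $\|F^\omega\|_Y$. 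Getting this bookkeeping to close simultaneously with the energy increment is where the bulk of the technical work will lie.
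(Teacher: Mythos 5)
There is a genuine gap in the global-in-time part of your argument. You assert that the modified interaction Morawetz estimate ``produces a global $L_t^3L_x^6$-type bound on $v$'' and that, together with the almost conservation law, a continuity argument then upgrades the local theory to a global solution. But $L_t^3L_x^6(\R\times\R^5)$ is a \emph{scale-invariant} (energy-critical) norm, and the interaction Morawetz estimate does not produce any critical bound: what it yields (see \eqref{interaction morawetz estimate}) is $\||\nabla|^{-1/2}w\|_{L_{t,x}^4}^4\lesssim \|w\|_{L_t^\infty L_x^2}^2\|w\|_{L_t^\infty\dot H_x^{1/2}}^2+\cdots$, a strictly subcritical quantity whose size grows with the mass and energy. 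For an energy-critical equation, bounded energy plus a subcritical Morawetz bound do not imply global space-time bounds --- that is exactly why the deterministic energy-critical problem requires the induction-on-energy/concentration-compactness machinery. In the paper the Morawetz estimate is used \emph{only} to close the bootstrap for the almost conservation laws (Proposition \ref{almost conservation law}); the global space-time bound is then imported as a black box from the deterministic theory of Miao--Xu--Zhao for the unperturbed equation \eqref{tilde w} (Lemma \ref{global space-time bound}) and transferred to the perturbed equation by the long-time stability lemma (Lemma \ref{Long stability}) applied on a finite partition of $\R$ into intervals where $\|v\|_{Y(I_k)}$ is small. Your proposal mentions a perturbation lemma in passing but never invokes the deterministic global theory, and without it the argument does not close.

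A second, related problem is your choice of decomposition. You put the \emph{entire} free evolution $e^{it\Delta}u_0^\omega$ into the forcing and start the remainder at $v(0)=0$, whereas the paper splits $u_0^\omega$ at a large frequency $N_0=N_0(A)$ as in \eqref{v0 w0}, keeping the low frequencies as the (nonzero, $H^1$) initial datum of the remainder with $M_w(0)\lesssim N_0^{-2s}$, $E_w(0)\lesssim N_0^{2(1-s)}$. The almost-conservation bootstrap closes precisely because the forcing is supported at frequencies $\gtrsim N_0$, so that $\||\nabla|^l v\|_{L_t^2L_x^5}\lesssim N_0^{l-s-a/2}$ and the mass/energy increments ($\sim N_0^{3/2-4s-a/2}$ and $N_0^{7/2-4s-a/2}$) are beaten by the initial sizes $N_0^{-2s}$ and $N_0^{2(1-s)}$ once $a$ satisfies \eqref{parameter a}. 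With the full forcing there is no large parameter to exploit: writing $E=\sup_t E_v(t)$ and $L=\||\nabla|^{-1/2}v\|_{L^4_{t,x}}^4$, your coupled system has the shape $E\lesssim A^4+AE^{1/2}L^{1/2}$, $L\lesssim E^2+\cdots$, i.e.\ $E\lesssim A^4+AE^{3/2}$, which does not close for large $\|F^\omega\|_{Y}$. So you would need to reintroduce the high-low splitting (or some substitute mechanism) before either the almost conservation law or the stability iteration can be run.
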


\begin{remark}
In the deterministic sense, scattering result cannot be obtained below the critical regularity for the Hartree equation \eqref{NLS}, see \cite{Christ-Colliander-Tao} and \cite{Miao-Xu-Zhao 2008(PDE 22-44)}. Nevertheless, we construct the global solution and obtain the scattering result to \eqref{NLS} below the energy space in the stochastic sense.

\end{remark}

\begin{remark}
The restriction for the dimension $d=5$ stems from the interaction Morawetz estimate (Lemma \ref{Classical interaction Morawetz}) and the random Strichartz estimate (Lemma \ref{random strichartz estimate}). For the details, we refer to Remark \ref{d>5 cannot}.
\end{remark}

Theorem \ref{main theorem} is in the spirit of the almost sure global well-posedness results in \cite{Shen-Soffer-Wu 1}. Namely, let $u_0\in H_x^s(\R^5)$ with $s\in\R$.
We expand the initial data $u_0(x)$ and the solution $u(t,x)$ to equation \eqref{NLS} as
\begin{equation}\label{v0 w0}
u_0(x)=P_{\geq N_0}u_0(x)+P_{<N_0}u_0(x)\eqqcolon v_0(x)+w_0(x),
\end{equation}
\begin{equation}\label{v w}
u(t,x)=e^{it\Delta}v_0+(u-e^{it\Delta}v_0)\eqqcolon v(t,x)+w(t,x), 
\end{equation}
where $N_0\gg 1$.
Then, \eqref{NLS} can be reformulated as the following perturbed equation
\begin{equation}\label{NLS-hartree-w}
\begin{cases}
i\pa_t w+\Delta w=(|\cdot|^{-4}\ast|w|^2)w+e,\\
w(0,x)=w_0(x),
\end{cases}
(t,x)\in\R\times\R^5,
\end{equation}
where $e=(|\cdot|^{-4}\ast|u|^2)u-(|\cdot|^{-4}\ast|w|^2)w.$
Define the energy $E_w(t)$ and mass $M_w(t)$ for the solution $w(t,x)$ of \eqref{NLS-hartree-w} as follows
\begin{equation}
E_w(t)\coloneqq\frac{1}{2}\int_{\R^5}|\nabla w(t,x)|^2{\rm d}x+\frac{1}{4}\iint_{\R^5\times\R^5}\frac{|u(t,x)|^2|u(t,y)|^2}{|x-y|^4}{\rm d}x{\rm d}y,
\end{equation}
\begin{equation}
M_w(t)\coloneqq\int_{\R^5}|w(t,x)|^2{\rm d}x.
\end{equation}

Next, we give the the global well-posedness and scattering of equation \eqref{NLS-hartree-w} by using perturbation theory. One of the key ingredients in the application of perturbation theory was the mass and energy bounds, see hypothesis \eqref{deterministic Condition2}. Moreover, the error $e=(|\cdot|^{-4}\ast|u|^2)u-(|\cdot|^{-4}\ast|w|^2)w$ can be made small for short time intervals thanks to the fact that the random linear part $v^\omega$ satisfies the random Strihartz estimate (Lemma \ref{random strichartz estimate}), which also ensures that the perturbation theory can be used.

\begin{theorem}\label{deterministic problem}
Let $a\in\mathbb{N}$ satisfy \eqref{parameter a}, $s\in\R$ and $A>0$. Then, there exists $N_0=N_0(A)\gg 1$ such that the following hold. Let $u_0\in H_x^s(\R^5)$, $v_0$ and $w_0$ satisfy \eqref{v0 w0}. Moreover, let $v$ and $w$ satisfy \eqref{v w} and \eqref{NLS-hartree-w}. Suppose that $v\in Y(\R)$, $w_0\in H_x^1(\R^5)$ such that
\begin{equation}\label{deterministic Condition1}
\|u_0\|_{H_x^s(\R^5)}+\|v\|_{Y(\R)}\leq A,
\end{equation}
and
\begin{equation}\label{deterministic Condition2}
M_w(0)\leq AN_0^{-2s},\  E_w(0)\leq AN_0^{2(1-s)}.
\end{equation}
Then, there exists a solution $w$ to \eqref{NLS-hartree-w} in $ C(\R;H_x^1(\R^5))$.

Furthermore, there exists a solution $u=v+w$ to \eqref{NLS} on $\R$ and  $u_{\pm}\in H_x^1(\R^5)$ such that
\begin{equation*}
\lim_{t\to\pm\infty}\|u(t)-v(t)-e^{it\Delta}u_\pm\|_{H_x^1(\R^5)}=0.
\end{equation*}
\end{theorem}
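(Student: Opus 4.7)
The plan is to prove Theorem \ref{deterministic problem} by a continuity argument on the scattering norm $\|w\|_{X(I)}$, combining three pillars: the almost-conservation laws for the perturbed mass $M_w$ and energy $E_w$ (Proposition \ref{almost conservation law}), the interaction Morawetz estimate (Lemma \ref{Classical interaction Morawetz}), and a stability/perturbation theorem for the Hartree equation. The structural observation driving every step is that $v = e^{it\Delta}v_0$ is a global linear solution with bounded $Y(\R)$-norm, while the unknown $w$ is forced to carry all of the $H^1_x$-part of $u$.

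First, I would expand the forcing term. Writing $u = v+w$ and $|u|^2 = |v|^2 + 2\RRe(v\bar w) + |w|^2$, the error $e = (|\cdot|^{-4}\ast|u|^2)u - (|\cdot|^{-4}\ast|w|^2)w$ decomposes into finitely many convolution-trilinear pieces, each containing at least one factor of $v$. Using Hardy--Littlewood--Sobolev together with the $Y$-bound on $v$ from \eqref{deterministic Condition1}, each mixed term can be estimated in the dual of $X(I)$ and in the $L^1_t L^2_x$-type norms needed for the mass/energy identities; smallness is extracted either by partitioning $\R$ into finitely many intervals on which $\|v\|_{Y}$ is small (possible since $v \in Y(\R)$) or by inverse powers of $N_0$ coming from $v_0 = P_{\geq N_0}u_0$.

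Second, I would feed these multilinear estimates into the evolution identities for $M_w$ and $E_w$ and invoke Proposition \ref{almost conservation law} to obtain, on any bootstrap interval $I$, the a priori bounds $M_w(t)\lesssim_A N_0^{-2s}$ and $E_w(t)\lesssim_A N_0^{2(1-s)}$, and hence $\|w(t)\|_{H^1_x}\lesssim_A N_0^{1-s}$ uniformly in $t \in I$. The admissible range \eqref{parameter a} of $a$ and the precise mix of exponents in the $Y$-norm are used to make this bootstrap close. With the $H^1$-bound secured, the modified interaction Morawetz estimate of Lemma \ref{Classical interaction Morawetz} applied to $w$---again handling the nonlocal perturbation $e$ via the estimates of the first step---yields a global-in-time bound on a scattering-type norm such as $\|w\|_{L^3_t L^6_x(\R\times\R^5)}$. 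Subdividing $\R$ into finitely many intervals on which both $\|v\|_{Y}$ and the local scattering norm of $w$ are small, the stability theorem for the defocusing Hartree equation upgrades the local $X$-norm bounds to a single global bound $\|w\|_{X(\R)}<\infty$. Standard Strichartz arguments then produce the scattering states $u_{\pm}\in H^1_x$ for $w$, and the conclusion for $u = v+w$ follows since $v = e^{it\Delta}v_0$ is itself a linear solution.

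The main obstacle is the nonlocal Hartree nonlinearity. The convolution with $|x|^{-4}$ couples far-apart spatial regions and produces commutator-type contributions in the Morawetz identity that simply do not appear for power-type nonlinearities, so the pointwise H\"older manipulations familiar from the NLS case must be replaced by Hardy--Littlewood--Sobolev bounds that are sharp in both exponents and weights. This constraint is what forces the precise choice of indices in $X$ and $Y$, and it is also why $d=5$ is essentially the only workable dimension: it is simultaneously the setting in which the interaction Morawetz estimate closes for the Hartree nonlinearity and in which the random Strichartz admissibility of Lemma \ref{random strichartz estimate} is compatible with the scaling of $e$. Balancing these two constraints against the parameter range \eqref{parameter a} is where most of the technical effort will reside.
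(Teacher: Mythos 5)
Your first two steps match the paper's architecture: the trilinear decomposition of $e$ with at least one factor of $v$ in each piece, estimated via Hardy--Littlewood--Sobolev and the $Y$-bound, feeds the differential identities for $M_w$ and $E_w$, and the bootstrap of Proposition \ref{almost conservation law} (with the interaction Morawetz estimate used \emph{inside} that bootstrap to control $\||\nabla|^{-\frac12}w\|_{L^4_{t,x}}$ and hence $\|w\|_{L^6_tL^3_x}$ in the error terms) yields the uniform bound $\|w(t)\|_{H^1_x}\lesssim_A N_0^{1-s}$. Up to that point you are on the paper's track.

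The gap is in your third step. You claim that, once the $H^1$ bound is secured, the interaction Morawetz estimate applied to $w$ ``yields a global-in-time bound on a scattering-type norm such as $\|w\|_{L^3_tL^6_x}$,'' and you then subdivide $\R$ and run stability off that bound. This cannot work for an energy-critical problem. The Morawetz estimate controls $\||\nabla|^{-\frac12}w\|_{L^4_{t,x}}$, which interpolated against $L^\infty_tH^1_x$ gives at best $\|w\|_{L^6_tL^3_x}$ --- an $\dot H^{1/2}$-level, hence \emph{subcritical}, space-time norm. A finite subcritical norm together with an $H^1$ bound does not imply finiteness of the critical scattering norm $\|w\|_{X}$ for the energy-critical equation (if it did, the deterministic energy-critical scattering problem would be trivial). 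Moreover the bound it produces, $N_0^{1-4s}$, is large, and more importantly it is not a norm that closes the local/stability estimates. The missing ingredient is Lemma \ref{global space-time bound}: the paper imports the full deterministic global well-posedness and scattering theory of the \emph{unperturbed} energy-critical Hartree equation \eqref{tilde w} from Miao--Xu--Zhao as a black box. The correct architecture is: at each time $t_k$, launch the reference solution $\tilde w^{(t_k)}$ of \eqref{tilde w} with data $w(t_k)$; the $H^1$ bound from the almost conservation laws plus Lemma \ref{global space-time bound} give $\|\tilde w^{(t_k)}\|_{X(\R)}\leq C(A,N_0)$; then the long-time stability Lemma \ref{Long stability}, applied on finitely many intervals where $\|v\|_{Y(I_k)}$ is small, transfers this bound to $w$ interval by interval. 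Without that non-perturbative input your argument does not close: you would be trying to bootstrap the critical norm of $w$ from quantities (Morawetz, $H^1$) that are strictly weaker than it.
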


\begin{remark}\label{remark deterministic}
Theorem \ref{deterministic problem} shows that if $v\in Y(\R)$, $w_0\in H_x^1(\R^5)$, condition \eqref{deterministic Condition1} and the condition \eqref{deterministic Condition2} are satisfied, the global well-posedness of equation \eqref{NLS-hartree-w} can be obtained. Furthermore, the global well-posedness and the scattering of Hartree equation \eqref{NLS} are derived. Using the random Strichartz estimate for the random linear part $v^\omega=e^{it\Delta}P_{\geq N_0}u^\omega_0(x)$, $w_0^\omega=P_{< N_0}u^\omega_0(x)$ (the low frequency part of the initial value $u^\omega_0$) and $w$, we can get 
\begin{equation}\label{random sense result1}
v^\omega\in Y(\R),\  w^\omega_0\in H_x^1(\R^5),   
\end{equation}
and there exists $A>0$ such that
\begin{equation}\label{random sense result2}
\|u^\omega_0\|_{H_x^s(\R^5)}+\|v^\omega\|_{Y(\R)}\leq A,\  M_{w^\omega}(0)\leq AN_0^{-2s},\  E_{w^\omega}(0)\leq AN_0^{2(1-s)}.
\end{equation}
Here,
\begin{equation*}
E_{w^\omega}(t)\coloneqq\frac{1}{2}\int_{\R^5}|\nabla w^\omega(t,x)|^2{\rm d}x+\frac{1}{4}\iint_{\R^5\times\R^5}\frac{|u^\omega(t,x)|^2|u^\omega(t,y)|^2}{|x-y|^4}{\rm d}x{\rm d}y,
\end{equation*}
\begin{equation*}
M_{w^\omega}(t)\coloneqq\int_{\R^5}|w^\omega(t,x)|^2{\rm d}x.
\end{equation*}
Combining the random sense results \eqref{random sense result1} and \eqref{random sense result2} with Theorem \ref{deterministic problem}, Theorem \ref{main theorem} can be proved, see Section \ref{section 6}.
\end{remark}

\begin{remark}\label{s<0}
The proof of the related result in Theorem \ref{deterministic problem} is straight when $0<s<1$, since the mass conservation law is available. Hence, we only need to show Theorem \ref{deterministic problem} for $s<0$.
\end{remark}

\begin{remark}
The proof process of Theorem
\ref{deterministic problem} is briefly summarised as follows:

(1). First, we introduce the random Strichartz estimate (Lemma \ref{random strichartz estimate}), which was established by Shen-Soffer-Wu \cite{Shen-Soffer-Wu 1}. Then, unlike the proof in \cite{Shen-Soffer-Wu 1}, we take an alternative proof to give the interaction Morawetz estimate for the solution $w$ of \eqref{NLS-hartree-w} in Proposition \ref{morawetz NLS-hartree-w}.

(2). Using the fixed-point theorem, we prove the local well-posedness for the perturbed equation \eqref{NLS-hartree-w}. Specifically, we obtain that there exists a unique solution $w$ of \eqref{NLS} in
\begin{equation*}
B_{\delta,T}\coloneqq\{w\in C([0,T];H_x^1):\|w\|_{L_t^\infty H_x^1}\leq2CR,\ \|w\|_{X([0,T])}\leq 2C\delta\}, 
\end{equation*}
where $\delta$ is sufficient small. 
We also give the perturbation lemmas (Lemma \ref{short stability}, Lemma \ref{Long stability}) and the global-time bound for the Hartree equation \eqref{tilde w} (Lemma \ref{global space-time bound}), which were the main method to prove Theorem \eqref{deterministic problem}.

(3). A fundamental difficulty in the study of the perturbed equation \eqref{NLS-hartree-w} is that it is no longer a Hamiltonian equation, which lead to no conserved functional, and to obtain energy and mass bounds is non-trivial. However, by using the Morawetz estimate \eqref{interaction morawetz estimate} and the bootstrap argument, we obtain the almost conservation laws for the solution $w$ under the assumption that the mass $M_w(0)$ and the energy $E_w(0)$ are bounded.

(4). Finally, we split $\R$ into small time intervals $I_k$ such that $\R=\cup I_k$ and $\|v\|_{Y(I_k)}$ is sufficiently small. Based on the almost conservation laws deduced from Lemma \ref{almost conservation law},  we can apply the perturbation lemma (Lemma \ref{Long stability}) iteratively to any small time interval $I_k$. By summing the estimates on each interval, Theorem \eqref{deterministic problem} can be proved.

Once the proof of Theorem \ref{deterministic problem} is completed, Theorem \ref{main theorem} can also be proved, see Remark \ref{remark deterministic}.
\end{remark}

We next give the outline of this paper. In Section \ref{section 2}, we present some deterministic and probabilistic lemmas. In Section \ref{section 3}, we give the interaction Morawetz estimate for the solution $w$ of equation \eqref{NLS-hartree-w}. In Section \ref{section 4}, we prove the local well-posedness and establish the stability theories. In Section \ref{section 5}, we show the almost conservation laws and give a proof of Theorem \ref{deterministic problem}. Finally, we give a detailed proof of Theorem \ref{main theorem} in Section \ref{section 6}. 

\section{Basic tools and some elementary Properties}\label{section 2}
In this section, we recall some tools and useful results for the Hartree equation \eqref{NLS}. 

\subsection{Deterministic results}
The Fourier transform on $\R^d$ is defined as
\begin{equation*}
\hat f(\xi)\coloneqq(2\pi)^{-d/2}\int_{\R^d}f(x)e^{-ix\cdot\xi}{\rm d}x,
\end{equation*}
which leads to the fractional differentiation operators $|\nabla|^s$ and $\langle\nabla\rangle^s$ defined by
\begin{equation*}
\widehat{|\nabla|^sf}(\xi)\coloneqq|\xi|^s\hat f(\xi),\quad\widehat{\langle\nabla\rangle^sf}(\xi)\coloneqq\langle\xi\rangle^s\hat f(\xi),
\end{equation*}
where $\langle\xi\rangle\coloneqq \sqrt{1+|\xi|^2}.$ The homogeneous and inhomogeneous Sobolev norms can be defined as 
\begin{align*}
\|f\|_{\dot H_x^s(\R^d)}&\coloneqq\||\xi|^s\hat f\|_{L_x^2(\R^d)},\\
\|f\|_{H_x^s(\R^d)}&\coloneqq\|\langle\xi\rangle^s\hat f\|_{L_x^2(\R^d)}.\\
\end{align*}
Let $\varphi\in C_c^\infty(\R^d)$ supported on the ball $|\xi|\leq 2$ and equal to $1$ on the ball $|\xi|\leq 1$. For $N=2^k$, $k\in\Z$, we define the Littlewood-Paley projection operators by
\begin{align*}
\widehat{P_{\leq N}f}(\xi)&\coloneqq\varphi(\xi/N)\hat f(\xi),\\
\widehat{P_{>N}f}(\xi)&\coloneqq(1-\varphi(\xi/N))\hat f(\xi),\\
\widehat{P_{N}f}(\xi)&\coloneqq(\varphi(\xi/N)-\varphi(2\xi/N))\hat f(\xi).
\end{align*}  
These operators also obey the following standard Bernstein estimates.
\begin{lemma}[Bernstein estimates]\label{Bernstein estimates}
For $1\leq p\leq q\leq\infty$ and $s\geq 0$,
\begin{align*}
\||\nabla|^{\pm s}P_Nf\|_{L_x^q(\R^d)}&\sim N^{\pm s}\|P_Nf\|_{L_x^q(\R^d)},\\
\||\nabla|^{s}P_{\leq N}f\|_{L_x^q(\R^d)}&\lesssim N^{s}\|P_{\leq N}f\|_{L_x^q(\R^d)},\\
\|P_{>N}f\|_{L_x^q(\R^d)}&\lesssim N^{-s}\||\nabla|^sP_{>N}f\|_{L_x^q(\R^d)},\\
\|P_{\leq N}f\|_{L_x^q(\R^d)}&\lesssim N^{\left(\frac{d}{p}-\frac{d}{q}\right)}\|P_{\leq N}f\|_{L_x^p(\R^d)}.
\end{align*}
\end{lemma}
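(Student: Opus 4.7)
The plan is to realize each Littlewood--Paley multiplier as convolution with a Schwartz kernel that scales properly in $N$, and then invoke Young's convolution inequality. Write $P_N f = K_N * f$, where $\widehat{K_N}(\xi) = \varphi(\xi/N) - \varphi(2\xi/N)$. A change of variables gives $K_N(x) = N^d K(Nx)$ for a single fixed Schwartz function $K$ whose Fourier transform is supported in the annulus $\{|\xi|\sim 1\}$ and vanishes to infinite order at $\xi=0$. The analogous identities $P_{\leq N} f = \tilde K_N * f$ and $P_{>N} f = (\delta - \tilde K_N) * f$ hold with Schwartz kernels. From dilation one has $\|K_N\|_{L^r(\R^d)} \sim N^{d(1-1/r)}$ for every $1\leq r\leq \infty$, which will supply every scaling factor in the lemma.

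For the first identity, observe that $|\nabla|^{\pm s} P_N$ has Fourier symbol $|\xi|^{\pm s}[\varphi(\xi/N)-\varphi(2\xi/N)]$. Because the bracket forces $|\xi|\sim N$, this symbol is smooth on all of $\R^d$, and scaling out $N^{\pm s}$ writes it as $N^{\pm s}\widehat{K^{(s)}_N}(\xi)$ for a kernel $K^{(s)}_N$ that dilates exactly like $K_N$. Young's inequality with $\|K^{(s)}_N\|_{L^1}\lesssim 1$ then yields $\||\nabla|^{\pm s} P_N f\|_{L^q}\lesssim N^{\pm s}\|P_N f\|_{L^q}$, and applying $|\nabla|^{\mp s}$ to both sides gives the reverse inequality.

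For the second and third estimates I would argue by dyadic decomposition. For $|\nabla|^s P_{\leq N}$ with $s\geq 0$, split $P_{\leq N} = P_{\leq 1}+ \sum_{1<M\leq N}P_M$, apply the first identity on each annular block (each contributing a factor $M^s$), and sum the geometric series in $M\leq N$ to obtain the bound $N^s\|P_{\leq N}f\|_{L^q}$; the potential singularity of $|\xi|^s$ at the origin is harmless because $s\geq 0$. Dually, write $P_{>N}f = \sum_{M>N}P_M f$, invoke $\|P_M f\|_{L^q}\sim M^{-s}\||\nabla|^s P_M f\|_{L^q}$ on each block, and sum $M^{-s}$ over $M>N$ (using almost orthogonality and the triangle inequality) to produce the claimed $N^{-s}$.

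The final (genuine) Bernstein inequality follows immediately from $P_{\leq N} f = \tilde K_N * f$ together with $\|\tilde K_N * f\|_{L^q}\leq \|\tilde K_N\|_{L^r}\|f\|_{L^p}$, where the exponents are chosen so that $\tfrac{1}{r}=1+\tfrac{1}{q}-\tfrac{1}{p}$; the scaling identity $\|\tilde K_N\|_{L^r}\sim N^{d(1-1/r)} = N^{d/p - d/q}$ supplies the stated power of $N$. No substantive obstacles appear in the argument. The only technical point worth flagging is that the symbols $|\xi|^{\pm s}\widehat{K_N}(\xi)$ involved in the first identity must be smooth globally, which is exactly why the annular support of the Littlewood--Paley pieces (keeping us away from the origin of $|\xi|^{\pm s}$) matters.
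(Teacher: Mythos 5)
Your argument is the standard textbook proof of Bernstein's inequalities (dilated Schwartz kernels plus Young's inequality, with dyadic summation for the $P_{\leq N}$ and $P_{>N}$ variants), and it is correct; the paper itself states this lemma as standard and offers no proof, so there is nothing to compare against. The only step worth tightening is the reverse inequality in the first line: since $P_N^2\neq P_N$, you should insert a fattened projection $\widetilde P_N$ with $\widetilde P_N P_N=P_N$ and write $P_Nf=|\nabla|^{\mp s}\widetilde P_N\,|\nabla|^{\pm s}P_Nf$ before applying Young's inequality; likewise note that the $s=0$ endpoints of your geometric sums are trivial rather than genuinely summed.
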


In the following, we present some properties of $\Box_j$, which is defined as in \eqref{Box j}.
\begin{lemma}[Orthogonality {\cite[\emph{Lemma 2.6}]{Shen-Soffer-Wu 1}}]
Let $f\in L_x^2(\R^d)$. Then, we have
\begin{equation*}
\|\Box_jf\|_{L_x^2(\R^d;\ell_{j}^2(\mathbb{N}))}\sim\|f\|_{L_x^2(\R^d)}.
\end{equation*}
\end{lemma}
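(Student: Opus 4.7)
The plan is to reduce the claim to a pointwise estimate on the Fourier side via Plancherel's theorem. Since the $L^2$ norm in $x$ commutes with the $\ell^2$ norm in $j$ (Fubini/Tonelli for nonnegative integrands), the quantity $\|\Box_j f\|_{L^2_x(\R^d;\ell^2_j(\mathbb{N}))}^2$ equals $\sum_j\|\Box_j f\|_{L^2_x}^2$, and by Plancherel each summand is $\int |\psi_j(\xi)|^2 |\hat f(\xi)|^2\, d\xi$. Swapping the sum and the integral, I only need to verify the pointwise equivalence $\sum_{j\in\mathbb{N}} |\psi_j(\xi)|^2 \sim 1$ for almost every $\xi \in \R^d$.

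The upper bound $\sum_j |\psi_j(\xi)|^2 \lesssim 1$ follows from two observations. First, by construction $0\leq \psi_j\leq 1$ and $\psi_j$ is supported in $2Q_j$ (since the denominator $\sum_k \phi_k$ is $\geq 1$ wherever any $\phi_k$ is nonzero, because $\phi_k\equiv 1$ on $Q_k$ and the $Q_k$ tile $\R^d$). Second, the cubes $\{2Q_j\}_{j\in\mathbb{N}}$ have bounded overlap: within a single dyadic shell $Q_N$ the inflated dyadic subcubes of side length $N^{-a}$ overlap only a uniformly bounded number of times, and cubes from neighboring dyadic shells can only touch through the dilation factor $2$, which still yields an absolute bound independent of $N$. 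Hence at every $\xi$ at most $O(1)$ of the $\psi_j$ are nonzero, giving $\sum_j \psi_j^2 \lesssim 1$.

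For the lower bound, I use the defining identity $\sum_j \psi_j(\xi) = 1$ (valid on the set where at least one $\phi_k$ is nonzero, which is almost all of $\R^d$ modulo the measure-zero boundaries of the cubes). Combined with the bounded-overlap property, the Cauchy–Schwarz inequality gives
\begin{equation*}
1 = \Bigl(\sum_j \psi_j(\xi)\Bigr)^2 \leq \#\{j:\psi_j(\xi)\neq 0\} \cdot \sum_j |\psi_j(\xi)|^2 \lesssim \sum_j |\psi_j(\xi)|^2,
\end{equation*}
which produces the desired lower bound. Together the two bounds yield $\sum_j |\psi_j(\xi)|^2 \sim 1$ pointwise.

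Putting everything back, I obtain
\begin{equation*}
\|\Box_j f\|_{L^2_x(\R^d;\ell^2_j)}^2 = \int_{\R^d} \Bigl(\sum_j |\psi_j(\xi)|^2\Bigr) |\hat f(\xi)|^2\, d\xi \sim \|\hat f\|_{L^2}^2 = \|f\|_{L^2_x}^2,
\end{equation*}
which is the claimed equivalence. The only nontrivial step is verifying the uniform bounded-overlap of the inflated cubes $\{2Q_j\}$ across all dyadic scales $N\in 2^{\mathbb{N}}$; this is the part that needs care, because the cubes in $\mathcal{A}(Q_N)$ have very different sizes for different $N$, but since each $Q_j$ sits inside a single dyadic annulus $Q_N$ and the dilation is only by a factor $2$, cross-scale interactions are limited to the boundaries between neighboring annuli and contribute only a bounded number of extra overlaps.
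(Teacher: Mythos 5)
Your proof is correct. The paper does not prove this lemma at all --- it is quoted verbatim from Shen--Soffer--Wu (their Lemma 2.6) --- so there is no in-paper argument to compare against; your Plancherel-plus-pointwise-partition-of-unity reduction is the standard and expected route. Two small remarks. First, the upper bound does not need bounded overlap: since $0\leq\psi_j\leq 1$ and $\sum_j\psi_j(\xi)=1$ (the denominator $\sum_k\phi_k$ is $\geq 1$ \emph{everywhere}, not just almost everywhere, because every $\xi$ lies in some $Q_k$ on which $\phi_k\equiv 1$), one gets $\sum_j\psi_j(\xi)^2\leq\sum_j\psi_j(\xi)=1$ directly. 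Bounded overlap is only genuinely needed for the lower bound via Cauchy--Schwarz, exactly as you use it. Second, your identification of the cross-scale overlap as the only delicate point is right, and your resolution is sound: a doubled cube $2Q_j$ with $Q_j\subset Q_N$ of side $N^{-a}$ protrudes at most $N^{-a}/2$ beyond $Q_j$, which is far smaller than the width of the shells, so only the two adjacent shells can contribute, and within any one shell the disjoint cubes of a fixed side length whose doubles contain a given point number at most $O_d(1)$; the resulting overlap constant depends on $a$ and $d$ only, which is all that is required.
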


\begin{lemma}[$L^q$-$L^p$ estimate {\cite[\emph{Lemma 2.7}]{Shen-Soffer-Wu 1}}]\label{p-q estimate}
Let $a>0$ and $2\leq p\leq q\leq\infty$. Given any $j\in\mathbb{N}$, then 
\begin{equation*}
\|\Box_j f\|_{L_x^q(\R^d)}\lesssim\|\langle\nabla\rangle^{-a(\frac{d}{p}-\frac{d}{q})}\Box_j f\|_{L_x^p(\R^d)}.
\end{equation*}
\end{lemma}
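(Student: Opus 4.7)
The plan is to view this as a Bernstein-type inequality for the ``narrowed'' decomposition, where the small cube side length $N^{-a}$ on the Fourier side is exactly compensated by $s \coloneqq a(d/p-d/q)$ derivatives, since the cubes $Q_j$ sit at frequency $|\xi|\sim N$. More precisely, for $j\neq 0$ the cube $2Q_j$ has side length $\sim N^{-a}$ and is centered at some $\xi_j$ with $|\xi_j|\sim N$ (and $N\geq 2$), so on $\mathrm{supp}\,\psi_j$ we have $\langle\xi\rangle \sim N$. The case $Q_0=O_1$ is trivial because both sides are controlled by $\|\Box_0 f\|_{L^p}$ via standard Bernstein on a cube of unit scale.

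For the main case, I would write $\Box_j f = m_j(D)\,g$, where
\[
g \coloneqq \langle\nabla\rangle^{-s}\Box_j f, \qquad m_j(\xi) \coloneqq \psi_j(\xi)\,\langle\xi\rangle^{s}.
\]
The multiplier $m_j$ is supported in the cube $2Q_j$, smooth at scale $N^{-a}$, and satisfies $|m_j(\xi)|\sim N^{s}$ on its support. Rescaling and translating to a unit cube centered at the origin, one checks
\[
\mathcal{F}^{-1}m_j(x) \;=\; N^{s-ad}\,e^{ix\cdot\xi_j}\,\check{\Phi}\bigl(N^{-a}x\bigr)
\]
for some fixed Schwartz bump $\Phi$, hence
\[
\|\mathcal{F}^{-1}m_j\|_{L_x^r(\R^d)} \;\lesssim\; N^{\,s-ad\,(1-1/r)}.
\]
Applying Young's convolution inequality with $\tfrac{1}{r}+\tfrac{1}{p}=1+\tfrac{1}{q}$, that is $1-\tfrac{1}{r}=\tfrac{1}{p}-\tfrac{1}{q}$, gives
\[
\|\Box_j f\|_{L_x^q} \;\leq\; \|\mathcal{F}^{-1}m_j\|_{L_x^r}\,\|g\|_{L_x^p} \;\lesssim\; N^{\,s-ad(1/p-1/q)}\,\|\langle\nabla\rangle^{-s}\Box_j f\|_{L_x^p},
\]
and the exponent vanishes by the very choice $s=a(d/p-d/q)$.

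The only nontrivial step is the kernel bound, and even that is routine because $\psi_j$ is a \emph{fixed} smooth bump transported by translation-dilation onto $2Q_j$: its derivatives of order $k$ grow like $N^{ak}$, which is exactly the scaling built into the normalization of $\check\Phi$. The factor $\langle\xi\rangle^{s}$ is harmless since it equals $N^{s}$ times a smooth, uniformly bounded function of the rescaled variable on the support (using $|\xi|\sim N$ and $N^{-a}\ll N$, which requires $a\geq 1$, consistent with \eqref{parameter a}). There is no real obstacle; the proof is a one-step Young-type argument once the cube geometry is made explicit, and I would essentially follow \cite[Lemma~2.7]{Shen-Soffer-Wu 1}.
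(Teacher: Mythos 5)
The paper does not prove this lemma at all; it is quoted verbatim from \cite[Lemma 2.7]{Shen-Soffer-Wu 1}, so there is no internal proof to compare against. Your argument is the standard Bernstein-type proof one would expect for that cited lemma, and it is essentially correct: on $\mathrm{supp}\,\psi_j\subset 2Q_j$ one has $\langle\xi\rangle\sim N$, the multiplier is a bump of height $\sim N^{s}$ living on a cube of side $\sim N^{-a}$, the kernel bound $\|\mathcal{F}^{-1}m_j\|_{L^r}\lesssim N^{s-ad(1-1/r)}$ follows by rescaling, and Young's inequality with $1-\tfrac1r=\tfrac1p-\tfrac1q$ makes the exponent vanish precisely because $s=a(\tfrac dp-\tfrac dq)$; the estimate on $\partial_\eta\langle\xi_j+N^{-a}\eta\rangle^{s}$ confirming that $\langle\xi\rangle^{s}/N^{s}$ has uniformly bounded derivatives in the rescaled variable is also right.

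One cosmetic correction: with your choice $m_j(\xi)=\psi_j(\xi)\langle\xi\rangle^{s}$ and $g=\langle\nabla\rangle^{-s}\Box_jf$ you get $\mathcal{F}(m_j(D)g)=\psi_j^2\langle\xi\rangle^{-s}\langle\xi\rangle^{s}\hat f=\psi_j^2\hat f$, not $\psi_j\hat f$, because $\psi_j$ is a partition-of-unity piece and is not identically $1$ on its own support. The standard fix is to take $m_j(\xi)=\tilde\psi_j(\xi)\langle\xi\rangle^{s}$ with $\tilde\psi_j$ a fattened bump equal to $1$ on $2Q_j\supset\mathrm{supp}\,\widehat{g}$ and supported in, say, $4Q_j$; all your kernel bounds go through unchanged for $\tilde\psi_j$. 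With that adjustment (and the unit-cube case $O_1$ handled as you indicate) the proof is complete.
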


Next, we review some well-known lemmas that will be used later in the proof of Proposition \ref{morawetz NLS-hartree-w}.
\begin{lemma}[{\cite[\emph{Theorem 5.9}]{Lieb-Loss}}]\label{Lieb-Loss estimate}
For $d\geq 4$, $t\in I\subset\R$, $x\in\R^d$ and $y\in\R^d$, we have
\begin{equation*}
\big\||\nabla|^{-\frac{d-3}{2}}(|u|^2)\big\|^2_{L_t^2(I;L_x^2(\R^d))}\simeq\int_I\iint_{\R^d\times\R^d}\frac{|u(t,x)|^2|u(t,y)|^2}{|x-y|^3}{\rm d}x{\rm d}y{\rm d}t. 
\end{equation*}
In particular, for $d=5$, 
\begin{equation}
\big\||\nabla|^{-1}(|u|^2)\big\|^2_{L_t^2(I;L_x^2(\R^5))}\simeq\int_I\iint_{\R^5\times\R^5}\frac{|u(t,x)|^2|u(t,y)|^2}{|x-y|^3}{\rm d}x{\rm d}y{\rm d}t. 
\end{equation}
\end{lemma}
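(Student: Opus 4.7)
The plan is to prove this Plancherel-type identity by reducing the $L_x^2$ norm of the fractional integral to a Fourier multiplier expression, and then re-expressing that expression as a non-local double integral via the Fourier representation of the Riesz kernel. Since both sides of the claimed equivalence are integrated in time only through $\int_I \cdot\, dt$ on the outside, it suffices to establish the spatial identity pointwise in $t$ and integrate.

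First, I will apply Plancherel to the left-hand side with $g(x) = |u(t,x)|^2$ (which is real and non-negative, hence equal to its own conjugate) to obtain
\begin{equation*}
\bigl\||\nabla|^{-\tfrac{d-3}{2}} g\bigr\|_{L_x^2(\R^d)}^2
= \int_{\R^d} |\xi|^{-(d-3)} |\hat g(\xi)|^2 \, d\xi.
\end{equation*}
Next, I will invoke the classical Fourier transform formula for Riesz potentials: for $0 < \alpha < d$, the tempered distribution $|x|^{-\alpha}$ satisfies $\widehat{|x|^{-\alpha}}(\xi) = c_{d,\alpha}\, |\xi|^{-(d-\alpha)}$ for an explicit positive constant $c_{d,\alpha}$. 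Taking $\alpha = 3$, which is admissible precisely because $d \geq 4$, this reads $\widehat{|x|^{-3}}(\xi) = c_{d,3}\, |\xi|^{-(d-3)}$.

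Using Parseval's theorem (viewing $|\hat g|^2$ as the Fourier transform of $g * \tilde g$ with $\tilde g(x) = \overline{g(-x)}$), I will rewrite the multiplier expression as a double integral against the kernel $|x-y|^{-3}$:
\begin{equation*}
\int_{\R^d} |\xi|^{-(d-3)} |\hat g(\xi)|^2 \, d\xi
= c_{d,3}^{-1} \iint_{\R^d\times\R^d} \frac{g(x)\, \overline{g(y)}}{|x-y|^3}\, dx\, dy.
\end{equation*}
Substituting $g(x) = |u(t,x)|^2$ and integrating in $t \in I$ yields the asserted equivalence
\begin{equation*}
\bigl\||\nabla|^{-\tfrac{d-3}{2}}(|u|^2)\bigr\|_{L_t^2(I;L_x^2)}^2
\simeq \int_I \iint_{\R^d\times\R^d} \frac{|u(t,x)|^2 |u(t,y)|^2}{|x-y|^3}\, dx\, dy\, dt,
\end{equation*}
and specializing $d=5$ gives the displayed corollary.

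There is essentially no combinatorial or analytic obstacle here: the only subtle point is making sure the Riesz potential identity and Parseval step are applied in the distributional sense (or for Schwartz class and then extended by density to $|u|^2 \in L^1 \cap L^2$ as far as is needed), and that the constraint $0 < 3 < d$ of the Riesz transform identity is respected, which is exactly the role of the hypothesis $d \geq 4$.
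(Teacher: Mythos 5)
Your argument is correct: the paper gives no proof of this lemma, simply citing Lieb--Loss Theorem 5.9, and your Plancherel plus Riesz-potential computation (using $\widehat{|x|^{-3}} = c_{d,3}|\xi|^{-(d-3)}$, valid exactly when $0<3<d$, i.e.\ $d\geq 4$) is precisely the standard proof of that cited result. The density/distributional caveat you flag for $g=|u(t,\cdot)|^2$ is the only technical point, and you handle it appropriately.
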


\begin{lemma}[{\cite[\emph{Lemma 5.6}]{Visan}}]\label{Visan estimate}
Let $d\geq 1$, we have
\begin{equation*}
\big\||\nabla|^{-\frac{d-3}{4}}f\big\|_{L_x^4(\R^d)}^2\lesssim\||\nabla|^{-\frac{d-3}{2}}|f|^2\|_{L_x^2(\R^d)}.
\end{equation*}
In particular, for $d=5$, 
\begin{equation}
\big\||\nabla|^{-\frac{1}{2}}f\big\|_{L_x^4(\R^5)}^2\lesssim\||\nabla|^{-1}|f|^2\|_{L_x^2(\R^5)}.
\end{equation}  
\end{lemma}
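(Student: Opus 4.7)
I would prove this via Plancherel's identity combined with a bilinear Fourier analysis. First, note that by Plancherel together with the Riesz potential identity, the right-hand side unfolds as
\[
\||\nabla|^{-(d-3)/2}|f|^2\|_{L^2}^2 = \langle |\nabla|^{-(d-3)}|f|^2, |f|^2\rangle = c_d \iint_{\R^d\times\R^d} \frac{|f(x)|^2|f(y)|^2}{|x-y|^3}\,dx dy,
\]
using $d - 4\cdot \frac{d-3}{4} = 3$. For the left-hand side, set $s = \tfrac{d-3}{4}$ and $g=|\nabla|^{-s}f$; then $\|g\|_{L^4}^2 = \||g|^2\|_{L^2}$, and it suffices to show $\||g|^2\|_{L^2}^2$ is controlled by the same quantity.

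On the Fourier side, the two objects have representations
\[
\widehat{|g|^2}(\xi) = \int_{\R^d} |\eta|^{-s}|\eta-\xi|^{-s}\hat f(\eta)\overline{\hat f(\eta-\xi)}\,d\eta, \qquad \widehat{|\nabla|^{-2s}|f|^2}(\xi) = |\xi|^{-2s}\int_{\R^d}\hat f(\eta)\overline{\hat f(\eta-\xi)}\,d\eta,
\]
so the inequality reduces to a bilinear Fourier multiplier comparison between the symbol $|\eta|^{-s}|\eta-\xi|^{-s}$ and the ``diagonal'' symbol $|\xi|^{-2s}$. I would then split the $\eta$-integration via a Littlewood-Paley decomposition into three regimes: (i) the balanced region $|\eta|\sim|\eta-\xi|\gtrsim |\xi|$, where one has the pointwise domination $|\eta|^{-s}|\eta-\xi|^{-s}\lesssim |\xi|^{-2s}$; (ii) the low-frequency region $|\eta|\ll|\xi|$ (so $|\eta-\xi|\sim|\xi|$); and (iii) the symmetric region $|\eta-\xi|\ll|\xi|$. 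Region (i) is bounded directly by the RHS integrand.

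Regions (ii) and (iii) are the source of the main obstacle. There the symbol $|\eta|^{-s}|\eta-\xi|^{-s}$ is no longer pointwise dominated by $|\xi|^{-2s}$ (it blows up as $|\eta|\to 0$), so one cannot hope for a pointwise comparison of Fourier integrands. Instead, the bound must come from exploiting the $L^2$-integration in $\xi$: decompose both $f$ and $|f|^2$ via Littlewood-Paley projectors $P_M$, write each off-diagonal piece as a paraproduct of the type $|\nabla|^{-s}P_{\ll |\xi|}f \cdot \overline{|\nabla|^{-s}P_{\sim|\xi|}f}$, and bound it via Bernstein's inequality combined with almost-orthogonality of distinct frequency annuli. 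After summing a geometric series in the frequency scales, the paraproduct pieces are also absorbed by the Riesz-kernel expression on the right. This paraproduct/orthogonality analysis of the off-diagonal region is the technical heart of the argument; in particular, it is where the specific choice $s=\tfrac{d-3}{4}$ (matching the $|x-y|^{-3}$ weight arising on the RHS) enters in a decisive way to close the geometric sums.
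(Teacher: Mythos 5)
The paper does not actually prove this lemma; it is quoted verbatim from Visan \cite[Lemma 5.6]{Visan}, so your argument has to stand on its own. It does not: the gap sits exactly where you defer the work, and in fact it already appears in your region (i). Pointwise domination of the symbol, $|\eta|^{-s}|\eta-\xi|^{-s}\lesssim|\xi|^{-2s}$, only yields
\begin{equation*}
\Big|\int_{(i)}\frac{\hat f(\eta)\overline{\hat f(\eta-\xi)}}{|\eta|^{s}|\eta-\xi|^{s}}\,d\eta\Big|
\le |\xi|^{-2s}\int_{(i)}|\hat f(\eta)|\,|\hat f(\eta-\xi)|\,d\eta,
\end{equation*}
whereas the right-hand side of the lemma carries $|\xi|^{-2s}\bigl|\int\hat f(\eta)\overline{\hat f(\eta-\xi)}\,d\eta\bigr|$ with the modulus \emph{outside} the $\eta$-integral; $\widehat{|f|^2}(\xi)$ has cancellation, and $\int|\hat f(\eta)||\hat f(\eta-\xi)|\,d\eta$ can be far larger than $|\widehat{|f|^2}(\xi)|$. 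So even the ``easy'' region is not bounded by the RHS integrand. The off-diagonal regions fail in a quantifiable, termwise way: for $M\ll N$ the piece $|\nabla|^{-s}P_Mf\cdot\overline{|\nabla|^{-s}P_Nf}$ has $L^2$ norm $\sim M^{-s}N^{-s}\|P_Mf\,\overline{P_Nf}\|_{L^2}$, while $P_Mf\,\overline{P_Nf}$ has output frequency $\sim N$, so its contribution to the right-hand side is only $\sim N^{-2s}\|P_Mf\,\overline{P_Nf}\|_{L^2}$; the ratio $(N/M)^{s}$ is unbounded, so no Bernstein/almost-orthogonality/geometric-series argument can absorb these pieces into the right-hand side one frequency pair at a time, contrary to what you assert.

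What rescues the inequality is positivity in \emph{physical} space, which the bilinear Fourier-multiplier framework discards. A correct proof (in essence the one in the cited reference) goes as follows: write $|\nabla|^{-2s}=\frac{1}{\Gamma(s)}\int_0^\infty t^{s-1}e^{t\Delta}\,dt$ and use the Cauchy--Schwarz (Jensen) inequality for the positive, unit-mass heat kernel, $e^{t\Delta}(|f|^2)(x)\ge |e^{t\Delta}f(x)|^2$, to obtain the pointwise lower bound $|\nabla|^{-2s}(|f|^2)(x)\gtrsim \int_0^\infty t^{s-1}|e^{t\Delta}f(x)|^2\,dt=(Sf)^2(x)$. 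Taking $L^2_x$ norms, the right-hand side of the lemma dominates $\|Sf\|_{L^4_x}^2$, and the square-function estimate $\||\nabla|^{-s}f\|_{L^4_x}\lesssim\|Sf\|_{L^4_x}$ (standard Littlewood--Paley theory) finishes the proof. The essential structural input---that the right-hand side involves the genuine square $|f|^2$ tested against a positive kernel---is exactly what your absolute-value comparison of Fourier integrands cannot see; any repair of your argument must reintroduce this positivity (e.g.\ by first reducing to $f\ge 0$, which is legitimate since $|\nabla|^{-s}$ has a positive kernel, and then arguing in physical space as above).
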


\begin{lemma}[Gagliardo-Nirenberg inequality {\cite[\emph{Lemma 2.3}]{Cardoso-Guzman-Pastor 2022}}]
Let $d\geq 1$, $0<\theta<1$, $0<s_1<s_2$ and $1<p_1,p_2,p_3\leq\infty.$ Suppose that $\frac{1}{p_1}=\frac{\theta}{p_2}+\frac{1-\theta}{p_3}$ and $s_1=\theta s_2$. Then, we have
\begin{equation}\label{G-N}
\||\nabla|^{s_1}u\|_{L_x^{p_1}(\R^d)}
\lesssim
\||\nabla|^{s_2}u\|_{L_x^{p_2}(\R^d)}^\theta\|u\|_{L_x^{p_3}(\R^d)}^{1-\theta}.
\end{equation}
\end{lemma}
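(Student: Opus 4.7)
The approach is via Littlewood--Paley theory, combining the square-function characterization of $|\nabla|^{s}$ with a two-step Hölder argument: one step applied to the dyadic frequency sum and one applied to the spatial integration. A quick scaling check against $u_{\lambda}(x)=u(\lambda x)$ using $s_{1}=\theta s_{2}$ together with $\tfrac{1}{p_{1}}=\tfrac{\theta}{p_{2}}+\tfrac{1-\theta}{p_{3}}$ shows that the desired estimate is scale-invariant, so a dyadic decomposition is the natural tool.

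The key algebraic identity, coming from $s_{1}=\theta s_{2}$, is the pointwise factorization
\[
N^{2s_{1}}|P_{N}u|^{2}=\bigl(N^{2s_{2}}|P_{N}u|^{2}\bigr)^{\theta}\,\bigl(|P_{N}u|^{2}\bigr)^{1-\theta}.
\]
First I would invoke the Littlewood--Paley equivalence
\[
\||\nabla|^{s_{1}}u\|_{L_{x}^{p_{1}}}\sim\Bigl\|\Bigl(\sum_{N\in 2^{\Z}}N^{2s_{1}}|P_{N}u|^{2}\Bigr)^{1/2}\Bigr\|_{L_{x}^{p_{1}}},
\]
valid in the range $1<p_{1}<\infty$. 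Applying Hölder's inequality in the $N$-sum with conjugate exponents $(1/\theta,1/(1-\theta))$ and taking square roots yields the pointwise bound
\[
\Bigl(\sum_{N}N^{2s_{1}}|P_{N}u|^{2}\Bigr)^{1/2}\leq F^{\theta}G^{1-\theta},\quad F:=\Bigl(\sum_{N}N^{2s_{2}}|P_{N}u|^{2}\Bigr)^{1/2},\ G:=\Bigl(\sum_{N}|P_{N}u|^{2}\Bigr)^{1/2}.
\]
Second, Hölder in space with the dual pair $(p_{2}/\theta,p_{3}/(1-\theta))$, whose reciprocals sum to $1/p_{1}$ by hypothesis, gives
\[
\|F^{\theta}G^{1-\theta}\|_{L_{x}^{p_{1}}}\leq\|F\|_{L_{x}^{p_{2}}}^{\theta}\|G\|_{L_{x}^{p_{3}}}^{1-\theta}\sim\||\nabla|^{s_{2}}u\|_{L_{x}^{p_{2}}}^{\theta}\|u\|_{L_{x}^{p_{3}}}^{1-\theta},
\]
which is exactly \eqref{G-N} after invoking the square-function equivalence one more time on each factor.

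The main obstacle is the endpoint case in which some $p_{i}$ equals $\infty$, since the square-function characterization fails in $L^{\infty}$. For $p_{2}=\infty$ or $p_{3}=\infty$ with $p_{1}<\infty$, one bypasses the square function by a direct dyadic split at a threshold $N_{0}$: bound the low-frequency piece $P_{\leq N_{0}}u$ by Bernstein against $\|u\|_{L_{x}^{p_{3}}}$, bound the high-frequency piece $P_{>N_{0}}u$ by Bernstein against $\||\nabla|^{s_{2}}u\|_{L_{x}^{p_{2}}}$, and optimize $N_{0}$ so that the two contributions balance at the prescribed power $\theta$. The scaling relation forces $p_{1}=\infty$ only when $p_{2}=p_{3}=\infty$, and the same split-and-optimize procedure handles that corner as well. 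This dyadic-splitting argument is in fact the uniform approach taken in the cited reference and avoids the $L^{\infty}$ failure of the square function altogether.
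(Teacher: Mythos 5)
The paper does not actually prove this lemma: it is quoted, with a citation, from \cite{Cardoso-Guzman-Pastor 2022}, so there is no internal proof to compare yours against. Your core argument for $1<p_1,p_2,p_3<\infty$ is correct and is the standard one: the Littlewood--Paley square-function characterization of $\||\nabla|^{s}u\|_{L^{p}}$, discrete H\"older in the frequency sum exploiting $s_1=\theta s_2$, and H\"older in space exploiting $\tfrac{1}{p_1}=\tfrac{\theta}{p_2}+\tfrac{1-\theta}{p_3}$. This already covers every instance in which the lemma is invoked in the paper (the bound $\|w\|_{L_x^3}\lesssim\|\nabla w\|_{L_x^2}^{1/3}\big\||\nabla|^{-1/2}w\big\|_{L_x^4}^{2/3}$ in the Morawetz section and the analogous estimate in Remark \ref{d>5 cannot}), since all exponents there are finite.

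Your treatment of the endpoint cases, however, does not work as written. Bernstein's inequality $\|P_Nf\|_{L^q}\lesssim N^{d(1/p-1/q)}\|P_Nf\|_{L^p}$ only passes from a \emph{lower} to a \emph{higher} Lebesgue exponent, so when $p_3=\infty$ (resp.\ $p_2=\infty$) while $p_1<\infty$ you cannot ``bound the low-frequency piece by Bernstein against $\|u\|_{L^{p_3}}$'' (resp.\ the high-frequency piece against $\||\nabla|^{s_2}u\|_{L^{p_2}}$): that would require descending from $L^\infty$ to $L^{p_1}$. The split-and-optimize argument does close the corner $p_1=p_2=p_3=\infty$, where every estimate stays in $L^\infty$, but the mixed cases $p_1<\infty$ with $p_2=\infty$ or $p_3=\infty$ need a genuinely different mechanism --- for instance the chain $\dot W^{s_2,p_2}\hookrightarrow\dot B^{s_2}_{p_2,\infty}$, $L^{p_3}\hookrightarrow\dot B^{0}_{p_3,\infty}$, followed by real interpolation $(\dot B^{0}_{p_3,\infty},\dot B^{s_2}_{p_2,\infty})_{\theta,1}\hookrightarrow\dot B^{s_1}_{p_1,1}\hookrightarrow\dot W^{s_1,p_1}$ --- which is how the full range $1<p_i\leq\infty$ is usually obtained. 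Since the paper never applies the lemma with an infinite exponent, this gap is harmless downstream, but as stated your proof establishes the lemma only for finite exponents.
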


\begin{lemma}[Hardy's inequality {\cite[\emph{Theorem 9.5}]{Muscalu-Schlag}}]
For $0<s<\frac{d}{2}$, we have that
\begin{equation}\label{Hardy}
\||x|^{-s}u\|_{L_x^2(\R^d)}\lesssim\|u\|_{\dot H_x^s(\R^d)}.
\end{equation}
\end{lemma}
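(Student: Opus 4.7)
The plan is to reduce Hardy's inequality to the $L^2$-boundedness of a Riesz potential operator with a power weight, and then apply the Stein-Weiss weighted Hardy-Littlewood-Sobolev inequality. The hypothesis $0 < s < d/2$ will turn out to be exactly what is needed for the weighted HLS parameters to be admissible.

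First, I would set $v := |\nabla|^s u$, so that $\|v\|_{L_x^2(\R^d)} = \|u\|_{\dot H_x^s(\R^d)}$ and, by the Riesz potential representation valid for $0 < s < d$,
\[
u(x) = |\nabla|^{-s} v(x) = c_{d,s} \int_{\R^d} \frac{v(y)}{|x-y|^{d-s}} \, dy.
\]
Consequently \eqref{Hardy} is equivalent to the $L_x^2(\R^d)$-boundedness of the integral operator
\[
T v(x) := c_{d,s} \int_{\R^d} \frac{v(y)}{|x|^s \, |x-y|^{d-s}} \, dy,
\]
with kernel $K(x,y) = c_{d,s} |x|^{-s} |x-y|^{s-d}$, which is homogeneous of degree $-d$ in $(x,y)$, exactly the homogeneity needed for $L^2 \to L^2$ mapping.

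Second, I would invoke the Stein-Weiss weighted Hardy-Littlewood-Sobolev inequality: for $0 \leq \alpha, \beta < d/2$ with $\alpha + \beta + \lambda = d$ and $0 < \lambda < d$, the integral operator with kernel $|x|^{-\alpha}\,|x-y|^{-\lambda}\,|y|^{-\beta}$ is bounded on $L_x^2(\R^d)$. Choosing $\alpha = s$, $\beta = 0$, and $\lambda = d - s$, the parameter constraints collapse to $0 < s < d/2$, which is precisely our hypothesis. This yields $\|Tv\|_{L_x^2} \lesssim \|v\|_{L_x^2}$, hence \eqref{Hardy} for Schwartz $u$; the general case follows by a standard density argument.

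The main technical step is the verification of the Stein-Weiss hypotheses. If one prefers to avoid invoking Stein-Weiss as a black box, an equivalent route is a direct Schur-type argument: by the scale invariance of $K$, testing against the dilation-invariant weight $|y|^{-d/2}$ reduces the boundedness to the finiteness of a single integral $\int_{\R^d} |e_1|^{-s}|e_1-y|^{s-d}|y|^{-d/2}\, dy$ (and its symmetric counterpart), whose convergence at $0$, $\infty$, and $y = e_1$ is controlled precisely by $0 < s < d/2$. Either route delivers the claim, and no genuine obstacle is anticipated, since the condition $s < d/2$ that appears in the hypothesis is exactly the borderline regularity at which $|x|^{-s}$ ceases to act as an admissible weight.
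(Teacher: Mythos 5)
The paper does not actually prove this lemma: it is quoted verbatim as Theorem~9.5 of Muscalu--Schlag and used as a black box, so there is no in-paper argument to compare against. Your proposal is a correct and standard self-contained derivation. The reduction $v=|\nabla|^s u$, $u=|\nabla|^{-s}v$ is legitimate for $0<s<d$ (work first with $u$ Schwartz with $\hat u$ vanishing near the origin, a dense class in $\dot H^s$ for $s<d/2$), and the resulting kernel $K(x,y)=c_{d,s}|x|^{-s}|x-y|^{s-d}$ is nonnegative and homogeneous of degree $-d$, so both of your routes apply. The Stein--Weiss bookkeeping is right: with $\alpha=s$, $\beta=0$, $\lambda=d-s$ one needs $\alpha+\beta+\lambda=d$, $\alpha<d/2$, $\beta<d/2$, $\lambda>0$, which is exactly $0<s<d/2$. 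The Schur test with weight $|y|^{-d/2}$ also checks out: the integral $\int|e_1-y|^{s-d}|y|^{-d/2}\,{\rm d}y$ converges at $0$ since $d/2<d$, at $e_1$ since $s>0$, and at infinity since $s<d/2$; the symmetric integral $\int|x|^{-s-d/2}|x-y|^{s-d}\,{\rm d}x$ converges at the origin precisely when $s<d/2$, which is where the hypothesis enters sharply, consistent with your closing remark. Either route is fine; the only point worth flagging is that the Riesz potential inversion and the density step deserve one explicit sentence in a written-up version, but there is no gap.
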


\begin{lemma}[Hardy-Littlewood-Sobolev inequality {\cite[\emph{Theorem 1.7}]{Bahouri-Chemin-Danchin}}]\label{Hardy-Littlewood Sobolev inequality}
Let $1<p<q<\infty$, $f\in L_x^p(\R^5)$. Then under the assumption that
\begin{equation*}
\frac{1}{q}=\frac{1}{p}-\frac{1}{5},
\end{equation*}
there exists a positive constant $C$ such that
\begin{equation}\label{Hardy-Littlewood Sobolev inequality formula}
\big\||\cdot|^{-4}\ast f\big\|_{L_x^q(\R^5)}\leq C\|f\|_{L_x^p(\R^5)}.
\end{equation}
\end{lemma}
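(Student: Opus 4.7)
\emph{Proof proposal.} The statement is the classical Hardy-Littlewood-Sobolev inequality specialized to dimension $d=5$ with Riesz exponent $\alpha=1$: one has $|x|^{-4}=|x|^{\alpha-d}$, and the scaling constraint $\frac{1}{q}=\frac{1}{p}-\frac{1}{5}$ is exactly the Sobolev scaling $\frac{1}{q}=\frac{1}{p}-\frac{\alpha}{d}$ associated to the Riesz potential $I_1$. Since this is textbook material, I would simply invoke \cite{Bahouri-Chemin-Danchin} for the full statement, but for the sake of completeness I sketch the standard argument, which proceeds by reducing to a pointwise bound involving the Hardy-Littlewood maximal function $Mf$.

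First I would split the kernel at an as-yet-unspecified radius $R>0$, writing $|x|^{-4}=|x|^{-4}\mathbf{1}_{\{|x|\leq R\}}+|x|^{-4}\mathbf{1}_{\{|x|>R\}}$. A dyadic decomposition of $B_R(x)$ into annuli $\{2^{-k-1}R<|x-y|\leq 2^{-k}R\}_{k\geq 0}$ combined with the definition of $Mf$ yields the pointwise estimate $\int_{|x-y|\leq R}\frac{|f(y)|}{|x-y|^{4}}\,{\rm d}y\lesssim R\cdot Mf(x)$. For the far piece, H\"older's inequality gives $\int_{|x-y|>R}\frac{|f(y)|}{|x-y|^{4}}\,{\rm d}y\leq\bigl\||\cdot|^{-4}\mathbf{1}_{|\cdot|>R}\bigr\|_{L^{p'}(\R^5)}\|f\|_{L^p(\R^5)}\lesssim R^{1-5/p}\|f\|_{L^p(\R^5)}$, with the tail of the kernel lying in $L^{p'}(\R^5)$ precisely because $p<5$.

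The key step is then to optimize in $R$: choosing $R=(\|f\|_{L^p}/Mf(x))^{p/5}$ and invoking the identity $\frac{p}{q}=1-\frac{p}{5}$ (which follows from $\frac{1}{q}=\frac{1}{p}-\frac{1}{5}$) produces the pointwise bound
\[
\bigl|(|\cdot|^{-4}\ast f)(x)\bigr|\lesssim\|f\|_{L^p(\R^5)}^{\,1-p/q}\,(Mf(x))^{p/q}.
\]
Taking $L^q_x$-norms of both sides and applying the Hardy-Littlewood maximal inequality $\|Mf\|_{L^p(\R^5)}\lesssim\|f\|_{L^p(\R^5)}$, which is valid since $p>1$, closes the estimate.

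The main delicacy lies in bookkeeping the scaling exponents so that they assemble precisely into the stated relation; the hypotheses $1<p<q<\infty$ together with $\frac{1}{q}=\frac{1}{p}-\frac{1}{5}$ force $1<p<5$, which is exactly the range in which both the tail of the kernel lies in $L^{p'}$ and the Hardy-Littlewood maximal operator is bounded on $L^p$. An alternative would be to prove the weak-type $(p,q)$ endpoint bound first and then interpolate, but the optimization argument above is cleaner and avoids Marcinkiewicz interpolation entirely.
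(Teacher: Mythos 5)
Your proposal is correct: the paper itself gives no proof of this lemma, simply citing \cite{Bahouri-Chemin-Danchin}, which is also your primary move, and your supplementary sketch of the standard maximal-function argument is accurate. The exponent bookkeeping all checks out --- the near-part bound $R\,Mf(x)$, the far-part bound $R^{1-5/p}\|f\|_{L^p}$ (valid since $q<\infty$ forces $p<5$, so the kernel tail lies in $L^{p'}$), and the optimized pointwise bound $\|f\|_{L^p}^{1-p/q}(Mf(x))^{p/q}$ combined with the $L^p$-boundedness of $M$ for $p>1$ give exactly \eqref{Hardy-Littlewood Sobolev inequality formula}.
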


For later use, we recall the classical Strichartz estimate.
\begin{definition}
A pair of Lebesgue space exponents $(q,r)$ are called $\alpha$-admissible for $\R^{1+d}$, or denote by $(q,r)\in\Lambda_\alpha$ if 
\begin{equation*}
d\left(\frac{1}{2}-\frac{1}{r}\right)-\frac{2}{q}=\alpha.
\end{equation*}
In particular, $(q,r)$ is a Schr\"{o}dinger admissible pair when $\alpha=0.$ If $(q,r)\in\Lambda_\alpha$ and $\alpha\geq 0$, we define that
\begin{equation*}
(q,r)\in\cup_{\alpha\geq 0}\Lambda_{\alpha}.
\end{equation*}
\end{definition}

\begin{lemma}[\cite{{Cazenave},{Keel-Tao}} ]\label{classical Strichatz}
Let $(q,r)$ and $(\tilde q,\tilde r)$ be two arbitrary Schr\"{o}dinger admissible pairs, $I\subset\R$ be a time interval and $0\in I$. Suppose $u$ is a solution to
\begin{equation*}
\begin{cases}
i\pa_t u+\Delta u=G(t,x),\ (t,x)\in I\times\R^d,\\
u(0,x)=f(x)\in L_x^2(\R^d).
\end{cases}   
\end{equation*}
Then, there exists $C_0>0$ such that
\begin{equation}
\|u\|_{L_t^q(I;L_x^r(\R^d))}
\leq
C_0\left(\|f\|_{L_x^2(\R^d)}+\|G\|_{L_t^{\tilde q'}(I;L_x^{\tilde r'}(\R^d))}\right),
\end{equation}
where the primed exponents denote H\"{o}lder dual exponents.
\end{lemma}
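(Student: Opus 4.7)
The plan is to reduce to the standard Strichartz machinery built from the dispersive bound and a duality argument. By Duhamel's formula, $u(t) = e^{it\Delta}f - i\int_0^t e^{i(t-s)\Delta}G(s)\,ds$, so it suffices to prove the homogeneous estimate $\|e^{it\Delta}f\|_{L^q_tL^r_x} \lesssim \|f\|_{L^2_x}$ and the inhomogeneous (retarded) estimate $\|\int_0^t e^{i(t-s)\Delta}G(s)\,ds\|_{L^q_tL^r_x} \lesssim \|G\|_{L^{\tilde q'}_tL^{\tilde r'}_x}$ separately.

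First I would record the two building blocks: the mass conservation $\|e^{it\Delta}f\|_{L^2_x} = \|f\|_{L^2_x}$ and the dispersive estimate $\|e^{it\Delta}f\|_{L^\infty_x} \lesssim |t|^{-d/2}\|f\|_{L^1_x}$, obtained from the explicit kernel of the free propagator. Interpolating yields $\|e^{it\Delta}f\|_{L^r_x} \lesssim |t|^{-d(1/2-1/r)}\|f\|_{L^{r'}_x}$ for $2\le r \le \infty$. For non-endpoint admissible $(q,r)$, I would then run the $TT^*$ argument: the homogeneous estimate is equivalent to boundedness of the operator $TT^*G(t) = \int e^{i(t-s)\Delta}G(s)\,ds$ from $L^{q'}_tL^{r'}_x$ to $L^q_tL^r_x$, and by the Minkowski and Hardy-Littlewood-Sobolev inequalities (using $2/q = d(1/2-1/r)$ so that the kernel $|t-s|^{-d(1/2-1/r)}$ is exactly of fractional integration type) this reduces to $2/q < 1$, i.e.\ the non-endpoint range. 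The bilinear form $|\langle TT^*G, H\rangle|$ gives both the homogeneous inequality and the inhomogeneous inequality for matching dual pairs, and different admissible pairs $(q,r)$ and $(\tilde q, \tilde r)$ are handled by polarizing this argument.

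The retarded inhomogeneous estimate differs from the full (non-retarded) one by the cutoff $s<t$, and to pass from the latter to the former I would invoke the Christ-Kiselev lemma, whose hypothesis $\tilde q' < q$ is automatic away from the double endpoint. The main obstacle is precisely the endpoint case $q=2$ (which requires $d\ge 3$ and $r = 2d/(d-2)$), where Hardy-Littlewood-Sobolev is borderline and Christ-Kiselev fails. Here I would follow Keel-Tao: perform a dyadic decomposition $|t-s|\sim 2^j$, prove a bilinear estimate for each piece by combining the dispersive and energy bounds through real interpolation on a Lorentz scale, and sum in $j$ using an atomic/orthogonality argument to gain the critical logarithm back. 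With the endpoint homogeneous estimate in hand, the retarded endpoint inhomogeneous estimate follows by the same bilinear scheme applied directly to the truncated kernel $\mathbf{1}_{s<t}$, bypassing Christ-Kiselev. Mixing any $(q,r), (\tilde q,\tilde r) \in \Lambda_0$ in the final statement then follows by combining the homogeneous estimate at $(q,r)$ with the dual homogeneous estimate at $(\tilde q,\tilde r)$ via Duhamel.
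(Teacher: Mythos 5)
The paper does not prove this lemma; it is quoted directly from the cited references (Cazenave for the non-endpoint range, Keel--Tao for the endpoint $q=2$). Your outline is precisely the standard argument contained in those references — Duhamel, dispersive estimate plus $TT^*$ and Hardy--Littlewood--Sobolev off the endpoint, Christ--Kiselev for mixing non-dual retarded pairs, and the Keel--Tao dyadic bilinear interpolation at the endpoint — and it is correct.
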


\subsection{Probabilistic results}
In addition to the above deterministic estimates, probabilistic estimates also need to be introduced. Firstly, we recall the large deviation estimate.

\begin{lemma}[Large deviation estimate {\cite[\emph{Lemma 3.1}]{Burq-Tzvetkov1}}]\label{L-D estimate1}
Let $\{h_j(\om)\}_{j=1}^\infty$ be a sequence of real, $0$-mean, independent random variables with associated sequence of distributions $(\mu_j)_{j=1}^\infty$ on a probability space $(\Omega,\mathcal{A},\mathbb{P})$. Assume that there exists $c>0$ such that
\begin{align*}
\forall\gamma\in\R,\forall j\geq 1,
\left|\int_{-\infty}^\infty e^{\gamma x}{\rm d}\mu_j(x)\right|
\leq e^{c\gamma^2}.
\end{align*}
Then there exists $\beta>0$ such that for every $\lambda>0$, every sequence $\{c_j\}_{j=1}^\infty\in\ell^2$ of real numbers,
\begin{align*}
\mathbb{P}\left(\om:\left|\sum_{j=1}^\infty c_j h_j(\om)\right|>\lambda\right)
\leq 2e^{-\frac{\beta\lambda^2}{\sum_n c_j^2}}.
\end{align*}
As a consequence for every $p\geq 2$, $\{c_j\}_{j=1}^\infty\in\ell^2$, we have
\begin{align*}
\left\|\sum_{j=1}^\infty c_jh_j(\om)\right\|_{L_\omega^p(\Om)}
\lesssim
\sqrt{p}\left(\sum_{j=1}^\infty c_j^2\right)^{1/2}.
\end{align*}
\end{lemma}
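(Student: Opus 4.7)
The result is the classical Hoeffding-type subgaussian concentration inequality. My plan is to prove the tail bound first by the Chernoff (exponential moment) method, and then pass from the tail bound to the $L^p$ bound via the layer-cake formula.

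Writing $\sigma^2 := \sum_j c_j^2$ and $S_N(\omega) := \sum_{j=1}^N c_j h_j(\omega)$, I would first work at finite $N$. For any $\gamma > 0$, Markov's inequality applied to $e^{\gamma S_N}$ gives $\mathbb{P}(S_N > \lambda) \leq e^{-\gamma\lambda}\,\mathbb{E}[e^{\gamma S_N}]$. Independence factorizes the expectation, and the MGF hypothesis (applied with $\gamma c_j$ in place of $\gamma$) yields $\mathbb{E}[e^{\gamma S_N}] \leq \prod_{j\leq N} e^{c\gamma^2 c_j^2} \leq e^{c\gamma^2\sigma^2}$. Optimizing the resulting quadratic $c\gamma^2\sigma^2 - \gamma\lambda$ over $\gamma>0$ (choosing $\gamma = \lambda/(2c\sigma^2)$) gives the one-sided bound $\mathbb{P}(S_N > \lambda) \leq e^{-\lambda^2/(4c\sigma^2)}$. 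The MGF hypothesis is symmetric under $h_j \mapsto -h_j$, so the same bound holds for $-S_N$, and adding the two produces
$$\mathbb{P}(|S_N|>\lambda) \leq 2 e^{-\beta \lambda^2/\sigma^2}, \qquad \beta := 1/(4c).$$
To remove the truncation I would note that the MGF hypothesis forces $\mathbb{E}[h_j^2] \leq 2c$, so $(S_N)_N$ is an $L^2$-bounded martingale that converges almost surely to $S := \sum_j c_j h_j(\omega)$; the tail bound for $S$ then follows from the corresponding bound for $S_N$ by Fatou.

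For the $L^p$ bound, the layer-cake formula gives
$$\mathbb{E}|S|^p = p \int_0^\infty \lambda^{p-1} \mathbb{P}(|S|>\lambda)\, d\lambda \leq 2p \int_0^\infty \lambda^{p-1} e^{-\beta \lambda^2/\sigma^2}\, d\lambda.$$
The substitution $u = \beta\lambda^2/\sigma^2$ reduces the right-hand side to $p\,\Gamma(p/2)(\sigma^2/\beta)^{p/2}$, and Stirling then yields $(p\,\Gamma(p/2))^{1/p} \lesssim \sqrt{p}$, whence $\|S\|_{L_\omega^p(\Omega)} \lesssim \sqrt{p}\,\sigma$, as claimed.

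I do not anticipate any genuine obstacle, since every step is textbook. The only mildly delicate point is justifying the almost-sure convergence of the infinite series $\sum_j c_j h_j(\omega)$, and this is handled cleanly by the $L^2$-bounded martingale observation above; the rest is a direct computation.
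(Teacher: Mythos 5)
Your proof is correct. The paper itself gives no proof of this lemma --- it is imported verbatim from Burq--Tzvetkov \cite{Burq-Tzvetkov1} --- so there is nothing internal to compare against; your Chernoff-bound-plus-layer-cake argument is the standard proof and is essentially the one in the cited reference, with the passage to the infinite series via the $L^2$-bounded martingale observation handled correctly.
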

\begin{remark}
Note that, the distributions $\theta_j$ of the Gaussian random variables $g_j(\omega)$
\begin{equation*}
{\rm d}\theta_j(x)=(2\pi)^{-\frac{1}{2}}e^{-\frac{x^2}{2}}{\rm d}x
\end{equation*}
satisfy:
\begin{equation*}
\exists c>0,\ \left|\int_{\R}e^{\gamma x}{\rm d}\theta_j\right|\leq e^{c\gamma^2}
\end{equation*}
for all $\gamma\in\R$ and $j\in\mathbb{N}$.  
\end{remark}


\begin{lemma}[{\cite[\emph{Lemma 2.5}]{Luhrmann-Mendelson}}]\label{L-D estimate2}
Suppose $F$ is a measurable function on a probability space $(\Omega,\mathcal{A},\mathbb{P})$, and suppose that there exist $C>0,A>0$ and $p_0\geq 1$ such that for $p\geq p_0$, we have 
\begin{equation*}
\|F\|_{L^{p}_{\om}(\Om)}\leq C\sqrt{p}A.
\end{equation*}
Then there exists $C'=C'(C,p_0)>0$ and $c=c(C,p_0)>0$ such that for every $\lambda>0$,
\begin{equation*}
\mathbb{P}(\om\in\Om:|F(\om)|>\lambda)\leq C'{\rm exp}\{-c\lambda^2A^{-2}\}.
\end{equation*}
\end{lemma}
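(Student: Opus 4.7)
The plan is to upgrade the polynomial moment bound $\|F\|_{L^p_\omega} \leq C\sqrt{p}\,A$ into a subgaussian tail bound by combining Markov's inequality with an optimization over $p$. This is the standard Chebyshev--to--subgaussian argument.

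First I would apply Markov's inequality: for any $p \geq p_0$ and any $\lambda > 0$,
\begin{equation*}
\mathbb{P}(|F(\omega)| > \lambda) \leq \lambda^{-p}\,\mathbb{E}[|F|^p] = \lambda^{-p}\|F\|_{L^p_\omega(\Omega)}^p \leq \left(\frac{C\sqrt{p}\,A}{\lambda}\right)^p.
\end{equation*}
The key observation is that, viewed as a function of $p$, the right-hand side is minimized (up to a constant) when $p$ is proportional to $\lambda^2/(C^2 A^2)$. Specifically, I would pick
\begin{equation*}
p = p(\lambda) \coloneqq \frac{\lambda^2}{e\,C^2 A^2},
\end{equation*}
so that $C\sqrt{p}\,A/\lambda = e^{-1/2}$ and hence $(C\sqrt{p}\,A/\lambda)^p = e^{-p/2} = \exp\{-\lambda^2/(2eC^2 A^2)\}$.

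The only issue is that the moment hypothesis is available only for $p \geq p_0$, whereas the optimal $p(\lambda)$ is small when $\lambda$ is small. I would therefore split into two cases. If $\lambda$ is large enough that $p(\lambda) \geq p_0$, i.e.\ $\lambda \geq \sqrt{e\,p_0}\,CA \eqqcolon \lambda_0$, then the above computation goes through and yields
\begin{equation*}
\mathbb{P}(|F| > \lambda) \leq \exp\{-c\,\lambda^2 A^{-2}\}, \qquad c = \frac{1}{2eC^2}.
\end{equation*}
If instead $\lambda < \lambda_0$, then $\exp\{-c\,\lambda^2 A^{-2}\} \geq \exp\{-c\,\lambda_0^2 A^{-2}\} = e^{-p_0/2}$, so the bound $\mathbb{P}(|F|>\lambda) \leq 1 \leq e^{p_0/2}\exp\{-c\lambda^2 A^{-2}\}$ holds trivially. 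Taking $C' \coloneqq e^{p_0/2}$ (which depends only on $p_0$) and $c$ as above (depending only on $C$) then gives the claimed subgaussian tail estimate uniformly in $\lambda > 0$.

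There is essentially no obstacle here; the only minor subtlety is the case distinction to accommodate the restriction $p \geq p_0$ in the hypothesis, which is handled by absorbing the small-$\lambda$ range into the constant $C'$.
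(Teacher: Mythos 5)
Your argument is correct: the Markov-inequality-plus-optimization in $p$ computation is exactly the standard proof of this large deviation lemma (the paper itself only cites it from L\"{u}hrmann--Mendelson without reproducing the proof), and your case distinction correctly handles the restriction $p\geq p_0$ by absorbing the small-$\lambda$ regime into $C'=e^{p_0/2}$. The constants $c=1/(2eC^2)$ and $C'=e^{p_0/2}$ indeed depend only on $C$ and $p_0$, as required.
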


In the following, we review the random Strichartz estimate that appears in \cite{Shen-Soffer-Wu 1}. 

\begin{lemma}[Random Strichartz estimate {\cite[\emph{Lemma 3.1}]{Shen-Soffer-Wu 1}}]\label{random strichartz estimate}
Let $d\geq 1$ and $f\in L_x^2(\R^d)$. Define the randomization $f^\omega$ as in Definition \ref{randomization}. Suppose $2\leq q,r<\infty$ with $(q,r)\in \cup_{\alpha\geq 0}\Lambda_{\alpha}$, and let $2\leq r_0\leq r$ such that $(q,r_0)\in\Lambda_0$. Then for any $p>\max\{q,r\}$ and $0\leq s\leq a\left(\frac{d}{r_0}-\frac{d}{r}\right)$, 
\begin{equation}\label{random estimate 1}
\|\langle\nabla\rangle^s e^{it\Delta}f^\omega\|_{L_\omega^p(\Omega;L_t^q(\R;L_x^r(\R^d)))}\lesssim\sqrt p\|f\|_{L_x^2(\R^d)}.
\end{equation}
\end{lemma}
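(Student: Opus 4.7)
My strategy is standard for Wiener-type randomization results: move the probability exponent outside by Minkowski, collapse the Gaussian sum to an $\ell^2$ square function via the large deviation estimate, then convert the spatial exponent $r$ to the Strichartz-admissible companion $r_0$ using the narrow-cube Bernstein-type inequality of Lemma \ref{p-q estimate}.

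Since $p>\max\{q,r\}$, Minkowski lets us exchange the order of integration,
\[
\|\langle\nabla\rangle^s e^{it\Delta}f^\omega\|_{L^p_\omega L^q_t L^r_x}\leq \|\langle\nabla\rangle^s e^{it\Delta}f^\omega\|_{L^q_t L^r_x L^p_\omega}.
\]
Writing $\langle\nabla\rangle^s e^{it\Delta}f^\omega(x)=\sum_j g_j(\omega)\,\langle\nabla\rangle^s e^{it\Delta}\Box_j f(x)$ and applying Lemma \ref{L-D estimate1} pointwise in $(t,x)$, followed by Minkowski in the space and time variables (valid because $q,r\geq 2$), one obtains
\[
\|\langle\nabla\rangle^s e^{it\Delta}f^\omega\|_{L^q_t L^r_x L^p_\omega}\lesssim \sqrt{p}\,\Big(\sum_j\|\langle\nabla\rangle^s e^{it\Delta}\Box_j f\|_{L^q_t L^r_x}^2\Big)^{1/2}.
\]

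Each square-function term I would estimate by trading $r$ for $r_0$. Lemma \ref{p-q estimate} yields, for every fixed $t$,
\[
\|\langle\nabla\rangle^s e^{it\Delta}\Box_j f\|_{L^r_x}\lesssim \|\langle\nabla\rangle^{s-a(\frac{d}{r_0}-\frac{d}{r})}e^{it\Delta}\Box_j f\|_{L^{r_0}_x},
\]
and the hypothesis $s\leq a\bigl(\frac{d}{r_0}-\frac{d}{r}\bigr)$ makes the order of the Bessel potential nonpositive, so it is bounded on $L^{r_0}_x$. The classical Strichartz estimate of Lemma \ref{classical Strichatz} applied to the admissible pair $(q,r_0)$ then gives $\|e^{it\Delta}\Box_j f\|_{L^q_t L^{r_0}_x}\lesssim \|\Box_j f\|_{L^2_x}$, and the orthogonality identity $\bigl(\sum_j\|\Box_j f\|_{L^2_x}^2\bigr)^{1/2}\sim \|f\|_{L^2_x}$ recalled at the beginning of Section \ref{section 2} closes the estimate.

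The only non-routine ingredient is the narrow-cube Bernstein estimate Lemma \ref{p-q estimate}: because each $\Box_j$ is frequency-localised to a cube of side $N^{-a}$ rather than a standard dyadic annulus, one gains $a\bigl(\frac{d}{r_0}-\frac{d}{r}\bigr)$ derivatives when passing from $L^{r_0}_x$ to $L^r_x$, which is precisely what allows the factor $\langle\nabla\rangle^s$ to be absorbed. This absorption would fail for the conventional dyadic Wiener randomization, and is the structural reason the ``narrowed'' randomization is introduced; the freedom to take $a$ large, together with the range conditions \eqref{parameter a}, is what ultimately drives all the probabilistic bounds used later in the paper.
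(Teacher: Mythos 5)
Your proof is correct and is essentially the argument given in the cited source (the paper itself states this lemma without proof, quoting \cite[Lemma 3.1]{Shen-Soffer-Wu 1}): Minkowski using $p>\max\{q,r\}$, the large deviation estimate of Lemma \ref{L-D estimate1} pointwise in $(t,x)$, Minkowski again using $q,r\geq 2$, the narrow-cube $L^{r_0}_x\to L^r_x$ gain of Lemma \ref{p-q estimate} to absorb $\langle\nabla\rangle^s$, classical Strichartz for $(q,r_0)\in\Lambda_0$, and orthogonality of the $\Box_j$. The only (routine) points left implicit are that $\Box_j$ commutes with the Fourier multipliers $\langle\nabla\rangle^s$ and $e^{it\Delta}$ and that $\langle\nabla\rangle^{-\sigma}$ with $\sigma\geq 0$ is bounded on $L^{r_0}_x$.
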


\section{Interaction morawetz estimate}\label{section 3}
This section aims to establish the interaction Morawetz of the solution $w$ for equation \eqref{NLS-hartree-w}. We adopt the convention that repeated indices are summed throughout this section. For $f,g$, define the momentum brackets by
\begin{equation*}
\{f,g\}_p=\RRe(f\nabla\bar g-g\nabla\bar f).
\end{equation*}

Recall the standard Morawetz action centered at a point. Let $m$ be a function on the slab $I\times\R^d$, and suppose $\phi$ satisfies
\begin{equation*}
i\pa_t\phi+\Delta\phi=F
\end{equation*}
on $I\times\R^d$. We define the Morawetz action centered at zero to be
\begin{equation*}
M_m^0(t)=2\int_{\R^d}m_j(x)\IIm(\bar\phi(x)\phi_j(x)){\rm d}x.
\end{equation*}
Here, $m_j(x)=\pa_{x_j}m(x)$.

To begin with, we introduce the following lemma. It can be found in \cite[\emph{Lemma 6.1}]{Miao-Xu-Zhao 2011(PDE)} and \cite[\emph{Lemma 5.3}]{Visan}.
\begin{lemma}\label{Classical Morawetz}
Let $m$ be a smooth weight, then
\begin{equation}
\pa_t M_m^0
=\int_{\R^d}(-\Delta\Delta m)|\phi|^2
+4\int_{\R^d}m_{jk}\RRe(\bar\phi_j\phi_k)
+2\int_{\R^d}m_j\{F,\phi\}_p^j,
\end{equation}
where $\{F,\phi\}_p^j=\RRe(F\pa_j\bar\phi-\phi\pa_j\bar F).$
\end{lemma}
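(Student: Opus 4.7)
The plan is to compute $\pa_t M_m^0$ directly, use the Schr\"odinger-type equation $i\pa_t\phi = F-\Delta\phi$ to trade the time derivative for spatial derivatives plus $F$, and then perform a sequence of integrations by parts in $x$ to transfer the derivatives onto the weight $m$.

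First, differentiation under the integral gives
\begin{equation*}
\pa_t M_m^0 = 2\int_{\R^d} m_j\,\IIm\bigl(\bar\phi_t\phi_j + \bar\phi\,\phi_{jt}\bigr)\,dx.
\end{equation*}
Substituting $\phi_t = i\Delta\phi - iF$ and the conjugate relation, the identity $\IIm(iz)=\RRe(z)$ converts the integrand into a sum of two natural pieces: a forcing piece $2\int m_j\,\RRe(\bar F\phi_j - \bar\phi F_j)\,dx$ and a free piece $2\int m_j\bigl[-\RRe((\Delta\bar\phi)\phi_j) + \RRe(\bar\phi\,\pa_j\Delta\phi)\bigr]\,dx$. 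The forcing piece I would immediately recognize as $2\int m_j\{F,\phi\}_p^j\,dx$, using invariance of $\RRe$ under complex conjugation to match the definition of the momentum bracket.

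For the free piece, I would expand each summand by Leibniz: $\RRe(\bar\phi\,\pa_j\Delta\phi) = \pa_j\RRe(\bar\phi\Delta\phi) - \pa_k\RRe(\bar\phi_j\phi_k) + \RRe(\bar\phi_{jk}\phi_k)$, and symmetrically $\RRe((\Delta\bar\phi)\phi_j)=\RRe(\bar\phi_j\Delta\phi) = \pa_k\RRe(\bar\phi_j\phi_k) - \RRe(\bar\phi_{jk}\phi_k)$. Their difference yields
\begin{equation*}
\pa_j\RRe(\bar\phi\Delta\phi) - 2\pa_k\RRe(\bar\phi_j\phi_k) + 2\RRe(\bar\phi_{jk}\phi_k).
\end{equation*}
Pairing with $m_j$ and integrating by parts in $x_k$ on the middle term contributes the Hessian piece $4\int m_{jk}\RRe(\bar\phi_j\phi_k)\,dx$. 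For the remaining two terms, I would invoke $\RRe(\bar\phi_{jk}\phi_k) = \tfrac12 \pa_j|\nabla\phi|^2$ and the basic identity $\Delta|\phi|^2 = 2|\nabla\phi|^2 + 2\RRe(\bar\phi\Delta\phi)$, which allows them to collapse after one integration by parts into $-\int \Delta m\cdot\Delta|\phi|^2\,dx$. Two more integrations by parts then produce $\int(-\Delta\Delta m)|\phi|^2\,dx$, completing the claimed formula.

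The main obstacle is the bookkeeping of signs and indices through these successive integrations by parts. In particular, one must check that the stray $\pa_j\RRe(\bar\phi\Delta\phi)$ contribution combines correctly with the $\RRe(\bar\phi_{jk}\phi_k)$ contribution (after the appropriate integration by parts) so that only the biharmonic $-\Delta\Delta m$ survives in the mass-like term, with no residual $\Delta m\cdot\RRe(\bar\phi\Delta\phi)$ or $\Delta m\cdot|\nabla\phi|^2$ remaining; once this cancellation is verified, everything else is routine.
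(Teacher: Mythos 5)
Your computation is correct and is the standard derivation; note that the paper does not actually prove this lemma but cites it from \cite{Miao-Xu-Zhao 2011(PDE)} and \cite{Visan}, so there is no in-paper proof to compare against. The cancellation you flag as the main obstacle does go through: after the Hessian term is extracted, the leftover pieces combine as $2\int m_j\,\pa_j\bigl[\RRe(\bar\phi\Delta\phi)+|\nabla\phi|^2\bigr]\,dx=\int m_j\,\pa_j\Delta|\phi|^2\,dx$ by the identity $\Delta|\phi|^2=2\RRe(\bar\phi\Delta\phi)+2|\nabla\phi|^2$, and three integrations by parts turn this into $\int(-\Delta\Delta m)|\phi|^2\,dx$ with no residual $\Delta m$ terms.
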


Next, we define the interaction Morawetz action
\begin{equation}
\begin{split}
M(t)
&\coloneqq
\int_{\R^d}|\phi(y)|^2M_m^y(t){\rm d}t\\
&=2\IIm\iint_{\R^d\times\R^d}|\phi(y)|^2\nabla m(x-y)\cdot\nabla \phi(x)\bar\phi(x){\rm d}x{\rm d}y.
\end{split}
\end{equation}
Note that
\begin{equation}\label{Classical Morawetz1}
\pa_t(|\phi|^2)
=2\IIm(-\Delta\phi\bar\phi+F\bar\phi)
=-2\pa_j\IIm(\pa_j\phi\bar\phi)+2\IIm(F\bar\phi),
\end{equation}
and the subsequent lemma follows directly from Lemma \ref{Classical Morawetz} and \eqref{Classical Morawetz1}.

\begin{lemma}\label{Classical interaction Morawetz}
\begin{subequations}
\begin{align}
\pa_tM(t)
=&\int_{\R^d}\pa_t(|\phi(y)|^2)M_m^y(t){\rm d}t
+\int_{\R^d}|\phi(y)|^2\pa_tM_m^y(t){\rm d}t\notag\\
=&-4\iint_{\R^d\times\R^d}\IIm(\pa_j\phi\bar\phi)(y)m_{jk}(x-y)\IIm(\pa_k\phi\bar\phi)(x){\rm d}x{\rm d}y\label{C-Ma}\\
&+4\iint_{\R^d\times\R^d}\IIm(F\bar\phi)(y)\nabla m(x-y)\cdot\IIm(\nabla\phi\bar\phi)(x){\rm d}x{\rm d}y\label{C-Mb}\\
&+\iint_{\R^d\times\R^d}(-\Delta\Delta m)(x-y)|\phi(x)|^2|\phi(y)|^2{\rm d}x{\rm d}y\label{C-Mc}\\
&+4\iint_{\R^d\times\R^d}|\phi(y)|^2m_{jk}(x-y)\RRe(\pa_j\bar\phi\pa_k\phi)(x){\rm d}x{\rm d}y\label{C-Md}\\
&+2\iint_{\R^d\times\R^d}|\phi(y)|^2m_j(x-y)\{F,\phi\}_p^j(x){\rm d}x{\rm d}y\label{C-Me}.
\end{align}
\end{subequations}
\end{lemma}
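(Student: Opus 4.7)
The plan is to differentiate $M(t)$ directly by the product rule and then substitute the two available identities: the continuity equation \eqref{Classical Morawetz1} for the term in which the time derivative lands on $|\phi(y)|^2$, and Lemma \ref{Classical Morawetz} (translated to be centered at $y$ rather than at $0$) for the term in which it lands on $M_m^y(t)$. Writing
\[
\pa_t M(t)=\int_{\R^d}\pa_t(|\phi(y)|^2)\,M_m^y(t)\,\mathrm{d}y+\int_{\R^d}|\phi(y)|^2\,\pa_t M_m^y(t)\,\mathrm{d}y
\]
separates the computation cleanly into the ``$y$-part'' and the ``$x$-part.''

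For the first integral, I would substitute
\[
\pa_t(|\phi(y)|^2)=-2\pa_{y_j}\IIm(\pa_j\phi\,\bar\phi)(y)+2\IIm(F\bar\phi)(y)
\]
and unfold $M_m^y(t)=2\int m_k(x-y)\IIm(\bar\phi\phi_k)(x)\,\mathrm{d}x$. The $F$-piece immediately yields \eqref{C-Mb}. For the divergence piece, I integrate by parts in $y$; since $\pa_{y_j}m_k(x-y)=-m_{jk}(x-y)$ and the Hessian is symmetric, this produces \eqref{C-Ma} after renaming indices. Justification of the integration by parts is standard — one works with Schwartz solutions (or smoothly truncated ones) and then passes to the limit via the usual approximation arguments for Morawetz identities.

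For the second integral, I apply Lemma \ref{Classical Morawetz} with the weight $m$ replaced by its translate $m(\cdot-y)$. Since all derivatives $\pa_{x_j}m(x-y)=m_j(x-y)$, $\pa_{x_j}\pa_{x_k}m(x-y)=m_{jk}(x-y)$, and $-\Delta\Delta m(x-y)$ simply pick up the argument $x-y$, Lemma \ref{Classical Morawetz} gives
\[
\pa_t M_m^y(t)=\int_{\R^d}(-\Delta\Delta m)(x-y)|\phi(x)|^2\,\mathrm{d}x+4\int_{\R^d}m_{jk}(x-y)\RRe(\bar\phi_j\phi_k)(x)\,\mathrm{d}x+2\int_{\R^d}m_j(x-y)\{F,\phi\}_p^j(x)\,\mathrm{d}x.
\]
Multiplying by $|\phi(y)|^2$ and integrating in $y$ produces \eqref{C-Mc}, \eqref{C-Md}, and \eqref{C-Me} respectively.

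I expect no serious obstacle: the proof is purely algebraic once Lemma \ref{Classical Morawetz} and the continuity equation \eqref{Classical Morawetz1} are in hand. The only delicate point is the symmetrization/integration by parts that converts the divergence term into the double momentum-current expression \eqref{C-Ma}; keeping careful track of indices (in particular using a fresh summation index $k$ in $M_m^y$ so as not to collide with the $j$ coming from the continuity equation) is what makes the identity fall out cleanly.
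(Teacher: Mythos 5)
Your proposal is correct and follows exactly the route the paper intends: the paper gives no detailed proof, merely asserting that the lemma ``follows directly from Lemma \ref{Classical Morawetz} and \eqref{Classical Morawetz1},'' which is precisely the product-rule decomposition you carry out, with the continuity equation plus integration by parts in $y$ producing \eqref{C-Ma}--\eqref{C-Mb} and the translated Lemma \ref{Classical Morawetz} producing \eqref{C-Mc}--\eqref{C-Me}. Your sign bookkeeping ($\pa_{y_j}m_k(x-y)=-m_{jk}(x-y)$ together with the symmetry of the Hessian) is right, and you even correct the paper's typo of ${\rm d}t$ for ${\rm d}y$ in the first line.
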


Using Lemma \ref{Classical interaction Morawetz}, we derive the interaction Morawetz estimate of the solution $w$ for equation \eqref{NLS-hartree-w}.

\begin{proposition}\label{morawetz NLS-hartree-w}
Let $w$ be the solution of \eqref{NLS-hartree-w}. Then for $[0,T]\subset\R$ we have
\begin{equation}\label{interaction morawetz estimate}
\begin{split}
\big\||\nabla|^{-\frac{1}{2}}w\big\|_{L_{t,x}^4}^4
\lesssim
&\|w(t)\|_{L_t^\infty L_x^2}^2\|w(t)\|_{L_t^\infty\dot H_x^{\frac{1}{2}}}^2\\
&+\|v\|_{L_t^2L_x^5}
\left(\|w\|_{L_t^\infty H^1_x}\big\||\nabla|^{-\frac{1}{2}}w\right\|_{L_{t,x}^4}^2+\|v\|_{L_t^6L_x^3}^3\big)\|w\|^2_{L_t^\infty\dot H^{\frac{1}{2}}_y}\\
&+\|\nabla v\|_{L_t^2L_x^5}
\left(\|w\|_{L_t^\infty H^1_x}\big\||\nabla|^{-\frac{1}{2}}w\right\|_{L_{t,x}^4}^2+\|v\|_{L_t^6L_x^3}^3\big)\|w\|_{L_t^\infty L_y^2}^2,
\end{split}
\end{equation}
where all the space-time norms are taken over $[0,T]\times\R^5.$
\end{proposition}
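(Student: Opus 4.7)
The plan is to specialize Lemma \ref{Classical interaction Morawetz} to $\phi = w$ with forcing $F = (|\cdot|^{-4}*|w|^2)w + e$, with the weight $m(x) = |x|$ on $\R^5$, and then integrate the resulting identity over $[0,T]$. For this weight one has $|\nabla m| = 1$, the Hessian $m_{jk}(x) = \delta_{jk}/|x| - x_jx_k/|x|^3$ is positive semidefinite, and $-\Delta\Delta m(x) = 8|x|^{-3}$ is positive. First I would bound the boundary term $M(T)-M(0)$ by $\|w\|_{L^\infty_t L^2_x}^2 \|w\|_{L^\infty_t \dot H^{1/2}_x}^2$, using a standard Fourier-side estimate for the trilinear form $\iint |w(y)|^2 \nabla m(x-y) \cdot \IIm(\bar w \nabla w)(x)\,dx\,dy$ that views convolution with $\nabla m$ as a Riesz-type operator; this produces the first summand on the right of \eqref{interaction morawetz estimate}.

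Next I would collect the non-negative contributions of the Morawetz identity. By Lemma \ref{Lieb-Loss estimate} and Lemma \ref{Visan estimate}, the term \eqref{C-Mc} gives
\[
\int_0^T\!\!\iint \frac{|w(t,x)|^2|w(t,y)|^2}{|x-y|^3}\,dx\,dy\,dt \;\simeq\; \||\nabla|^{-1}|w|^2\|_{L^2_{t,x}}^2 \;\gtrsim\; \||\nabla|^{-1/2}w\|_{L^4_{t,x}}^4,
\]
which is the left-hand side of \eqref{interaction morawetz estimate}. The Hessian terms \eqref{C-Ma}+\eqref{C-Md} combine into a non-negative expression via Cauchy-Schwarz on the momentum density $\IIm(\bar w\,\pa_j w)$ paired against the PSD matrix $m_{jk}(x-y)$. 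The Hartree piece of \eqref{C-Me} yields $\{(|\cdot|^{-4}*|w|^2)w,w\}_p^j = -|w|^2\pa_{x_j}(|\cdot|^{-4}*|w|^2)$, which, after a symmetrization $x\leftrightarrow y$ and using that $|x|^{-4}$ is radially decreasing, has a favorable sign and can be discarded.

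The principal task is then to absorb the error contributions coming from the $e$-part of $F$ in \eqref{C-Mb} and \eqref{C-Me}. Using the expansion
\[ e = (|\cdot|^{-4}*|u|^2)\,v + (|\cdot|^{-4}*(|v|^2 + 2\RRe(v\bar w)))\,w, \]
so that each summand carries at least one factor of $v$, I would estimate the \eqref{C-Mb} contribution $\iint \IIm(e\bar w)(y)\,\nabla m(x-y)\cdot \IIm(\bar w \nabla w)(x)\,dx\,dy$ by applying Hardy-Littlewood-Sobolev (Lemma \ref{Hardy-Littlewood Sobolev inequality}) to reassign the convolution kernel, followed by H\"older in space and time with $v$ placed in $L^2_t L^5_x$, the $w$-factors distributed between $\|w\|_{L^\infty_t H^1_x}$ and $\||\nabla|^{-1/2}w\|_{L^4_{t,x}}^2$ (via Sobolev and Gagliardo-Nirenberg \eqref{G-N}), and pure cubic-$v$ contributions placed in $\|v\|_{L^6_t L^3_x}^3$; the remaining $x$-factor produces $\|w\|_{L^\infty_t \dot H^{1/2}_y}^2$ by the same Fourier argument used for the boundary term, giving the second line of \eqref{interaction morawetz estimate}. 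For \eqref{C-Me} an analogous computation of $\{e,w\}_p^j$ is made; the summand in which the derivative lands on $\bar e$ is the one that produces the $\|\nabla v\|_{L^2_t L^5_x}$ factor, and the $|w(y)|^2$ in front now sits directly in $\|w\|_{L^\infty_t L^2_y}^2$ rather than passing through a momentum density, giving the third line of \eqref{interaction morawetz estimate}.

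The main obstacle is the nonlocality of the Hartree nonlinearity: each summand in $e$ is itself a convolution, so controlling the error contributions requires repeated applications of HLS to reassign the kernel $|\cdot|^{-4}$ to the factor that can most efficiently absorb the loss of integrability, while one must track where the derivative lands when computing $\pa_j\bar e$ in \eqref{C-Me} in order to separate the contributions that produce $\|v\|_{L^2_t L^5_x}$ from those producing $\|\nabla v\|_{L^2_t L^5_x}$. Residual borderline terms that would reproduce $\||\nabla|^{-1/2}w\|_{L^4_{t,x}}^4$ with a small constant can be absorbed into the left-hand side once $N_0$ in \eqref{v0 w0} is chosen large and Young's inequality is invoked.
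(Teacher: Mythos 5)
Your proposal is correct and follows essentially the same route as the paper: specialize the interaction Morawetz identity with weight $m=|x|$, extract the coercive $-\Delta\Delta m$ term via Lemmas \ref{Lieb-Loss estimate} and \ref{Visan estimate}, discard the non-negative Hessian and Hartree-potential contributions, and estimate the $e$-error terms by H\"older, Hardy--Littlewood--Sobolev and Gagliardo--Nirenberg with $v$ (resp.\ $\nabla v$) placed in $L_t^2L_x^5$. The only minor differences are that the paper establishes the sign of the Hartree momentum-bracket term by integrating by parts onto $\Delta m\geq 0$ rather than by your symmetrization/radial-monotonicity argument, and your closing remark about absorbing borderline quartic terms into the left-hand side is unnecessary at this stage (the stated inequality keeps those terms quadratic in $\||\nabla|^{-1/2}w\|_{L_{t,x}^4}$ on the right; absorption only occurs later, in Proposition \ref{almost conservation law}).
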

\begin{proof}
Let $\phi=w$ and $F=F(w)=(|\cdot|^{-4}\ast |w|^2)w+e$ in Lemma \ref{Classical interaction Morawetz},  where $e=(|\cdot|^{-4}\ast|u|^2)u-(|\cdot|^{-4}\ast |w|^2)w$, we derive that:
\begin{equation*}
\begin{split}
\eqref{C-Mb}
=&
4\iint\IIm((|\cdot|^{-4}\ast |w|^2)|w|^2+e\bar w)(y)\nabla m(x-y)\cdot\IIm(\nabla w\bar w)(x){\rm d}x{\rm d}y\\
=&4\iint\IIm((|\cdot|^{-4}\ast |w|^2)|w|^2)(y)\nabla m(x-y)\cdot\IIm(\nabla w\bar w)(x){\rm d}x{\rm d}y\\
&+4\iint\IIm(e\bar w)(y)\nabla m(x-y)\cdot\IIm(\nabla w\bar w)(x){\rm d}x{\rm d}y\\
=&4\iint\IIm(e\bar w)(y)\nabla m(x-y)\cdot\IIm(\nabla w\bar w)(x){\rm d}x{\rm d}y,
\end{split}
\end{equation*}
and 
\begin{equation*}
\begin{split}
\eqref{C-Me}
=&
2\iint|w(y)|^2m_j(x-y)\{(|\cdot|^{-4}\ast |w|^2)w+e,w\}_p^j(x){\rm d}x{\rm d}y.
\end{split}
\end{equation*}
Note that
\begin{equation*}
\begin{split}
&\{(|\cdot|^{-4}\ast |w|^2)w+e,w\}_p^j\\
=&
\RRe\left[\left((|\cdot|^{-4}\ast |w|^2)w+e\right)\pa_j\bar w-w\pa_j\left(\overline{(|\cdot|^{-4}\ast |w|^2)w+e}\right)\right]\\
=&
-2\RRe\left[(|\cdot|^{-4}\ast\pa_j w\bar w)|w|^2\right]
+\RRe(e\pa_j\bar w-\pa_j\bar e w).
\end{split}
\end{equation*}
Hence, we get
\begin{equation*}
\begin{split}
\eqref{C-Me}
=&
-4\RRe\iint|w(y)|^2m_j(x-y)\big[(|\cdot|^{-4}\ast\pa_j w\bar w)|w|^2\big](x){\rm d}x{\rm d}y\\
&+2\RRe\iint|w(y)|^2m_j(x-y)(e\pa_j\bar w-\pa_j\bar e w)(x){\rm d}x{\rm d}y\\
=&
-\RRe\iint|w(y)|^2m_j(x-y)\pa_j\big[(|\cdot|^{-4}\ast |w|^2)|w|^2\big](x){\rm d}x{\rm d}y\\
&+2\RRe\iint|w(y)|^2m_j(x-y)(2e\pa_j\bar w-\pa_j(e\bar  w))(x){\rm d}x{\rm d}y.
\end{split}
\end{equation*}
According to the above,
\begin{subequations}\label{Classical Morawetz 1}
\begin{align}
\pa_tM(t)
=&\int_{\R^d}\pa_t(|w(y)|^2)M_m^y(t){\rm d}t
+\int_{\R^d}|w(y)|^2\pa_tM_m^y(t){\rm d}t\notag\\
=&-4\iint\IIm(\pa_jw\bar w)(y)m_{jk}(x-y)\IIm(\pa_kw\bar w)(x){\rm d}x{\rm d}y\label{C-Ma1}\\
&+\iint(-\Delta\Delta m)(x-y)|w(x)|^2|w(y)|^2{\rm d}x{\rm d}y\label{C-Mb1}\\
&+4\iint|w(y)|^2m_{jk}(x-y)\RRe(\pa_j\bar w\pa_k w)(x){\rm d}x{\rm d}y\label{C-Mc1}\\
&+\RRe\iint|w(y)|^2\Delta m(x-y)\big[(|\cdot|^{-4}\ast |w|^2)|w|^2\big](x){\rm d}x{\rm d}y\label{C-Md1}\\
&+4\iint\IIm(e\bar w)(y)\nabla m(x-y)\cdot\IIm(\nabla w\bar w)(x){\rm d}x{\rm d}y\label{C-Me1}\\
&+4\RRe\iint|w(y)|^2\nabla m(x-y)(e\nabla\bar w)(x){\rm d}x{\rm d}y\label{C-Mf1}\\
&+2\RRe\iint|w(y)|^2\Delta m(x-y)(e\bar  w)(x){\rm d}x{\rm d}y\label{C-Mg1}.
\end{align}
\end{subequations}
Let $m(x-y)=|x-y|.$ Then for $d=5,$ an easy calculation shows that
\begin{equation*}
\begin{split}
\nabla m(x-y)&=\frac{x-y}{|x-y|},\\
m_{jk}(x-y)&=\frac{\d_{jk}}{|x-y|}-\frac{(x_j-y_j)(x_k-y_k)}{|x-y|^3},\\
\Delta m(x-y)&=\frac{4}{|x-y|},
\ \Delta\Delta m(x-y)=-\frac{8}{|x-y|^3}.
\quad
\end{split}
\end{equation*}
In this case,
\begin{equation}
\begin{split}
|M(t)|
&=\left|2\IIm\iint\frac{x-y}{|x-y|}|w(y)|^2\cdot(\nabla w\bar w)(x){\rm d}x{\rm d}y\right|\\
&\lesssim\|w(t)\|_{L_x^2}^2\|w(t)\|_{\dot H_x^{\frac{1}{2}}}^2.
\end{split}
\end{equation}

We claim that
\begin{equation}\label{01}
{\eqref{C-Ma1}+\eqref{C-Mc1}\geq 0.}
\end{equation}
Indeed, by symmetry in $x$ and $y$, we have
\begin{equation*}
\begin{split}
&|\eqref{C-Ma1}|\\
=&4\left|\iint\IIm(\pa_jw\bar w)(y)\left(\frac{\d_{jk}}{|x-y|}-\frac{(x_j-y_j)(x_k-y_k)}{|x-y|^3}\right)\IIm(\pa_kw\bar w)(x){\rm d}x{\rm d}y\right|\\
=&4\left|\iint\frac{1}{|x-y|}\IIm(\not\!\nabla_xw\bar w)(y)\IIm(\not\!\nabla_yw\bar w)(x){\rm d}x{\rm d}y\right|\\
\leq&
2\iint\frac{1}{|x-y|}(|\not\!\nabla_x w(y)|^2|w(x)|^2+|\not\!\nabla_y w(x)|^2|w(y)|^2){\rm d}x{\rm d}y\\
\leq&4\iint\frac{1}{|x-y|}|\not\!\nabla_y w(x)|^2|w(y)|^2{\rm d}x{\rm d}y\\
\leq&
\eqref{C-Mc1},
\end{split}
\end{equation*}
where 
\begin{equation*}
\not\!\nabla_x=\nabla_y-\frac{x-y}{|x-y|}\left(\frac{x-y}{|x-y|}\cdot\nabla_y\right).
\end{equation*}
On the other hand, it is easy to verify that 
\begin{equation}\label{02}
{\eqref{C-Md1}}\geq 0.
\end{equation}
From \eqref{Classical Morawetz 1}, \eqref{01} and \eqref{02}, we obtain that
\begin{equation*}
\begin{split}
\eqref{C-Mb1}
&=\pa_t M(t)-(\eqref{C-Ma1}+\eqref{C-Mc1}+\eqref{C-Md1})-(\eqref{C-Me1}+\eqref{C-Mf1}+\eqref{C-Mg1})\\
&\leq
\pa_t M(t)-(\eqref{C-Me1}+\eqref{C-Mf1}+\eqref{C-Mg1}).
\end{split}
\end{equation*}
Integrating over $[0,T]$, it holds that
\begin{equation*}
\begin{split}
\int_0^T|\eqref{C-Mb1}|{\rm d}t
&\leq
M(T)-M(0)+\int_0^T|\eqref{C-Me1}|+|\eqref{C-Mf1}|+|\eqref{C-Mg1}|{\rm d}t\\
&\lesssim
\|w(t)\|_{L_t^\infty L_x^2}^2\|w(t)\|_{L_t^\infty\dot H_x^{\frac{1}{2}}}^2
+\int_0^T|\eqref{C-Me1}|+|\eqref{C-Mf1}|+|\eqref{C-Mg1}|{\rm d}t.
\end{split}
\end{equation*}
According to Lemma \ref{Lieb-Loss estimate} and Lemma \ref{Visan estimate},
\begin{equation}
\begin{split}
\int_0^T\iint_{\R^5\times\R^5}\frac{|w(t,x)|^2|w(t,y)|^2}{|x-y|^3}{\rm d}x{\rm d}y{\rm d}t
&\overset{\text{Lemma \ref{Lieb-Loss estimate}}}{\simeq}
\big\||\nabla|^{-1}(|w|^2)\big\|^2_{L_t^2([0,T];L_x^2(\R^5))}\\
&\overset{\text{Lemma \ref{Visan estimate}}}{\gtrsim}
\big\||\nabla|^{-\frac{1}{2}}w\big\|_{L_t^4([0,T];L_x^4(\R^5))}^4.
\end{split}
\end{equation}
Hence, 
\begin{equation}\label{e}
\begin{split}
\||\nabla|^{-\frac{1}{2}}w\|_{L_{t,x}^4}^4
&\lesssim
\int_0^T|\eqref{C-Mb1}|{\rm d}t\\
&\lesssim
\|w(t)\|_{L_t^\infty L_x^2}^2\|w(t)\|_{L_t^\infty\dot H_x^{\frac{1}{2}}}^2
+\int_0^T|\eqref{C-Me1}|+|\eqref{C-Mf1}|+|\eqref{C-Mg1}|{\rm d}t.
\end{split}
\end{equation}

Next, we estimate the terms containing \eqref{C-Me1}, \eqref{C-Mf1} and \eqref{C-Mg1}. First, consider term \eqref{C-Me1}. By H\"{o}lder's inequality, we have
\begin{equation}\label{3.6e}
\begin{split}
\int_0^T|\eqref{C-Me1}|{\rm d}t
&\lesssim
\int_0^T\left|\iint\IIm(e\bar w)(y)\frac{x-y}{|x-y|}\cdot\IIm(\nabla w\bar w)(x){\rm d}x{\rm d}y\right|{\rm d}t\\
&\lesssim
\int_0^T\left|\int\IIm(e\bar w)(y){\rm d}y\right|\sup_y\left|\int\frac{x-y}{|x-y|}\cdot\IIm(\nabla w\bar w)(x){\rm d}x\right|{\rm d}t\\
&\lesssim
\|ew\|_{L_t^1L_x^1}\|w\|^2_{L_t^\infty\dot H_x^{\frac{1}{2}}},
\end{split}    
\end{equation}
where $e=(|\cdot|^{-4}\ast|u|^2)u-(|\cdot|^{-4}\ast |w|^2)w$.
By utilising H\"{o}lder's inequality and the Hardy-Littlewood-Sobolev inequality \eqref{Hardy-Littlewood Sobolev inequality formula} for the first term on the right-hand side of \eqref{3.6e}, we obtain that
\begin{equation}\label{NLS-error}
\begin{split}
\|ew\|_{L_t^1L_x^1}
\lesssim&
\|(|\cdot|^{-4}\ast w^2)vw\|_{L_t^1L_x^1}+\|(|\cdot|^{-4}\ast wv)w^2\|_{L_t^1L_x^1}\\
&+\|(|\cdot|^{-4}\ast v^2)w^2\|_{L_t^1L_x^1}+\|(|\cdot|^{-4}\ast wv)vw\|_{L_t^1L_x^1}\\
&+\|(|\cdot|^{-4}\ast v^2)vw\|_{L_t^1L_x^1}\\
\lesssim&
\|v\|_{L_t^2L_x^5}(\|w\|_{L_t^6L_x^3}^3+\|v\|_{L_t^6L_x^3}\|w\|_{L_t^6L_x^3}^2+\|v\|_{L_t^6L_x^3}^2\|w\|_{L_t^6L_x^3})\\
\lesssim&
\|v\|_{L_t^2L_x^5}\|w\|_{L_t^6L_x^3}^3+\|v\|_{L_t^2L_x^5}\|v\|_{L_t^6L_x^3}^3.
\end{split}
\end{equation}
Using the Gagliardo-Nirenberg inequality \eqref{G-N}, 
\begin{equation}
\|w\|_{L_t^6L_x^3}^3
\lesssim
\|w\|_{L_t^\infty H^1_x}\big\||\nabla|^{-\frac{1}{2}}w\big\|_{L_{t,x}^4}^2.
\end{equation}
From above, we obtain that
\begin{equation*}
\begin{split}
\|ew\|_{L_t^1L_x^1}
\lesssim&
\|v\|_{L_t^2L_x^5}\left(\|w\|_{L_t^\infty H^1_x}\big\||\nabla|^{-\frac{1}{2}}w\big\|_{L_{t,x}^4}^2
+\|v\|_{L_t^6L_x^3}^3\right).
\end{split}
\end{equation*}
Hence,
\begin{equation}\label{e1}
\begin{split}
\int_0^T|\eqref{C-Me1}|{\rm d}t
&\lesssim
\|v\|_{L_t^2L_x^5}\left(\|w\|_{L_t^\infty H^1_x}\big\||\nabla|^{-\frac{1}{2}}w\big\|_{L_{t,x}^4}^2
+\|v\|_{L_t^6L_x^3}^3\right)\|w\|^2_{L_t^\infty\dot H_x^{\frac{1}{2}}}.
\end{split}    
\end{equation}
As for \eqref{C-Mf1},
\begin{equation}\label{error1}
\begin{split}
\int_0^T|\eqref{C-Mf1}|{\rm d}t
=
\int_0^T\left|\iint|w(y)|^2\frac{x-y}{|x-y|}\cdot (e\nabla\bar w)(x){\rm d}x{\rm d}y\right|{\rm d}t,
\end{split}
\end{equation}
considering the integration of $x$, we derive that
\begin{equation}\label{error2}
\begin{split}
&\int\frac{x-y}{|x-y|}\cdot \RRe ((|\cdot|^{-4}\ast|u|^2)u-(|\cdot|^{-4}\ast|w|^2)w)\nabla\bar w(t,x){\rm d}x\\
=&
-\int\frac{1}{|x-y|}((|\cdot|^{-4}\ast|u|^2)|u|^2-(|\cdot|^{-4}\ast|w|^2)|w|^2)(t,x){\rm d}x\\
&-\int\frac{x-y}{|x-y|}\cdot\RRe((|\cdot|^{-4}\ast|u|^2)u\nabla\bar v(t,x))
{\rm d}x.
\end{split}
\end{equation}
Therefore, by \eqref{error1}, \eqref{error2}, Hardy's inequality \eqref{Hardy} and H\"{o}lder's inequality, we have
\begin{equation*}
\begin{split}
\int_0^T|\eqref{C-Mf1}|{\rm d}t
\lesssim&
\int_0^T\left|\int((|\cdot|^{-4}\ast|u|^2)|u|^2-(|\cdot|^{-4}\ast|w|^2)|w|^2)(t,x){\rm d}x\right|\|w\|^2_{\dot H^{\frac{1}{2}}_y}{\rm d}t\\
&+
\int_0^T\int\left|(|\cdot|^{-4}\ast|u|^2)u\nabla\bar v(t,x)\right|{\rm d}x\|w\|_{L_y^2}^2{\rm d}t.
\end{split}    
\end{equation*}
Following a similar approach to the proof for estimate \eqref{e1}, the estimate for term \eqref{C-Mf1} can be derived as follows:
\begin{equation}\label{e2}
\begin{split}
\int_0^T|\eqref{C-Mf1}|{\rm d}t
\lesssim&
\|v\|_{L_t^2L_x^5}
\left(\|w\|_{L_t^\infty H^1_x}\big\||\nabla|^{-\frac{1}{2}}w\big\|_{L_{t,x}^4}^2+\|v\|_{L_t^6L_x^3}^3\right)\|w\|^2_{\dot H^{\frac{1}{2}}_y}\\
&+\|\nabla v\|_{L_t^2L_x^5}
\left(\|w\|_{L_t^\infty H^1_x}\big\||\nabla|^{-\frac{1}{2}}w\big\|_{L_{t,x}^4}^2+\|v\|_{L_t^6L_x^3}^3\right)\|w\|_{L_y^2}^2.
\end{split}    
\end{equation}
In a manner analogous to the proof of the estimate \eqref{e1} and \eqref{e2}, we get
\begin{equation}\label{e3}
\begin{split}
\int_0^T|\eqref{C-Mg1}|{\rm d}t
\lesssim
\|v\|_{L_t^2L_x^5}
\left(\|w\|_{L_t^\infty H^1_x}\big\||\nabla|^{-\frac{1}{2}}w\big\|_{L_{t,x}^4}^2+\|v\|_{L_t^6L_x^3}^3\right)\|w\|^2_{\dot H^{\frac{1}{2}}_y}.
\end{split}    
\end{equation}
Combining \eqref{e}, \eqref{e1}, \eqref{e2} and \eqref{e3}, we arrive that
\begin{equation*}
\begin{split}
\||\nabla|^{-\frac{1}{2}}w\|_{L_{t,x}^4}^4
\lesssim
&\|w(t)\|_{L_t^\infty L_x^2}^2\|w(t)\|_{L_t^\infty\dot H_x^{\frac{1}{2}}}^2\\
&+\|v\|_{L_t^2L_x^5}
\left(\|w\|_{L_t^\infty H^1_x}\big\||\nabla|^{-\frac{1}{2}}w\big\|_{L_{t,x}^4}^2+\|v\|_{L_t^6L_x^3}^3\right)\|w\|^2_{L_t^\infty\dot H^{\frac{1}{2}}_y}\\
&+\|\nabla v\|_{L_t^2L_x^5}
\left(\|w\|_{L_t^\infty H^1_x}\big\||\nabla|^{-\frac{1}{2}}w\big\|_{L_{t,x}^4}^2+\|v\|_{L_t^6L_x^3}^3\right)\|w\|_{L_t^\infty L_y^2}^2.
\end{split} 
\end{equation*}
This completes the proof of this proposition.
\end{proof}

\begin{remark}\label{d>5 cannot}
For $d>5$, we consider the term $\|(|\cdot|^{-4}\ast w^2)vw\|_{L_t^1L_x^1}$ in \eqref{NLS-error}. According to H\"{o}lder's inequality and the Hardy-Littlewood-Sobolev inequality, we find 
\begin{equation}\label{d>5 error}
\|(|\cdot|^{-4}\ast w^2)vw\|_{L_t^1L_x^1}
\lesssim
\|v\|_{L_t^{\frac{d+1}{d-2}}L_x^{\frac{2d(d+1)}{d^2-d-8}}}\|w\|_{L_t^{d+1}L_x^{\frac{2(d+1)}{d-1}}}^3,
\end{equation}
where the choice of the norms depends on the  Gagliardo-Nirenberg inequality. Indeed, using Lemma \ref{Lieb-Loss estimate}, Lemma \ref{Visan estimate} and the  Gagliardo-Nirenberg inequality \eqref{G-N}, we obtain
\begin{equation}
\|w\|_{L_t^aL_x^b}\lesssim\|w\|_{L_t^\infty H_x^1}^{1-\theta}\||\nabla|^{-\frac{d-3}{4}}w\|_{L_{t,x}^4}^{\theta}   
\end{equation}
for $0<\theta<1$. Here,
\begin{equation*}
\begin{split}
&\frac{1}{a}=\frac{\theta}{4},\ \frac{1}{b}=\frac{1-\theta}{2}+\frac{\theta}{4},\\    
&(1-\theta)+\theta\left(-\frac{d-3}{4}\right)=0.
\end{split}
\end{equation*}
Hence, we get
\begin{equation}
\theta=\frac{4}{d+1},\ a=d+1,\ b=\frac{2(d+1)}{d-1}.
\end{equation}
However, for $d>5$ it cannot be obtained that \eqref{d>5 error} is bounded in the stochastic sense, since random Strichartz estimate \eqref{random estimate 1} requires $b=\frac{2(d+1)}{d-1}\geq 2$, which implies $d\leq 5.$
\end{remark}

\section{Local well-posedness and stability theories}\label{section 4}

This section is devoted to the proof of the local well-posedness and stability theories for the equation 
\begin{equation}\label{NLS-hartree-w1}
\begin{cases}
i\pa_tw+\Delta w=(|\cdot|^{-4}\ast|w|^2)w+e,\\
w(0,x)=w_0(x),
\end{cases}
\end{equation}
where $e=(|\cdot|^{-4}\ast|u|^2)u-(|\cdot|^{-4}\ast|w|^2)w$, and $w$ satisfies \eqref{v0 w0} and \eqref{v w}. Recall the definitions of $X$-norm:
\begin{equation}\label{X norm}
\|f\|_{X(I)}
\coloneqq
\|\langle\nabla\rangle f\|_{L_t^2(I;L_x^{\frac{10}{3}}(\R^5))}+\|f\|_{L_t^4(I;L_x^4(\R^5))}+\|f\|_{L_t^4(I;L_x^5(\R^5))}+\|f\|_{L_t^3(I;L_x^6(\R^5))},
\end{equation}
and $Y$-norm:
\begin{equation}\label{Y norm}
\begin{split}
\|f\|_{Y(I)}
\coloneqq
&\|\langle\nabla\rangle^{s+\frac{1}{2}a}f\|_{L_t^2(I;L_x^5(\R^5))}+\|f\|_{L_t^4(I;L_x^5(\R^5))}+\|f\|_{L_t^4(I;L_x^4(\R^5))}\\
&+\|f\|_{L_t^6(I;L_x^3(\R^5))}.
\end{split}
\end{equation}


\subsection{Local theory}

\begin{lemma}\label{Local well-posedness}
Let $a\in\mathbb{N}$ satisfy \eqref{parameter a}, $v\in Y(\R)$ and $w_0\in H_x^1(\R^5)$. Then, there exists a unique solution $w$ of \eqref{NLS-hartree-w1} in 
$C([0,T];H_x^1(\R^5))\cap X([0,T]),$
where $T>0$ depending on $a$, $w_0$ and $v$.
\end{lemma}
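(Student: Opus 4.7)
The plan is a standard Picard contraction argument for the Duhamel map
\begin{equation*}
\Phi(w)(t) := e^{it\Delta}w_0 - i\int_0^t e^{i(t-\tau)\Delta}\bigl[(|\cdot|^{-4}\ast|w|^2)w + e(v,w)\bigr](\tau)\,d\tau,
\end{equation*}
where, expanding $e = F(v+w)-F(w)$ with $F(u)=(V\ast|u|^2)u$ and $V=|\cdot|^{-4}$, one gets five trilinear pieces, each carrying at least one factor of $v$:
\begin{equation*}
e = (V\ast|v|^2)v + 2(V\ast\RRe(v\bar w))v + (V\ast|w|^2)v + (V\ast|v|^2)w + 2(V\ast\RRe(v\bar w))w.
\end{equation*}
Setting $R := \|w_0\|_{H_x^1}$, I would look for a fixed point of $\Phi$ in the ball
$B_{\delta,T} := \{w\in C([0,T];H_x^1) : \|w\|_{L_t^\infty H_x^1}\leq 2CR,\ \|w\|_{X([0,T])}\leq 2C\delta\}$
for suitable $\delta,T>0$ to be chosen.

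Second, I would plug into the Strichartz estimate (Lemma \ref{classical Strichatz}) with the admissible pair $(2,10/3)$ and its dual $(2,10/7)$, which yields
\begin{equation*}
\|\Phi(w)\|_{L_t^\infty H_x^1 \cap X([0,T])} \leq C_0 R + C_0\bigl\|\langle\nabla\rangle\bigl[(V\ast|w|^2)w + e\bigr]\bigr\|_{L_t^2 L_x^{10/7}([0,T])}.
\end{equation*}
The cubic in $w$ is handled by Hardy--Littlewood--Sobolev (Lemma \ref{Hardy-Littlewood Sobolev inequality}): $\|V\ast|w|^2\|_{L_x^{5/2}}\lesssim\|w\|_{L_x^{10/3}}^2$, followed by H\"older and the Sobolev embedding $H_x^1\hookrightarrow L_x^{10/3}$, giving
\begin{equation*}
\bigl\|\langle\nabla\rangle F(w)\bigr\|_{L_t^2 L_x^{10/7}}\lesssim \|w\|_{L_t^\infty H_x^1}^2 \|\langle\nabla\rangle w\|_{L_t^2 L_x^{10/3}}\lesssim R^2\|w\|_{X([0,T])}.
\end{equation*}
For each term of $e$ I would distribute the factors between $X$ (for $w$'s) and $Y$ (for $v$'s). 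A representative computation is
$\|(V\ast|v|^2)\nabla v\|_{L_t^2 L_x^{10/7}}\lesssim \|v\|_{L_t^\infty L_x^{10/3}}^2\|\langle\nabla\rangle v\|_{L_t^2 L_x^5}$;
note that $s+\frac{1}{2}a\geq 1$ under \eqref{parameter a}, so $\langle\nabla\rangle^{1-(s+a/2)}$ is an $L^5$-bounded Fourier multiplier and hence $\|\langle\nabla\rangle v\|_{L_t^2 L_x^5}\lesssim\|\langle\nabla\rangle^{s+a/2}v\|_{L_t^2 L_x^5}\leq\|v\|_{Y}$. The remaining mixed terms of $e$ are controlled analogously, placing one or two factors of $v$ in $L_t^{4}L_x^4\cap L_t^4 L_x^5\cap L_t^6 L_x^3$ and putting $w$-factors into appropriate components of $X$.

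Third, since $v\in Y(\R)$, each time-integrated norm of $v$ on $[0,T]$ (including $\|\langle\nabla\rangle^{s+a/2}v\|_{L_t^2([0,T])L_x^5}$) vanishes as $T\to 0$ by absolute continuity. Thus I first choose $\delta$ small so that $C_0(2CR)^2\cdot 2C\delta\leq\tfrac{1}{2}\cdot 2C\delta$, absorbing the $w$-cubic term into the ball, and then choose $T=T(a,w_0,v)>0$ so small that every time-norm of $v$ on $[0,T]$ is at most $\delta$. A parallel estimate for $\Phi(w_1)-\Phi(w_2)$, again cubic in the sum of norms and at least linear in a small $v$- or $w$-norm, gives a Lipschitz constant $\leq \tfrac{1}{2}$. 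Banach's fixed-point theorem then produces the unique solution in $B_{\delta,T}$.

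The main obstacle I anticipate is managing the derivative bookkeeping for those pieces of $e$ that have $\langle\nabla\rangle$ hitting a factor of $v$ in the low-regularity regime $s<0$: the $Y$-norm only supplies the high-regularity bound $\|\langle\nabla\rangle^{s+a/2}v\|_{L_t^2 L_x^5}$, not a clean $\|\langle\nabla\rangle v\|$ bound at an arbitrary Lebesgue exponent, so one must carefully exploit that $a$ is taken large (condition \eqref{parameter a}) so that $s+\tfrac{1}{2}a\geq 1$ and dyadic $L^5$-boundedness of $\langle\nabla\rangle^{1-(s+a/2)}$ applies. This is precisely the role played by the large-$a$ choice in the randomization and is the reason (1.19) is imposed; the rest is routine Hardy--Littlewood--Sobolev and H\"older applied carefully to the five pieces of $e$.
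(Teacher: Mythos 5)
Your overall strategy --- Banach fixed point for the Duhamel map in the ball $B_{\delta,T}$, Strichartz plus Hardy--Littlewood--Sobolev plus H\"older for the trilinear terms, smallness obtained by shrinking $T$, and the observation that $s+\tfrac12 a>1$ under \eqref{parameter a} so that $\|\langle\nabla\rangle v\|_{L_t^2L_x^5}\lesssim\|v\|_{Y}$ --- is exactly the paper's. However, there is a genuine gap in how you distribute the H\"older exponents in the cubic-in-$w$ term, and it breaks the contraction for large data. You estimate
$\|\langle\nabla\rangle F(w)\|_{L_t^2L_x^{10/7}}\lesssim\|w\|_{L_t^\infty H_x^1}^2\|\langle\nabla\rangle w\|_{L_t^2L_x^{10/3}}\lesssim R^2\|w\|_{X}$,
placing two factors in $L_t^\infty H_x^1$, which is of size $R$ and is \emph{not} small on short time intervals. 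Your stated smallness condition $C_0(2CR)^2\cdot 2C\delta\leq\tfrac12\cdot 2C\delta$ is equivalent to $C_0(2CR)^2\leq\tfrac12$: the $\delta$'s cancel, so this is a constraint on $R$, not something you can arrange by choosing $\delta$. Concretely, with your bound the $X$-component of the self-map gives $\|\Phi(w)\|_X\leq\|e^{it\Delta}w_0\|_X+CR^2\delta$, which cannot be forced below $2C\delta$ when $R$ is large, and the difference estimate gives a Lipschitz constant of order $R^2$, which is not $<1$. The fix is the one the paper uses: put \emph{all three} factors into norms that are small on $[0,T]$, e.g.
$\|(V\ast|w|^2)\nabla w\|_{L_t^1L_x^2}\lesssim\|V\ast|w|^2\|_{L_t^2L_x^5}\|\nabla w\|_{L_t^2L_x^{10/3}}\lesssim\|w\|_{L_t^4L_x^5}^2\|\nabla w\|_{L_t^2L_x^{10/3}}\lesssim\|w\|_X^3\lesssim\delta^3$,
and similarly $\lesssim\delta^2\|w_1-w_2\|_X$ for differences; then $\delta$ small closes everything independently of $R$.

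Two smaller points. First, you shrink $T$ only to make the $v$-norms small; you also need $\|e^{it\Delta}w_0\|_{X([0,T])}\leq\delta$ (available by dominated convergence since the global $X$-norm of the free flow is finite), otherwise $\Phi$ does not map the $X$-ball into itself. Second, your representative computation $\|(V\ast|v|^2)\nabla v\|_{L_t^2L_x^{10/7}}\lesssim\|v\|_{L_t^\infty L_x^{10/3}}^2\|\langle\nabla\rangle v\|_{L_t^2L_x^5}$ does not work as written: the spatial exponents do not add up ($\tfrac25+\tfrac15\neq\tfrac{7}{10}$ after HLS), and $\|v\|_{L_t^\infty L_x^{10/3}}$ is not controlled by the $Y$-norm, which has no $L_t^\infty$ component. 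The paper's corresponding estimate pairs $\|\nabla v\|_{L_t^2L_x^5}$ with $\|u\|_{L_{t,x}^4}^2$, both of which are components of $X$ and $Y$.
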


\begin{remark}
In Lemma \ref{Local well-posedness}, we assumes that $v\in Y(\R)$ and $w_0\in H_x^1(\R^5)$. There is no guarantee that $v\in Y(\R)$ and $w_0\in H_x^1(\R^5)$ are fulfilled for equation \eqref{NLS-hartree-w1} with the initial value $u_0\in H^s_x(\R^5)$ with $s<0$ in the deterministic sense. Nevertheless, they are satisfied in the stochastic sense.
\end{remark}

\begin{proof}[Proof of Lemma \ref{Local well-posedness}]
For simplicity, we always assume that $(t,x)\in[0,T]\times\R^5$. Take the solution map as 
\begin{equation*}
\Phi(w)=e^{it\Delta}w_0-i\int_0^t e^{i(t-s)\Delta}[(|\cdot|^{-4}\ast|u|^2)u(s)]{\rm d}s,
\end{equation*}
which is contraction on the space
\begin{equation}
B_{\delta,T}\coloneqq\{w\in C([0,T];H_x^1):\|w\|_{L_t^\infty H_x^1}\leq2CR,\ \|w\|_{X}\leq 2C\delta\}.
\end{equation}
Here $R>0$ be given such that $\|e^{it\Delta}w_0\|_{L_t^\infty H_x^1\cap X}\leq CR$. We also take the $T>0$ sufficient small such that 
\begin{equation}
\|e^{it\Delta}w_0\|_{X([0,T])}+\|v\|_{Y([0,T])}\leq\delta
\end{equation}
with $\delta>0$ be chosen later.

By the Strichartz estimate,
\begin{equation*}
\begin{split}
\|\Phi(w)\|_{L_t^\infty H_x^1}
&\leq C\|w_0\|_{H_x^1}+C\|\nabla[(|\cdot|^{-4}\ast|u|^2)u]\|_{L_t^1L_x^2}+C\|(|\cdot|^{-4}\ast|u|^2)u\|_{L_t^1L_x^2}.
\end{split}
\end{equation*}
Note that
\begin{equation}\label{proof local w-p 1}
\begin{split}
\|\nabla[(|\cdot|^{-4}\ast|u|^2)u]\|_{L_t^1L_x^2}
&\lesssim
\|(|\cdot|^{-4}\ast(\nabla u \bar u))u\|_{L_t^1L_x^2}+\|(|\cdot|^{-4}\ast|u|^2)\nabla u\|_{L_t^1L_x^2}.
\end{split}
\end{equation}
For the first term of \eqref{proof local w-p 1}, by H\"{o}lder's inequality and the Hardy-Littlewood-Sobolev inequality, we find
\begin{equation*}
\begin{split}
\|(|\cdot|^{-4}\ast&(\nabla u \bar u))u\|_{L_t^1L_x^2}\\
\lesssim&
\||\cdot|^{-4}\ast(\nabla w u)\|_{L_t^\frac{4}{3} L_x^\frac{10}{3}}\|u\|_{L_t^4 L_x^5}+\||\cdot|^{-4}\ast(\nabla v u)\|_{L_t^{\frac{4}{3}} L_x^4}\|u\|_{L_{t,x}^4}\\
\lesssim&
\|\nabla w u\|_{L_t^\frac{4}{3} L_x^2}\|u\|_{L_t^4 L_x^5}+\|\nabla v u\|_{L_t^{\frac{4}{3}} L_x^{\frac{20}{9}}}\|u\|_{L_{t,x}^4}\\
\lesssim&
\|\nabla w\|_{L_t^2 L_x^\frac{10}{3}}\|u\|^2_{L_t^4 L_x^5}+\|\nabla v \|_{L_t^2 L_x^5}\|u\|_{L_{t,x}^4}^2\\
\lesssim&
\|w\|_{X}(\|w\|_{X}+\|v\|_{Y})^2+\|\nabla v \|_{L_t^2 L_x^5}(\|w\|_{X}+\|v\|_{Y})^2.
\end{split}
\end{equation*}
Since $\|\nabla v\|_{L_t^2L_x^5}\lesssim\|v\|_Y\lesssim\delta,$ and $w\in B_{\delta,T}$,
\begin{equation*}
\begin{split}
\|(|\cdot|^{-4}\ast(\nabla u u))u\|_{L_t^1L_x^2}
\lesssim&
\delta^3.
\end{split}
\end{equation*}
The proof for the first term of \eqref{proof local w-p 1} can be followed to get the same bound for the second term of \eqref{proof local w-p 1}, as follows.
\begin{equation*}
\begin{split}
\|(|\cdot|^{-4}\ast|u|^2)\nabla u\|_{L_t^1L_x^2}
\lesssim
\delta^3.
\end{split}
\end{equation*}
Similarly,
\begin{equation*}
\|(|\cdot|^{-4}\ast|u|^2)u\|_{L_t^1L_x^2}
\lesssim
\|w\|_{L_t^2 L_x^\frac{10}{3}}\|u\|^2_{L_t^4 L_x^5}+\|v \|_{L_t^2 L_x^5}\|u\|_{L_{t,x}^4}^2\lesssim\delta^3.
\end{equation*}
Therefore,
\begin{equation*}
\begin{split}
\|\Phi(w)\|_{L_t^\infty H_x^1}
&\leq CR+C\delta^3.
\end{split}
\end{equation*}
Similar as above, we get
\begin{equation*}
\begin{split}
\|\Phi(w)\|_{X}
\leq
&\|e^{it\Delta}w_0\|_{X}+
C\|w\|_{X}(\|w\|_{X}+\|v\|_{L_t^4 L_x^{5}})^2+C\|\nabla v \|_{L_t^2 L_x^5}(\|w\|_{X}+\|v\|_{L_{t,x}^4})^2\\
\leq
&\delta+C\delta^3.
\end{split}
\end{equation*}
Choosing $\delta$ sufficient small such that $\delta^3\leq\min\{R,\delta\}.$ From this, we prove that $\Phi$ maps $B_{\delta,T}$ into itself. By repeating the above steps, we obtain
\begin{equation*}
\begin{split}
\|\Phi(w_1)-\Phi(w_2)\|_{L_t^\infty H_x^1\cap X}
\leq
&C\|\langle\nabla\rangle[(|\cdot|^{-4}\ast|v+w_1|^2)(w_1-w_2)]\|_{L_t^1 L_x^2}\\
&+C\|\langle\nabla\rangle[(|\cdot|^{-4}\ast(w_1-w_2)^2)(v+w_2)]\|_{L_t^1 L_x^2}\\
&+C\|\langle\nabla\rangle[(|\cdot|^{-4}\ast((v+w_2)(w_1-w_2)))(v+w_2)]\|_{L_t^1 L_x^2}\\
\triangleq&
\uppercase\expandafter{\romannumeral 1}_1
+\uppercase\expandafter{\romannumeral 1}_2
+\uppercase\expandafter{\romannumeral 1}_3,
\end{split}
\end{equation*}
where $w_1,w_2\in B_{\delta,T}$. 
For the first term $\uppercase\expandafter{\romannumeral 1}_1:$
\begin{equation}\label{proof local w-p 2}
\begin{split}
\uppercase\expandafter{\romannumeral 1}_1
\leq&
C\|[|\cdot|^{-4}\ast(\langle\nabla\rangle(v+w_1)(v+w_1))](w_1-w_2)\|_{L_t^1L_x^2}\\
&+C\|(|\cdot|^{-4}\ast|v+w_1|^2)\langle\nabla\rangle(w_1-w_2)\|_{L_t^1L_x^2}\\
\triangleq&
\uppercase\expandafter{\romannumeral 2}_1
+\uppercase\expandafter{\romannumeral 2}_2.
\end{split}
\end{equation}
As for $\uppercase\expandafter{\romannumeral 2}_1$, we have
\begin{equation}\label{proof local w-p 3}
\begin{split}
\uppercase\expandafter{\romannumeral 2}_1
\lesssim&
\||\cdot|^{-4}\ast(\langle\nabla\rangle w_1(v+w_1))\|_{L_t^{\frac{4}{3}}L_x^{\frac{10}{3}}}\|w_1-w_2\|_{L_t^4L_x^{5}}\\
&+\||\cdot|^{-4}\ast(\langle\nabla\rangle v(v+w_1))\|_{L_t^{\frac{4}{3}}L_x^4}\|w_1-w_2\|_{L_{t,x}^4}\\
\lesssim&
\|w_1-w_2\|_X(\|\langle\nabla\rangle w_1\|_{L_t^2 L_x^{\frac{10}{3}}}\|v+w_1\|_{L_t^4L_x^5}+\|\langle\nabla\rangle v\|_{L_t^2L_x^5}\|v+w_1\|_{L_{t,x}^4})\\
\lesssim&
\|w_1-w_2\|_X[\|w_1\|_X(\|v\|_Y+\|w_1\|_X)+\|v\|_Y(\|v\|_Y+\|w_1\|_X)]\\
\lesssim&
\delta^2\|w_1-w_2\|_X.
\end{split}
\end{equation}
As for $\uppercase\expandafter{\romannumeral 2}_2$, we obtain that
\begin{equation}\label{proof local w-p 4}
\begin{split}
\uppercase\expandafter{\romannumeral 2}_2
\lesssim&
\|\langle\nabla\rangle(w_1-w_2)\|_{L_t^2L_x^{\frac{10}{3}}}\||\cdot|^{-4}\ast(|v+w_1|^2)\|_{L_t^2L_x^5}\\
\lesssim&
\|\langle\nabla\rangle(w_1-w_2)\|_{L_t^2L_x^{\frac{10}{3}}}\||v+w_1|^2\|_{L_t^2L_x^{\frac{5}{2}}}\\
\lesssim&
\|\langle\nabla\rangle(w_1-w_2)\|_{L_t^2L_x^{\frac{10}{3}}}\|v+w_1\|^2_{L_t^4L_x^5}\\
\lesssim&
\|w_1-w_2\|_X(\|v\|_Y+\|w\|_X)^2\\
\lesssim&
\delta^2\|w_1-w_2\|_X.
\end{split}
\end{equation}
Combining estimates \eqref{proof local w-p 2}, \eqref{proof local w-p 3} and \eqref{proof local w-p 4}, we find that
\begin{equation*}
\uppercase\expandafter{\romannumeral 1}_1
\leq C\delta^2\|w_1-w_2\|_X.
\end{equation*}
We also derive that
\begin{equation*}
\uppercase\expandafter{\romannumeral 1}_2
\leq C\delta^2\|w_1-w_2\|_X,
\quad\text{and}\quad
\uppercase\expandafter{\romannumeral 1}_3
\leq C\delta^2\|w_1-w_2\|_X.
\end{equation*}
Hence,
\begin{equation*}
\begin{split}
\|\Phi(w_1)-\Phi(w_2)\|_{L_t^\infty H_x^1\cap X}
\leq
C\delta^2\|w_1-w_2\|_X
\leq C\delta^2\|w_1-w_2\|_{L_t^\infty H_x^1\cap X}.
\end{split}
\end{equation*}
Choosing $\delta$ sufficient small such $C\delta^2<\frac{1}{2}$, which shows that $\Phi$ is a contraction mapping on $B_{\delta,T}.$ This completes the proof of Lemma \ref{Local well-posedness}.
\end{proof}

\subsection{Stability theories} 
Finally, the stability results for equation \eqref{NLS-hartree-w1} are given in the following.



\begin{lemma}[Short time stability]\label{short stability}
Let $a\in\mathbb{N}$ satisfy \eqref{parameter a}, $w_0\in H_x^1(\R^5)$, $I\subseteq\R$ be a compact time interval, and $0\in I$. Let $w$ be the solution defined on $I\times\R^5$ of the perturbed equation \eqref{NLS-hartree-w1}.
Let $\tilde w_0\in H_x^1(\R^5)$ and let $\tilde w$ be the solution of the defocusing  energy-critical equation on $I\times\R^5$:
\begin{equation}\label{tilde w}
\begin{cases}
i\pa_t\tilde w+\Delta\tilde w=(|\cdot|^{-4}\ast|\tilde w|^2)\tilde w,\\
\tilde w(0,x)=\tilde w_0.
\end{cases}
\end{equation}

Then, for $\e_0>0$ sufficiently small such that if $0<\e\leq\e_0$ and 
\begin{align}
\|\tilde w\|_{X(I)}\leq \e_0,\  
\|w_0-\tilde w_0\|_{H_x^1(\R^5)}&\leq\e_0,\label{short stability1} \\
\|v\|_{Y(I)}&\leq\e, \label{short stability2}
\end{align}
there exists $C\geq 0$ such that
\begin{equation}\label{short stability result}
\|w-\tilde w\|_{L_t^\infty(I;H_x^1(\R^5))}+\|w-\tilde w\|_{X(I)}\leq C\e.
\end{equation}
\end{lemma}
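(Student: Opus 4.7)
Setting $r := w - \tilde w$ and $F(\phi):=(|\cdot|^{-4}\ast|\phi|^2)\phi$, the function $r$ solves
\begin{equation*}
\begin{cases}
i\pa_t r + \Delta r = \bigl[F(w)-F(\tilde w)\bigr] + e,\\
r(0,x) = w_0 - \tilde w_0,
\end{cases}
\end{equation*}
with $e = F(v+w) - F(w)$. The plan is a Strichartz-plus-bootstrap argument in the same spirit as the contraction step of Lemma \ref{Local well-posedness}, treating $F(w)-F(\tilde w)$ as a ``linearised'' nonlinearity acting on $r$ and $e$ as a small forcing driven by $v$.

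First I apply Lemma \ref{classical Strichatz} on the pair $(\infty,2)$ against its dual $(1,2)$, together with the other Schr\"odinger-admissible pairs implicit in $X$, to obtain
\begin{equation*}
\|r\|_{L^\infty_t(I;H^1_x)}+\|r\|_{X(I)}\le C\|w_0-\tilde w_0\|_{H^1_x} + C\bigl\|\langle\nabla\rangle[F(w)-F(\tilde w)]\bigr\|_{L^1_t(I;L^2_x)} + C\|\langle\nabla\rangle e\|_{L^1_t(I;L^2_x)}.
\end{equation*}
The difference $F(w)-F(\tilde w)$ expands into cubic monomials, each containing at least one factor of $r$ and two factors from $\{w,\tilde w\}$. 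Distributing the derivative by the product rule and estimating each monomial by H\"older and the Hardy--Littlewood--Sobolev inequality (Lemma \ref{Hardy-Littlewood Sobolev inequality})---exactly as in the contraction estimates of Lemma \ref{Local well-posedness}---yields
\begin{equation*}
\bigl\|\langle\nabla\rangle[F(w)-F(\tilde w)]\bigr\|_{L^1_t L^2_x}\lesssim \bigl(\|w\|_{X(I)}+\|\tilde w\|_{X(I)}\bigr)^2\|r\|_{X(I)}.
\end{equation*}

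For the error term, every summand of $e = F(v+w)-F(w)$ contains at least one factor of $v$. Pairing those $v$-factors with the $L^2_tL^5_x$-part of the $Y$-norm (which controls both $v$ and $\nabla v$ at the required regularity, see \eqref{Y norm}) and the remaining factors with $X$-components produces
\begin{equation*}
\|\langle\nabla\rangle e\|_{L^1_t L^2_x}\lesssim \|v\|_{Y(I)}\bigl(\|v\|_{Y(I)}+\|w\|_{X(I)}\bigr)^2.
\end{equation*}
Writing $w=\tilde w+r$ and inserting the hypotheses $\|\tilde w\|_{X(I)}\le\e_0$ and $\|v\|_{Y(I)}\le\e$, together with the Strichartz control of $\|\tilde w\|_{L^\infty H^1_x}$ coming from \eqref{tilde w} and the local theory, the three estimates above combine to an inequality of the form
\begin{equation*}
\|r\|_{L^\infty_t H^1_x}+\|r\|_{X(I)}\le C(\e+\e_0) + C\bigl(\e_0 + \|r\|_{X(I)}\bigr)^2\|r\|_{X(I)}.
\end{equation*}
A standard continuity argument in the upper endpoint of $I$---starting from $t=0$ where $\|r\|_{X([0,0])}=0$---closes the bootstrap once $\e_0$ is sufficiently small, giving \eqref{short stability result}.

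The main obstacle is the bookkeeping for the nonlocal kernel $|x|^{-4}$: in each cubic monomial of $F(w)-F(\tilde w)$ and of $e$, one must carefully choose which factor carries the gradient and then select H\"older exponents so that the convolution can be absorbed via HLS while the surviving norms match the $X$- and $Y$-norm components on the right. This is the same technical core as in the identity \eqref{NLS-error} and in the contraction step of Lemma \ref{Local well-posedness}; no new analytic ingredient is needed, but the several monomial pieces have to be handled individually.
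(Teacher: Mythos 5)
Your proposal is correct and follows essentially the same route as the paper: set $g=w-\tilde w$, apply Strichartz to the Duhamel formula, estimate the cubic monomials by H\"older and Hardy--Littlewood--Sobolev exactly as in the contraction step of Lemma \ref{Local well-posedness} (pairing $v$-factors with $Y$-norm components and $w,\tilde w$-factors with $X$-norm components), and close with a continuity argument for small $\e_0$. The only cosmetic difference is that you split the forcing as $[F(w)-F(\tilde w)]+e$ while the paper estimates $F(g+\tilde w+v)-F(\tilde w)$ in one piece; the resulting monomials and bounds are identical.
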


\begin{proof}
Set $g(t,x)\coloneqq w(t,x)-\tilde w(t,x)$. Then $g$ satisfies
\begin{equation}
\begin{cases}
i\pa_t g+\Delta g=F(g+\tilde w+v)-F(\tilde w),\\
g(0)=w_0-\tilde w_0,
\end{cases}
\end{equation}
where $F(g)=(|\cdot|^{-4}\ast|g|^2)g$.
To prove the inequality \eqref{short stability result}, it is sufficient to prove 
\begin{equation}
\|g\|_{L_t^\infty(I;H_x^1(\R^5))}+\|g\|_{X(I)}
\leq C\e.
\end{equation}
By the Strichartz estimates and the Duhamel formula
\begin{equation*}
g(t,x)=e^{it\Delta}g(0)-i\int_0^t e^{i(t-s)\Delta}\left(F(g+\tilde w+v)-F(\tilde w)\right)(s){\rm d}s,
\end{equation*}
we obtain that
\begin{equation*}
\|g\|_{L_t^\infty H_x^1}+\|g\|_{X}
\lesssim
\|g(0)\|_{H_x^1}+\|\langle\nabla\rangle\left[F(g+\tilde w+v)-F(\tilde w)\right]\|_{L_t^1L_x^2}.
\end{equation*}
Following the argument as in Lemma \ref{Local well-posedness}, and using estimates \eqref{short stability1} and \eqref{short stability2}, we obtain
\begin{equation*}
\begin{split}
\|g\|_{L_t^\infty H_x^1}+\|g\|_{X}
&\lesssim
\|g(0)\|_{H_x^1}+\|g\|_X^3+\|g\|_X(\|v\|_Y^2+\|\tilde w\|_X^2)+\|v\|_Y^3+\|v\|_Y\|\tilde w\|_X^2\\
&\lesssim
(\e_0+\e\e_0^2+\e^3)+\e_0^2\|g\|_X+\|g\|_X^3.
\end{split}
\end{equation*}
Choosing $\e_0$ sufficiently small, the continuity argument shows that $\|g\|_X\lesssim\e$ for all $t\in I$. And we obtain that
\begin{equation*}
\|g\|_{L_t^\infty(I;H_x^1(\R^5))}+\|g\|_{X(I)}
\leq C\e,
\end{equation*}
which completes the proof of this lemma.
\end{proof}

\begin{lemma}[Long time stability]\label{Long stability}
Let $a\in\mathbb{N}$ satisfy \eqref{parameter a},  $w_0\in H_x^1(\R^5)$, $I\subseteq\R$ be a compact time interval, $0\in I$ and $M>0$. Let $w$ be a solution defined on $I\times\R^5$ of the perturbed equation \eqref{NLS-hartree-w1}.
Let $\tilde w_0\in H_x^1(\R^5)$ and let $\tilde w$ be the solution of the defocusing energy-critical equation \eqref{tilde w}

Then, for $\e>0$ sufficiently small, $M>0$ such that 
\begin{align} \label{Long stability1}
\|\tilde w\|_{X(I)}\leq M,
\|w_0-\tilde w_0\|_{H_x^1(\R^5)}&\leq\e,
\end{align}
and there exists $\e_1>0$ such that $v$ satisfies
\begin{align}
\|v\|_{Y(I)}&\leq\e_1, \label{Long stability2}
\end{align}
then the following holds:
\begin{equation}\label{long stability result}
\|w-\tilde w\|_{L_t^\infty(I;H_x^1(\R^5))}+\|w-\tilde w\|_{X(I)}\leq C_M\e_1,
\end{equation}
where $C_M$ is the constant associated with $M$.
\end{lemma}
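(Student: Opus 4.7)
The plan is to reduce Lemma \ref{Long stability} to the short-time stability result (Lemma \ref{short stability}) by partitioning the interval $I$ into finitely many subintervals on which $\tilde w$ is small in the $X$-norm. Since $\|\tilde w\|_{X(I)}\leq M$ and the $X$-norm is built from Lebesgue-type space-time norms of finite exponents, there exists an integer $J=J(M,\e_0)$ and a partition $I=\bigcup_{j=1}^{J}I_j$ with consecutive subintervals $I_j=[t_{j-1},t_j]$ such that
\begin{equation*}
\|\tilde w\|_{X(I_j)}\leq \tfrac{1}{2}\e_0\quad\text{for every }j=1,\ldots,J,
\end{equation*}
where $\e_0$ is the small parameter furnished by Lemma \ref{short stability}. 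Since $\|v\|_{Y(I)}\leq\e_1$ automatically implies $\|v\|_{Y(I_j)}\leq\e_1$ for each $j$, the short-time hypothesis \eqref{short stability2} will be met on every piece provided $\e_1\leq\e_0$.

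Next I would run an induction on $j$, verifying the hypothesis \eqref{short stability1} at time $t_{j-1}$ and concluding the estimate \eqref{short stability result} on $I_j$. At the first step $j=1$, \eqref{Long stability1} directly gives $\|w(0)-\tilde w(0)\|_{H^1_x}\leq\e\leq\e_0$, so Lemma \ref{short stability} applies and yields
\begin{equation*}
\|w-\tilde w\|_{L^\infty_t(I_1;H^1_x)}+\|w-\tilde w\|_{X(I_1)}\leq C\e_1,
\end{equation*}
where $C$ is the universal constant from Lemma \ref{short stability}. For the inductive step I would use the preceding bound to estimate
\begin{equation*}
\|w(t_{j-1})-\tilde w(t_{j-1})\|_{H^1_x}\leq C^{\,j-1}\e_1,
\end{equation*}
and then invoke Lemma \ref{short stability} on $I_j$ with $\tilde w(t_{j-1})$ playing the role of $\tilde w_0$; this requires $C^{\,j-1}\e_1\leq\e_0$, which will be ensured at the end by choosing $\e_1$ small. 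Summing the resulting $J$ estimates (and using that consecutive $H^1_x$ endpoints agree) produces
\begin{equation*}
\|w-\tilde w\|_{L^\infty_t(I;H^1_x)}+\|w-\tilde w\|_{X(I)}\leq \sum_{j=1}^{J}C^{\,j}\e_1\leq 2C^{\,J}\e_1.
\end{equation*}

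Finally, I would fix the parameters in the right order: given $M$, determine $J=J(M,\e_0)$ from the subdivision step, then set $\e_1:=\min\{\e_0,\,C^{-J}\e_0\}$ so that the inductive hypothesis $C^{\,j-1}\e_1\leq\e_0$ holds for all $j\leq J$, and define $C_M:=2C^{\,J}$, which produces the claimed bound \eqref{long stability result}. The main obstacle I anticipate is the subdivision step: to find a partition into pieces on which $\|\tilde w\|_{X(I_j)}\leq\tfrac{1}{2}\e_0$ one needs each constituent norm of the $X$-norm (namely $\|\langle\nabla\rangle\tilde w\|_{L^2_tL^{10/3}_x}$, $\|\tilde w\|_{L^4_tL^4_x}$, $\|\tilde w\|_{L^4_tL^5_x}$, $\|\tilde w\|_{L^3_tL^6_x}$) to be split separately by absolute continuity of the Lebesgue integral, and one must check that the total number $J$ depends only on $M$ and $\e_0$. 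A minor additional point is to verify that the short-time constant $C$ is uniform (independent of the interval) so that the iteration constant $C_M$ depends only on $M$; this follows from the scaling-invariant form of the Strichartz estimate used in Lemma \ref{short stability}.
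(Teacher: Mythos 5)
Your proposal is correct and follows essentially the same route as the paper: partition $I$ into $J=J(M,\e_0)$ subintervals on which $\|\tilde w\|_{X(I_j)}$ is at most $\e_0$, apply the short-time stability lemma inductively with the accumulating constant $C^{j}$, and close the induction by choosing $\e_1\lesssim C^{-J}\e_0$. The additional points you flag (absolute continuity to justify the subdivision, uniformity of the short-time constant, and summing the subinterval bounds into a bound on $X(I)$) are exactly the routine details the paper leaves implicit.
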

\begin{proof}
Choose $\e_0$ as in Lemma \ref{short stability} and divide $I$ into $J=J(M,\e_0)$ subintervals $I_j=[t_{j-1},t_j)$ such that
\begin{equation*}
\frac{1}{2}\e_0\leq\|\tilde w\|_{X(I_j)}\leq\e_0
\end{equation*}
for each $j=1,\cdots,J$. Let $0<\e_1<\e_0$, Lemma \ref{short stability} shows that
\begin{equation*}
\|w-\tilde w\|_{L_t^\infty(I_1;H_x^1(\R^5))}+\|w-\tilde w\|_{X(I_1)}\leq C\e_1
\end{equation*}
on the first interval $I_1$. In particular,
\begin{equation*}
\|w(t_1)-\tilde w(t_1)\|_{H_x^1(\R^5)}\leq C\e_1.
\end{equation*}
Choosing $\e_1$ sufficiently small such that $C\e_1<\e_0$, we can apply Lemma \ref{short stability} on the interval $I_2$ and obtain
\begin{equation*}
\|w-\tilde w\|_{L_t^\infty(I_2;H_x^1(\R^5))}+\|w-\tilde w\|_{X(I_2)}\leq C^2\e_1.    
\end{equation*}
Recursively, we obtain
\begin{equation*}
\|w-\tilde w\|_{L_t^\infty(I_j;H_x^1(\R^5))}+\|w-\tilde w\|_{X(I_j)}\leq C^j\e_1,
\end{equation*}
for each $j=1,\cdots,J$, as long as $\max_{j=1,\cdots,J} C^j\e_1<\e_0$. We complete the proof of this lemma by using the finiteness of $J=J(M,\e)$ and choosing $\e_1\leq\frac{\e_0}{C^{J(M,\e)}}$
\end{proof}

Before proceeding to the proof of Theorem \ref{deterministic problem}, we give the global space-time bound for the solution of equation \eqref{tilde w}

\begin{lemma}[Global space-time bound]\label{global space-time bound}
For any $\tilde w_0$ with finite energy, i.e. 
\begin{equation*}
E(\tilde w_0)=\frac{1}{2}\int_{\R^5}|\nabla\tilde w(0,x)|^2{\rm d}x
+\frac{1}{4}\iint_{\R^5\times\R^5}\frac{|\tilde w(0,x)|^2|\tilde w(0,y)|^2}{|x-y|^4}{\rm d}x{\rm d}y<\infty,
\end{equation*}
there exists a unique global solution $\tilde w\in C(\R;H_x^1(\R^5))$ to equation \eqref{tilde w} with the initial data $\tilde w_0\in H_x^1(\R^5)$, such that
\begin{equation}
\|\tilde w\|_{X(\R)}\leq C(\|\tilde w_0\|_{H_x^1(\R^5)})
\end{equation}
for some constant $C(\|\tilde w_0\|_{H_x^1(\R^5)})$.
\end{lemma}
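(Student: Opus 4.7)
The plan is to reduce the statement to the global well-posedness and scattering theorem for the defocusing energy-critical Hartree equation on $\R^5$ established by Miao-Xu-Zhao in \cite{Miao-Xu-Zhao 2011(PDE)}. First, I would note that local well-posedness in $H^1_x(\R^5)$ follows at once from the contraction argument used to prove Lemma \ref{Local well-posedness}, specialized to $v\equiv 0$ and $e\equiv 0$; this produces a maximal lifespan together with the standard blow-up criterion that the solution extends globally provided $\|\tilde w\|_{X(I)}$ stays finite on every compact subinterval $I$. Since \eqref{tilde w} is Hamiltonian (unlike \eqref{NLS-hartree-w1}), both mass and energy are conserved exactly, and because the equation is defocusing the potential term in $E$ is non-negative; hence $\|\tilde w(t)\|_{H^1_x(\R^5)}\leq C(\|\tilde w_0\|_{H^1_x})$ uniformly in $t\in\R$, and the task reduces to controlling a single scattering-type norm globally.

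For that controlling step I would invoke the main result of \cite{Miao-Xu-Zhao 2011(PDE)}, which gives the a priori bound
\begin{equation*}
\|\tilde w\|_{L^3_t L^6_x(\R\times\R^5)}\leq C(\|\tilde w_0\|_{H^1_x})
\end{equation*}
for every non-radial finite-energy solution on $\R^5$, via an induction-on-energy scheme combined with a frequency-localized interaction Morawetz estimate. Once this bound is in hand, the remaining components of the $X$-norm, namely $\|\langle\nabla\rangle\tilde w\|_{L^2_t L^{10/3}_x}$, $\|\tilde w\|_{L^4_tL^4_x}$, and $\|\tilde w\|_{L^4_tL^5_x}$, follow from the Duhamel formula, Strichartz estimates, persistence of $H^1$-regularity, and interpolation against the uniform $L^\infty_t H^1_x$ bound, in the same spirit as the short-time nonlinear estimates used in Lemma \ref{Local well-posedness}.

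The main obstacle is precisely the step taken in \cite{Miao-Xu-Zhao 2011(PDE)}: the non-local potential $|\cdot|^{-4}\ast |\tilde w|^2$ interacts poorly with the Morawetz multiplier, so the naive interaction Morawetz identity does not close with favourable signs (the situation is already visible in the error terms \eqref{C-Me1}-\eqref{C-Mg1} handled in Proposition \ref{morawetz NLS-hartree-w}). The resolution, which I would import rather than reprove, is a frequency-localized version of the Morawetz identity applied to a high-frequency projection $P_{\geq N}\tilde w$, combined with Hardy-Littlewood-Sobolev to control the convolution and an induction-on-energy argument to absorb the low-frequency remainder; this is the step that would dominate any self-contained argument and is exactly what makes the dimensional restriction $d\geq 5$ natural.
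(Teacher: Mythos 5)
Your proposal is correct and takes essentially the same route as the paper: both reduce the lemma to the global well-posedness and a priori spacetime bound of Miao--Xu--Zhao for the defocusing energy-critical Hartree equation on $\R^5$ (the paper quotes the bound in $L_t^6L_x^{30/7}$ rather than $L_t^3L_x^6$, but both are $\dot H^1$-critical pairs and interchangeable here), and then upgrade that single critical norm to the full $X(\R)$-norm by subdividing time into finitely many intervals on which it is small and closing with Strichartz and a continuity argument.
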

\begin{proof}
The proof is based on the results in \cite{Miao-Xu-Zhao 2011(PDE)}. From \cite{Miao-Xu-Zhao 2011(PDE)}, we get that there exists a unique global solution $\tilde w\in C(\R;H_x^1)\cap L_t^6(\R;L_x^{\frac{30}{7}}(\R^5))$ to \eqref{tilde w} such that
\begin{equation}
\|\tilde w\|_{L_t^6(\R;L_x^{\frac{30}{7}}(\R^5))}\leq C(E(\tilde w_0)).
\end{equation}
Then, divide $\R$ into $J=J(C(E(\tilde w_0),\eta)$ subintervals $I_j=[t_{j-1},t_j),j=1,\cdots,J,$ such that
\begin{equation*}
\|\tilde w\|_{L_t^6(I_j;L_x^{\frac{30}{7}})}\leq\eta
\end{equation*}
for some small $\eta>0$ to be chosen later. By the Duhamel formula, we have
\begin{equation*}
\tilde w(t,x)=e^{i(t-t_{j-1})\Delta}\tilde w(t_{j-1})-i\int_{t_{j-1}}^t e^{i(t-s)\Delta}(|\cdot|^{-4}\ast|\tilde w|^2)\tilde w(s){\rm d}s.
\end{equation*}
Using Lemma \ref{classical Strichatz}, we find
\begin{equation*}
\begin{split}
\|\tilde w\|_{X(I_j)}
\lesssim&
\|\tilde w(t_{j-1})\|_{H_x^1}+\|\langle\nabla\rangle[(|\cdot|^{-4}\ast|\tilde w|^2)\tilde w]\|_{L_t^1(I_j;L_x^2)}\\
\lesssim&
\|\tilde w(t_{j-1})\|_{H_x^1}+\|\langle\nabla\rangle\tilde w\|_{L_t^2(I_j;L_x^{\frac{10}{3}})}\|\tilde w\|_{L_t^6(I_j;L_x^{\frac{30}{7}})}\|\tilde w\|_{L_t^3(I_j;L_x^{6})}\\
\lesssim&
\|\tilde w(t_{j-1})\|_{H_x^1}+\|\tilde w\|_{L_t^6(I_j;L_x^{\frac{30}{7}})}\|\tilde w\|_{X(I_j)}^2.
\end{split}
\end{equation*}
First, estimate $\tilde w$ on the interval $I_1$:
\begin{equation*}
\begin{split}
\|\tilde w\|_{X(I_1)}
\lesssim&
\|\tilde w_0\|_{H_x^1}+\|\tilde w\|_{L_t^6(I_1;L_x^{\frac{30}{7}})}\|\tilde w\|_{X(I_1)}^2.
\end{split}
\end{equation*}
Let $\|\tilde w\|_{L_t^6(I_1;L_x^{\frac{30}{7}})}<4^{-2}\|\tilde w_0\|_{H_x^1}^{-1}$, by the standard continuity argument, we yield that
\begin{equation*}
\|\tilde w\|_{X(I_1)}\leq 2\|\tilde w_0\|_{H_x^1}.
\end{equation*}
In particular, $\|w(t_1)\|\leq 2\|\tilde w_0\|_{H_x^1}$.
Similarly as above, estimating $\tilde w$ on the interval $I_2:$
\begin{equation*}
\begin{split}
\|\tilde w\|_{X(I_2)}
\lesssim&
\|\tilde w(t_1)\|_{H_x^1}+\|\tilde w\|_{L_t^6(I_2;L_x^{\frac{30}{7}})}\|\tilde w\|_{X(I_2)}^2\\
\lesssim&
2\|\tilde w_0\|_{H_x^1}+\|\tilde w\|_{L_t^6(I_2;L_x^{\frac{30}{7}})}\|\tilde w\|_{X(I_2)}^2.
\end{split}
\end{equation*}
Let  
$\|\tilde w\|_{L_t^6(I_2;L_x^{\frac{30}{7}})}<4^{-2}(2\|\tilde w_0\|_{H_x^1})^{-1}$, by the standard continuity argument, we obtain that
\begin{equation*}
\|\tilde w\|_{X(I_2)}\leq 2^2\|\tilde w_0\|_{H_x^1}.
\end{equation*}
Arguing recursively, we get
\begin{equation*}
\|\tilde w\|_{X(I_j)}\leq 2^j\|\tilde w_0\|_{H_x^1}.
\end{equation*}
for each $j=1,\cdots,J$, as long as 
\begin{equation*}
\max_{j=1,\cdots,J}\|\tilde w\|_{L_t^6(I_j;L_x^{\frac{30}{7}})}<4^{-2}(2^{J-1}\|\tilde w_0\|_{H_x^1})^{-1},
\end{equation*}
i.e. $\eta<4^{-2}(2^{J-1}\|\tilde w_0\|_{H_x^1})^{-1}.$ The proof of this lemma is completed.
\end{proof}

\section{Proof of Theorem \ref{deterministic problem}}\label{section 5}
In this section, we prove Theorem \ref{deterministic problem}. Its proof is based on the almost conservation laws (Proposition \ref{almost conservation law}), the interaction Morawetz estimate and the stability theories. Recall the perturbed equation:
\begin{equation}\label{NLS-hartree-w2}
\begin{cases}
i\pa_t w+\Delta w=(|\cdot|^{-4}\ast|w|^2)w+e,\\
w(0,x)=w_0(x),
\end{cases}
e=(|\cdot|^{-4}\ast|u|^2)u-(|\cdot|^{-4}\ast|w|^2)w,
\end{equation}
and the definitions of energy and mass for the solution $w$ of \eqref{NLS-hartree-w2}:
\begin{equation}\label{NLS-w energy}
E_w(t)\coloneqq\frac{1}{2}\int_{\R^5}|\nabla w(t,x)|^2{\rm d}x+\frac{1}{4}\iint_{\R^5\times\R^5}\frac{|u(t,x)|^2|u(t,y)|^2}{|x-y|^4}{\rm d}x{\rm d}y,
\end{equation}
\begin{equation}\label{NLS-w mass}
M_w(t)\coloneqq\int_{\R^5}|w(t,x)|^2{\rm d}x.
\end{equation}
We prove that the mass and energy of the solution $w$ of \eqref{NLS-hartree-w2} are almost conserved.  To prove these, we introduce the following lemma, which is an ingredient for the later proof of Proposition \ref{almost conservation law}. 

\begin{lemma}\label{mass,energy,d/dt,bound}
Assume that $w\in C(I;H_x^1(\R^5))$ solves \eqref{NLS-hartree-w2}. Let $E_w(t)$ and $M_w(t)$ be defined as \eqref{NLS-w energy} and \eqref{NLS-w mass}. Then for any $t\in I$,
\begin{align}
\left|\frac{\rm d}{{\rm d}t}M_w(t)\right|
&\leq
2\left|\int_{\R^5}((|\cdot|^{-4}\ast|u|^2)u-(|\cdot|^{-4}\ast|w|^2)w)\bar w{\rm d}x\right|,\label{mass,d/dt,bound}\\
\left|\frac{\rm d}{{\rm d}t}E_w(t)\right|
&\leq
\left|\int_{\R^5}(|\cdot|^{-4}\ast|u|^2)u\Delta\bar v{\rm d}x\right|\label{energy,d/dt,bound}.
\end{align}
\end{lemma}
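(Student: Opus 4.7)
The plan is to differentiate $M_w(t)$ and $E_w(t)$ directly, substitute from the equation
\begin{equation*}
i\pa_t w=-\Delta w+(|\cdot|^{-4}\ast|u|^2)u
\end{equation*}
(which is just \eqref{NLS-hartree-w2} rewritten), and exploit the fact that the purely $w$-dependent nonlinear integrands are real-valued, so they drop out upon taking $\IIm$. Strictly speaking, for $w\in C(I;H_x^1)$ these time derivatives should be justified by a standard regularization argument (smoothing the initial data and passing to the limit); I will take that as understood.

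For the mass I would compute
\begin{equation*}
\tfrac{\rm d}{{\rm d}t}M_w(t)=2\RRe\int\bar w\,\pa_t w\,{\rm d}x=2\IIm\int\bar w\,(|\cdot|^{-4}\ast|u|^2)u\,{\rm d}x,
\end{equation*}
since $\int\bar w\,\Delta w\,{\rm d}x\in\R$. Because $(|\cdot|^{-4}\ast|w|^2)|w|^2$ is real valued, $\IIm\int\bar w\,(|\cdot|^{-4}\ast|w|^2)w\,{\rm d}x=0$, so I may subtract this zero inside $\IIm$ and then apply $|\IIm(\cdot)|\le|\cdot|$ to obtain \eqref{mass,d/dt,bound}.

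For the energy I would split $E_w=K+V$ with $K=\tfrac12\|\nabla w\|_{L^2}^2$ and $V=\tfrac14\iint|u(x)|^2|u(y)|^2|x-y|^{-4}\,{\rm d}x{\rm d}y$. Integrating by parts in $K$ and using the equation,
\begin{equation*}
\tfrac{\rm d}{{\rm d}t}K=-\RRe\int\Delta\bar w\,\pa_t w\,{\rm d}x=-\IIm\int\Delta\bar w\,(|\cdot|^{-4}\ast|u|^2)u\,{\rm d}x,
\end{equation*}
again because $\int|\Delta w|^2\,{\rm d}x\in\R$. For $V$, the symmetry of the kernel together with the chain rule gives $\tfrac{\rm d}{{\rm d}t}V=\int(|\cdot|^{-4}\ast|u|^2)\,\RRe(\pa_t u\,\bar u)\,{\rm d}x$; the crucial input here is that $v=e^{it\Delta}v_0$ solves the free Schr\"odinger equation, so $\pa_t u=i\Delta u-i(|\cdot|^{-4}\ast|u|^2)u$, and only the first term survives the $\RRe$, yielding $\tfrac{\rm d}{{\rm d}t}V=-\IIm\int(|\cdot|^{-4}\ast|u|^2)\Delta u\,\bar u\,{\rm d}x$.

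I would then close the argument by writing $\Delta\bar w=\Delta\bar u-\Delta\bar v$ in $\tfrac{\rm d}{{\rm d}t}K$ and using $\IIm(\Delta\bar u\cdot u)=-\IIm(\Delta u\,\bar u)$: the $\Delta\bar u$-contribution cancels $\tfrac{\rm d}{{\rm d}t}V$ exactly, leaving $\tfrac{\rm d}{{\rm d}t}E_w(t)=\IIm\int(|\cdot|^{-4}\ast|u|^2)u\,\Delta\bar v\,{\rm d}x$, from which \eqref{energy,d/dt,bound} follows by $|\IIm(\cdot)|\le|\cdot|$. I expect the main subtlety to be keeping track of this cancellation: it works only because the potential part of $E_w$ is built from $|u|^2|u|^2$ rather than $|w|^2|w|^2$ (the ``modified'' energy) and because $v$ is a free evolution, so that the nonlocal cubic piece of $\pa_t u$ disappears inside $\RRe(\pa_t u\,\bar u)$. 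Everything else is routine once regularization has been invoked.
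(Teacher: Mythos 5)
Your proposal is correct and follows essentially the same route as the paper: for the mass you insert a vanishing imaginary part of the real-valued $w$-only term, and for the energy you derive the exact identity $\tfrac{\rm d}{{\rm d}t}E_w(t)=\IIm\int(|\cdot|^{-4}\ast|u|^2)u\,\Delta\bar v\,{\rm d}x$ via the cancellation between the potential derivative and the $\pa_t\bar u$ (equivalently $\Delta\bar u$) part of the kinetic derivative, then use $\pa_t v=i\Delta v$. The only difference from the paper is cosmetic — the paper eliminates $\Delta w$ in favor of $\pa_t w$ and recognizes $-\RRe\int F\pa_t\bar u$ directly as $-\tfrac{\rm d}{{\rm d}t}V$, whereas you differentiate $K$ and $V$ separately — so the two arguments coincide.
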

\begin{proof}
By integration by parts, we have
\begin{equation*}
\frac{\rm d}{{\rm d}t}M_w(t)
=\frac{\rm d}{{\rm d}t}\left(\int_{\R^5}|w(t,x)|^2{\rm d}x\right)
=2\RRe\int_{\R^5}\pa_t w\bar w{\rm d}x.
\end{equation*}
According to \eqref{NLS-hartree-w2}, 
\begin{equation*}
\pa_t w=-i\left[(|\cdot|^{-4}\ast|u|^2)u-\Delta w\right].
\end{equation*}
Hence,
\begin{equation*}
\begin{split}
\frac{\rm d}{{\rm d}t}M_w(t)
&=2\RRe\int_{\R^5}\bar w\left[-i\left((|\cdot|^{-4}\ast|u|^2)u-\Delta w\right)\right]{\rm d}x\\
&=-2\RRe\int_{\R^5}\bar w\left(i(|\cdot|^{-4}\ast|u|^2)u\right)+i\bar w\Delta w{\rm d}x\\
&=-2\RRe\int_{\R^5}\bar w\left(i(|\cdot|^{-4}\ast|u|^2)u\right){\rm d}x\\
&=-2\RRe\int_{\R^5}\bar w\left[\left(i(|\cdot|^{-4}\ast|u|^2)u\right)-i(|\cdot|^{-4}\ast|w|^2)w\right]{\rm d}x,
\end{split}
\end{equation*}
which gives the inequality \eqref{mass,d/dt,bound}.
By integration by parts, we derive that
\begin{equation}\label{E(w(t))1}
\begin{split}
\frac{\rm d}{{\rm d}t}\left(\frac{1}{2}|\nabla w(t,x)|^2{\rm d}x\right)
&=\RRe\int_{\R^5}\nabla w\cdot\nabla \pa_t\bar w{\rm d}x\\
&=-\RRe\int_{\R^5}\Delta w\pa_t\bar w{\rm d}x\\
&=-\RRe\int_{\R^5}\left[(|\cdot|^{-4}\ast|u|^2)u-i\pa_t w\right]\pa_t\bar w{\rm d}x\\
&=-\RRe\int_{\R^5}(|\cdot|^{-4}\ast|u|^2)u\pa_t \bar w{\rm d}x\\
&=-\RRe\int_{\R^5}(|\cdot|^{-4}\ast|u|^2)u(\pa_t\bar u-\pa_t\bar v){\rm d}x\\
&=-\frac{1}{4}\frac{\rm d}{{\rm d}t}\left(\int_{\R^5}(|\cdot|^{-4}\ast|u|^2){\rm d}x\right)+\RRe\int_{\R^5}(|\cdot|^{-4}\ast|u|^2)u\pa_t\bar v{\rm d}x.
\end{split}
\end{equation}
Combining \eqref{E(w(t))1} and $\pa_t\bar v=-i\Delta\bar v$, we arrive at \eqref{energy,d/dt,bound}.
The proof of this lemma is completed.
\end{proof}

Subsequently, one of the most important tools for proving Theorem \ref{deterministic problem} is presented: the almost conservation laws.
\begin{proposition}\label{almost conservation law}
Let $a\in\mathbb{N}$ satisfy \eqref{parameter a}, $s<0$ and $A>0$. Suppose $w$ is the solution of \eqref{NLS-hartree-w2} and $v=e^{it\Delta}v_0\in Y(\R)$ satisfies \eqref{v0 w0}. Take $T>0$ such that $w\in C([0,T];H_x^1(\R^5))$. Suppose there exists $N_0=N_0(A)$ with the following properties:
\begin{equation*}
\|u_0\|_{H_x^s(\R^5)}+\|v\|_{Y(\R)}\leq A
\end{equation*}
and
\begin{equation*}
M_w(0)\leq AN_0^{-2s},\  E_w(0)\leq AN_0^{2(1-s)}.
\end{equation*}
Then, we have
\begin{equation}
\sup_{t\in[0,T]}M_w(t)\leq 2AN_0^{-2s},\  \sup_{t\in[0,T]}E_w(t)\leq 2AN_0^{2(1-s)}.
\end{equation}
\end{proposition}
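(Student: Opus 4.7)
The strategy is a continuity/bootstrap argument. Set
\[
\mathcal{J} := \{t \in [0,T] : M_w(\tau) \leq 2AN_0^{-2s} \text{ and } E_w(\tau) \leq 2AN_0^{2(1-s)}\ \text{for all}\ \tau \in [0,t]\}.
\]
By hypothesis $0 \in \mathcal{J}$, and by continuity of $M_w,E_w$ the set $\mathcal{J}$ is closed in $[0,T]$. The plan is to show that on $\mathcal{J}$ both bounds in fact hold with the factor $2$ improved to $\tfrac{3}{2}$; this forces $\mathcal{J}$ to be open and hence equal to $[0,T]$. By Remark \ref{s<0} we may restrict to $s<0$.

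Fix $T_* \in \mathcal{J}$. The bootstrap gives $\|w\|_{L^\infty_t L^2_x} \lesssim A^{1/2}N_0^{-s}$, $\|w\|_{L^\infty_t \dot H^1_x} \lesssim A^{1/2}N_0^{1-s}$, and by interpolation $\|w\|_{L^\infty_t \dot H^{1/2}_x} \lesssim A^{1/2}N_0^{1/2-s}$. Since $v = e^{it\Delta}P_{\geq N_0}u_0$ is frequency-supported in $|\xi|\gtrsim N_0$ and since the $Y$-norm controls $\|\langle\nabla\rangle^{s+a/2}v\|_{L^2_tL^5_x}$, I trade integrability for regularity: $\|v\|_{L^2_tL^5_x}\lesssim AN_0^{-(s+a/2)}$ and $\|\nabla v\|_{L^2_tL^5_x}\lesssim AN_0^{1-s-a/2}$, the exponents being negative thanks to \eqref{parameter a}. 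Feeding these bounds into Proposition \ref{morawetz NLS-hartree-w} and using Young's inequality to absorb the self-referencing term $\||\nabla|^{-1/2}w\|^2_{L^4_{t,x}}$ on the right-hand side (its coefficient is $O(N_0^{-\delta})$ once \eqref{parameter a} holds) yields
\[
\bigl\||\nabla|^{-1/2}w\bigr\|^4_{L^4_t([0,T_*];L^4_x)} \lesssim A^2 N_0^{1-4s}.
\]

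I then apply Lemma \ref{mass,energy,d/dt,bound}. For the mass, dualising against $\bar w$ and expanding $e$ into its five multilinear pieces (each carrying at least one $v$) exactly as in the proof of Proposition \ref{morawetz NLS-hartree-w}, then using H\"older, \eqref{Hardy-Littlewood Sobolev inequality formula} and \eqref{G-N}, I obtain
\[
\int_0^{T_*}|M_w'(t)|\,dt \lesssim \|v\|_{L^2_tL^5_x}\Bigl(\|w\|_{L^\infty_t H^1_x}\bigl\||\nabla|^{-1/2}w\bigr\|^2_{L^4_{t,x}}+\|v\|^3_{L^6_tL^3_x}\Bigr) \lesssim A^{5/2}N_0^{3/2-4s-a/2}+A^4N_0^{-s-a/2},
\]
and \eqref{parameter a} forces both exponents of $N_0$ to lie strictly below $-2s$, so this is $o(AN_0^{-2s})$. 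For the energy I integrate by parts once, $\int(|\cdot|^{-4}\ast|u|^2)u\,\Delta\bar v = -\int\nabla[(|\cdot|^{-4}\ast|u|^2)u]\cdot\nabla\bar v$, expand $u = w+v$, and bound each multilinear piece by H\"older/HLS, offsetting the $N_0^{1-s}$ growth of $\|w\|_{L^\infty H^1}$ by the $N_0$-decay of $v$-norms. The resulting bound is $\int_0^{T_*}|E_w'|\,dt = O(A^c N_0^{2(1-s)-\delta})$ for some $c,\delta>0$. Choosing $N_0=N_0(A)$ large enough that both integrated increments are $\leq\tfrac12 AN_0^{-2s}$ and $\leq\tfrac12 AN_0^{2(1-s)}$ closes the bootstrap.

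The main obstacle is the energy step: after the integration by parts there are two loose derivatives to distribute, while each $w$-factor only affords $L^\infty H^1$-control of size $N_0^{1-s}$, so preventing all resulting multilinear contributions from matching or exceeding the target $N_0^{2(1-s)}$ requires the full strength of condition \eqref{parameter a}. The bounds $a>4-2s$ and $a>5-4s$ are tailored precisely so that each error picks up a genuine $N_0^{-\delta}$ factor after the $H^1$-growth of $w$ and the $Y$-norm growth of $v$ are both accounted for; without either of them the induction would fail to close.
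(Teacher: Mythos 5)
Your overall architecture coincides with the paper's: the same bootstrap set-up, the same trade of regularity for decay on the high-frequency free evolution (the paper's \eqref{p-conservation 1}), the same interpolation bounds on $w$, the same absorption argument in Proposition \ref{morawetz NLS-hartree-w} yielding $\||\nabla|^{-\frac12}w\|_{L^4_{t,x}}^4\lesssim N_0^{1-4s}$, and the identical mass-increment bound $N_0^{\frac32-4s-\frac{a}{2}}\ll N_0^{-2s}$.

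The genuine problem is your energy step. After integrating by parts, the multilinear expansion of $-\int\nabla[(|\cdot|^{-4}\ast|u|^2)u]\cdot\nabla\bar v$ unavoidably contains the term $\int(|\cdot|^{-4}\ast|w|^2)\,\nabla w\cdot\nabla\bar v$, in which a derivative falls on $w$. The only source of time integrability here is $\|\nabla v\|_{L^2_tL^5_x}$, so after H\"older and Hardy--Littlewood--Sobolev you are forced into something like $\|\nabla v\|_{L^2_tL^5_x}\|\nabla w\|_{L^\infty_tL^2_x}\|w\|^2_{L^4_tL^4_x}$ (any rearrangement of the convolution leads to the same $\|w\|_{L^4_x}^2$). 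But $\|w\|_{L^4_{t,x}}$ is not controlled by the bootstrap data: interpolating $L^4_x$ between $\||\nabla|^{-\frac12}w\|_{L^4_x}$ and $\||\nabla|^{s_2}w\|_{L^2_x}$ in $\R^5$ forces $s_2=\frac54>1$, beyond what $E_w$ provides, and no $L^2_tL^{\frac{10}{3}}_x$-type ($X$-norm) control of $\nabla w$ is available inside the bootstrap. So the claimed bound $O(A^cN_0^{2(1-s)-\delta})$ does not follow from the estimates you have assembled. The fix is to not integrate by parts at all: identity \eqref{energy,d/dt,bound} already places both derivatives on the free evolution, and since $a>4-2s$ the $Y$-norm gives $\|\Delta v\|_{L^2_tL^5_x}\lesssim N_0^{2-s-\frac{a}{2}}\|v\|_{Y}$; one then reuses verbatim the derivative-free multilinear estimate from the Morawetz proof (as for the mass), obtaining $\int_I|E_w'|\lesssim N_0^{\frac72-4s-\frac{a}{2}}\ll N_0^{2(1-s)}$, which is exactly how the paper closes the energy half of the bootstrap.
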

\begin{proof}
By a bootstrap continuity argument, it suffices to prove that if 
\begin{equation}
\sup_{t\in I}M_w(t)\leq 2AN_0^{-2s},\  \sup_{t\in I}E_w(t)\leq 2AN_0^{2(1-s)},
\end{equation}
then
\begin{equation}
\sup_{t\in I}M_w(t)\leq \frac{3}{2}AN_0^{-2s},\  \sup_{t\in I}E_w(t)\leq \frac{3}{2}AN_0^{2(1-s)},
\end{equation}
where $I\subset[0,T]$.

Since $\hat v=\mathcal{F}(e^{it\Delta}P_{\geq N_0}u_0(x))$ is supported on the high frequency region, we deduce that for $0\leq l\leq s+\frac{a}{2}$,
\begin{equation}\label{p-conservation 1}
\||\nabla|^lv\|_{L_t^2L_x^5}
\lesssim
N_0^{l-s-\frac{a}{2}}\|v\|_{Y}
\lesssim
N_0^{l-s-\frac{a}{2}}
\lesssim1.
\end{equation}
By bootstrap hypothesis, we have
\begin{equation}\label{p-conservation 2}
\|w\|_{L_t^\infty L_x^2}\lesssim N_0^{-s},\quad\|w\|_{L_t^\infty \dot H_x^1}\lesssim N_0^{1-s}.
\end{equation}
Combined with the interpolation, this leads to
\begin{equation}\label{p-conservation 3}
\|w\|_{L_t^\infty \dot H_x^\alpha}\lesssim N_0^{\alpha-s},
\end{equation}
for any $0\leq\alpha\leq 1.$ 
Using Proposition \ref{morawetz NLS-hartree-w}, inequalities \eqref{p-conservation 1}, \ref{p-conservation 2} and \ref{p-conservation 3}, we derive that
\begin{equation*}
\begin{split}
\big\||\nabla|^{-\frac{1}{2}}w\big\|_{L_{t,x}^4}^4
&\lesssim
N_0^{1-4s}+N_0^{2-3s-\frac{a}{2}}\left(N_0^{1-s}\big\||\nabla|^{-\frac{1}{2}}w\big\|_{L_{t,x}^4}^2+1\right)\\
&\lesssim
N_0^{1-4s}+N_0^{2-3s-\frac{a}{2}}+N_0^{6-8s-a}+\frac{1}{2}\big\||\nabla|^{-\frac{1}{2}}w\big\|_{L_{t,x}^4}^4.
\end{split}
\end{equation*}
Since $s>\frac{5}{4}-\frac{1}{4}a$, we get
\begin{equation}
\big\||\nabla|^{-\frac{1}{2}}w\big\|_{L_{t,x}^4}^4 
\lesssim
N_0^{1-4s}.
\end{equation}
For the mass bound, combining Lemma \ref{mass,energy,d/dt,bound}, H\"{o}lder's inequality, \eqref{p-conservation 1} and \eqref{p-conservation 2} with the proof steps of \eqref{NLS-error}, we obtain
\begin{equation}
\begin{split}
\int_{I}\text{RHS of \eqref{mass,d/dt,bound}}{\rm d}t
=&
\int_I\left|\frac{\rm d}{{\rm d}t}M_w(t)\right|{\rm d}t\\
\lesssim&
\int_I\left|\int_{\R^5}((|\cdot|^{-4}\ast|u|^2)u-(|\cdot|^{-4}\ast|w|^2)w)\bar w{\rm d}x\right|{\rm d}t\\
\lesssim&
\|v\|_{L_t^2L_x^5}\left(\|w\|_{L_x^\infty H^1_x}\big\||\nabla|^{-\frac{1}{2}}w\big\|_{L_{t,x}^4}^2
+\|v\|_{L_t^6L_x^3}^3\right)\\
\lesssim&
N_0^{\frac{3}{2}-4s-\frac{a}{2}}.
\end{split}
\end{equation}
Using integration by parts and $s>\frac{5}{4}-\frac{1}{4}a$, we deduce that
\begin{equation*}
\begin{split}
M_w(t)
&\leq M_w(0)+\int_I\left|\frac{\rm d}{{\rm d}t}M_w(t)\right|{\rm d}t\\
&\leq AN_0^{-2s}+C(A)N_0^{\frac{3}{2}-4s-\frac{a}{2}}\\
&\leq \frac{3}{2}AN_0^{-2s},
\end{split}
\end{equation*}
where $C(A)N_0^{\frac{3}{2}-2s-\frac{a}{2}}\leq\frac{1}{2}A.$

Similar to the proof above and \eqref{e2}, we get the energy bound:
\begin{equation}
\begin{split}
\int_{I}\text{RHS of \eqref{energy,d/dt,bound}}{\rm d}t
=&
\int_I\left|\frac{\rm d}{{\rm d}t}E_w(t)\right|{\rm d}t\\
\lesssim&
\int_I\left|\int_{\R^5}(|\cdot|^{-4}\ast|u|^2)u\Delta\bar v{\rm d}x\right|{\rm d}t\\
\lesssim&
\|\Delta v\|_{L_t^2L_x^5}\left(\|w\|_{L_x^\infty H^1_x}\big\||\nabla|^{-\frac{1}{2}}w\big\|_{L_{t,x}^4}^2+\|v\|_{L_t^6L_x^3}^3\right)
\\
\lesssim&
N_0^{\frac{7}{2}-4s-\frac{a}{2}}.
\end{split}
\end{equation}
Using integration by parts and $s>\frac{5}{4}-\frac{1}{4}a$, we have
\begin{equation*}
\begin{split}
E_w(t)
&\leq E_w(0)+\int_I\left|\frac{\rm d}{{\rm d}t}E_w(t)\right|{\rm d}t\\
&\leq AN_0^{2(1-s)}+C(A)N_0^{\frac{7}{2}-4s-\frac{a}{2}}\\
&\leq \frac{3}{2}AN_0^{2(1-s)}.
\end{split}
\end{equation*}
This completes the proof of the proposition.
\end{proof}

Now, we are in position to show Theorem \ref{deterministic problem}.
\begin{proof}[Proof of Theorem \ref{deterministic problem}]
We only consider the forward time interval $[0,+\infty)$ and the other half is similar. By Proposition \ref{almost conservation law}, we obtain that if $w\in C([0,T];H_x^1)$ solves \eqref{NLS-hartree-w2} for some $T>0,$ then
\begin{equation}\label{determinstic th es1}
\sup_{t\in[0,T]}\|w(t)\|_{H^1_x}
\leq 2\sup_{t\in[0,T]}E_w(t)+\sup_{t\in[0,T]}M_w(t)
\leq 6AN_0^{2(1-s)}.
\end{equation}
Given $t'\in[0,\infty)$, consider equation \eqref{tilde w} with the initial data $w(t',x)$:
\begin{equation}
\begin{cases}
i\pa_t\tilde w+\Delta\tilde w=(|x|^{-4}\ast|\tilde w|^2)\tilde w,\\
\tilde w(t',x)=w(t',x).
\end{cases}
\end{equation}
From Lemma \ref{global space-time bound}, there exists a unique solution $\tilde w^{(t')}(t,x)$ satisfying
\begin{equation}\label{determinstic th es2}
\|\tilde w^{(t')}\|_{L_t^\infty(\R;H_x^1)}+\|\tilde w^{(t')}\|_{X(\R)}\leq C(\|w(t')\|_{H_x^1}),
\end{equation}
where $C(\|w(t')\|_{H_x^1})$ is a constant. According to Lemma \ref{Local well-posedness}, there exists a unique solution $w$ of \eqref{NLS-hartree-w2} in $C([0,T];H_x^1)\cap X([0,T])$ for some $T>0$. Then, for $t'\in[0,T]$, using \eqref{determinstic th es1} and \eqref{determinstic th es2}, we have
\begin{equation}\label{estimate 1}
\sup_{t'\in[0,T]}(\|\tilde w^{(t')}\|_{L_t^\infty(\R;H_x^1)}+\|\tilde w^{(t')}\|_{X(\R)})\leq C(6AN_0^{2(1-s)})\eqqcolon C(A,N_0).
\end{equation}
Since $\|v\|_{Y(\R)}<A$, we can split $[0,\infty)=\cup_{k=1}^K I_k=[t_{k-1},t_k)$ with $t_0=0$, such that
\begin{equation*}
\|v\|_{Y(I_k)}\sim\e_1,
\end{equation*}
where $\e_1=\e_1(A,N_0)$ is define in Lemma \ref{Long stability} and $K=K(A,N_0,\e_1).$ From \eqref{estimate 1},we get
\begin{equation}
\|\tilde w^{(t_0)}\|_{L_t^\infty(\R;H_x^1)}+\|\tilde w^{(t_0)}\|_{X(\R)}\leq C(A,N_0).
\end{equation}
Using Lemma \ref{Long stability} on the first interval $I_1$, there exists a solution $w\in C(I_1;H_x^1)$ satisfies
\begin{equation}
\|w-\tilde w^{(t_0)}\|_{L_t^\infty(I_1;H_x^1)}+\|w-\tilde w^{(t_0)}\|_{X(I_1)}\leq C\e_1.
\end{equation}
Then by the triangle inequality,
\begin{equation*}
\begin{split}
\|w\|_{L_t^\infty(I_1;H_x^1)}+\|w\|_{X(I_1)}
&\leq C\e_1+\|\tilde w^{(t_0)}\|_{L_t^\infty(I_1;H_x^1)}+\|\tilde w^{(t_0)}\|_{X(I_1)}\\
&\leq C\e_1+C(A,N_0). 
\end{split}
\end{equation*}
In particular, for the solution $\tilde w^{(t_1)}$ of the equation
\begin{equation*}
\begin{cases}
i\pa_t\tilde w+\Delta\tilde w=(|x|^{-4}\ast|\tilde w|^2)\tilde w,\\
\tilde w(t_1,x)=w(t_1,x),
\end{cases}
\end{equation*}
we have 
\begin{equation*}
\|\tilde w^{(t_1)}(t_1)\|_{H_x^1}=\|w(t_1)\|_{H_x^1}
\leq
C\e_1+C(A,N_0).
\end{equation*}
Repeating the above step on $I_2$, we can obtain the existence of $w\in C(I_2;H_x^1)$ satisfies
\begin{equation*}
\begin{split}
\|w\|_{L_t^\infty(I_2;H_x^1)}+\|w\|_{X(I_2)}
\leq 2C\e_1+C(A,N_0).  
\end{split}
\end{equation*}
Applying the previous argument inductively for all $I_k$, $k=1,\cdots,K$, we obtain that there exists $w\in C(I_k;H_x^1)$ satisfies
\begin{equation*}
\begin{split}
\|w\|_{L_t^\infty(I_k;H_x^1)}+\|w\|_{X(I_k)}
\leq kC\e_1+C(A,N_0).
\end{split}
\end{equation*}
Summing up these bound, we deduce that $w\in C([0,\infty);H_x^1)$ and 
\begin{equation}\label{estimate 2}
\begin{split}
\|w\|_{L_t^\infty([0,\infty);H_x^1)}+\|w\|_{X([0,\infty))}
\leq C\e_1\sum_{k=1}^Kk+C(A,N_0)\eqqcolon C(A,N_0,\e_1).
\end{split}
\end{equation}
Hence we get the global well-posedness of $w$ and consequently of $u=w+v$. 

Then, the scattering of $u$ follows from \eqref{estimate 2} and a standard application of the Strichartz estimate. Similar to the proof of \emph{Proposition 4.1} in \cite{Shen-Soffer-Wu 1}, it suffices to prove that
\begin{equation}
\left\|\langle\nabla\rangle\int_0^\infty e^{-is\Delta}(|\cdot|^{-4}\ast|u|^2)u(s){\rm d}s\right\|_{L_x^2}\leq C(A).
\end{equation}
Using the argument in Lemma \ref{Local well-posedness} and the assumption of $v$ in Theorem \ref{deterministic problem},
\begin{equation*}
\begin{split}
&\left\|\langle\nabla\rangle\int_0^\infty e^{-is\Delta}(|\cdot|^{-4}\ast|u|^2)u(s){\rm d}s\right\|_{L_x^2}\\
\lesssim&
\|\langle\nabla\rangle[(|\cdot|^{-4}\ast|u|^2)u]\|_{L_t^1L_x^2}\\
\lesssim&
\|\langle\nabla\rangle w\|_{L_t^2L_x^{\frac{10}{3}}}\|u\|_{L_t^4L_x^5}^2+\|\langle\nabla\rangle v\|_{L_t^2L_x^5}\|u\|^2_{L_{t,x}^4}\\
\lesssim&
C(A),
\end{split}
\end{equation*}
which completes the proof of this theorem.
\end{proof}

\section{Proof of Theorem \ref{main theorem}}\label{section 6}
This section aims to prove Theorem \ref{main theorem}. First, we prove that the condition \eqref{deterministic Condition1} holds under the random sense in Proposition \ref{Y random bound}. More precisely, there exists $\tilde\Omega\subseteq\Omega$, $A\geq 0$ and $N_0=N_0(A)\gg1$, such that for any $\omega\in\tilde\Omega$, 
\begin{equation*}
\|u_0^\omega\|_{H^s_x(\R^5)}+\|v^\omega\|_{Y(\R)}\leq A.
\end{equation*}
Then, based on Theorem \ref{deterministic problem} and Proposition \ref{Y random bound}, we give the proof of Theorem \ref{main theorem}.

\begin{proposition}\label{Y random bound}
Let $s\in\R$ and $f\in H_x^s$. Define the randomization $f^\omega$ as in Definition \ref{randomization}. Then there exists absolute constants $C>0$ and $c>0$ such that for any $\lambda>0$ it holds that
\begin{equation}
\mathbb{P}\left(\{\omega\in\Omega:\|e^{it\Delta}f^\omega\|_{Y(\R)}>\lambda\}\right)\leq C\exp\{-c\lambda^2\|f\|_{H_x^s(\R^5)}^2\}.
\end{equation}
In particular, we have for almost every $\omega\in\Omega$
\begin{equation}
\begin{split}
\|e^{it\Delta}f^\omega\|_{Y(\R)}
=\|\langle\nabla\rangle&^{s+\frac{1}{2}a}e^{it\Delta}f^\omega\|_{L_t^2(\R;L_x^5(\R^5))}+\|e^{it\Delta}f^\omega\|_{L_t^4(\R;L_x^5(\R^5))}\\
&+\|e^{it\Delta}f^\omega\|_{L_t^4(\R;L_x^4(\R^5))}+\|e^{it\Delta}f^\omega\|_{L_t^6(\R;L_x^3(\R^5))}
<\infty,  
\end{split}
\end{equation}
i.e.
\begin{equation}
\mathbb{P}\left(\{\omega\in\Omega:\|e^{it\Delta}f^\omega\|_{Y(\R)}<\infty\}\right)=1.
\end{equation}
\end{proposition}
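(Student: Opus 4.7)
The plan is to control the $L_\omega^p(\Omega)$-norms of each of the four summands defining $\|e^{it\Delta} f^\omega\|_{Y(\R)}$ via the random Strichartz estimate of Lemma \ref{random strichartz estimate}, and then promote these moment bounds to the required exponential tail estimate through Lemma \ref{L-D estimate2}. The key preparatory move is a reduction to $L^2$ data: writing $f = \langle\nabla\rangle^{-s} h$ with $\|h\|_{L_x^2} = \|f\|_{H_x^s}$, the fact that $\Box_j$ and $\langle\nabla\rangle^{-s}$ commute as Fourier multipliers gives $e^{it\Delta} f^\omega = \langle\nabla\rangle^{-s} e^{it\Delta} h^\omega$, so all estimates can be referred back to a free evolution of a randomized $L^2$ function.

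For the first summand, I rewrite $\langle\nabla\rangle^{s+a/2} e^{it\Delta} f^\omega = \langle\nabla\rangle^{a/2} e^{it\Delta} h^\omega$ and apply Lemma \ref{random strichartz estimate} with $(q,r)=(2,5)$, the Schr\"odinger-admissible companion $r_0 = 10/3$, and $s' = a/2$. A short computation shows $a(d/r_0 - d/r) = a/2$, so the choice sits exactly at the endpoint of the admissible range and delivers $\sqrt{p}\,\|f\|_{H_x^s}$. For the three remaining summands $\|e^{it\Delta} f^\omega\|_{L_t^q L_x^r}$ with $(q,r) \in \{(4,5),(4,4),(6,3)\}$, I split on the sign of $s$: when $s \geq 0$, boundedness of the Bessel potential $\langle\nabla\rangle^{-s}$ on $L_x^r$ reduces matters to Lemma \ref{random strichartz estimate} at $s' = 0$; when $s < 0$, I apply the lemma with $s' = -s > 0$, using the $r_0$-values $5/2,\, 5/2,\, 30/13$ respectively, which yield admissibility margins $a(d/r_0 - d/r)$ equal to $a,\, 3a/4,\, a/2$. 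The assumption $a > \max\{4 - 2s,\, 5 - 4s,\, -s/2,\, 3\}$ in Definition \ref{randomization} is calibrated precisely so that $-s$ falls below each of these three thresholds.

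Combining the four bounds via the triangle inequality in $L_\omega^p$ produces
\[
\bigl\| \|e^{it\Delta} f^\omega\|_{Y(\R)} \bigr\|_{L_\omega^p(\Omega)} \lesssim \sqrt{p}\, \|f\|_{H_x^s}
\]
for all sufficiently large $p$. Lemma \ref{L-D estimate2} with $A = \|f\|_{H_x^s}$ then converts this moment bound into the exponential tail estimate claimed in the proposition, and almost sure finiteness follows by letting $\lambda \to \infty$.

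The main obstacle is not an analytic one but a bookkeeping check: verifying simultaneously, for all four space-time exponent pairs, that the Schr\"odinger scaling identity $d(\tfrac12 - \tfrac{1}{r}) - \tfrac{2}{q} = \alpha \geq 0$ places the pair in $\cup_{\alpha \geq 0}\Lambda_\alpha$ and that the required derivative loss $-s$ (in the $s < 0$ case) is absorbed by $a(d/r_0 - d/r)$. The delicate pair is $(6,3)$, which produces the tightest margin $a/2$ and is the reason the condition on $a$ in Definition \ref{randomization} must scale linearly with $|s|$ rather than sub-linearly.
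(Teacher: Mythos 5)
Your proof is correct and follows essentially the same route as the paper: apply Lemma \ref{random strichartz estimate} to each of the four $Y$-norm components with exactly the same choices $(q,r,r_0)\in\{(2,5,\tfrac{10}{3}),(4,5,\tfrac52),(4,4,\tfrac52),(6,3,\tfrac{30}{13})\}$ and derivative margins $\tfrac a2, a, \tfrac{3a}{4}, \tfrac a2$, then convert the resulting $\sqrt p$ moment bound into a Gaussian tail via Lemma \ref{L-D estimate2}. The only cosmetic difference is that you make explicit the reduction $f=\langle\nabla\rangle^{-s}h$ (using that $\Box_j$ commutes with Fourier multipliers), whereas the paper absorbs this into its use of $\|f\|_{H^s_x}$ and the observation $s>\tfrac54-\tfrac a4>-\tfrac a2$.
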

\begin{proof}
For simplicity, we assume that $(\omega,t,x)\in\Omega\times\R\times\R^5$. Let $p>6,$ by Lemma \ref{random strichartz estimate} and $(2,\frac{10}{3})\in\Lambda_0$, we derive that
\begin{equation*}
\begin{split}
\|\langle\nabla\rangle^{s+5a(\frac{3}{10}-\frac{1}{5})}e^{it\Delta}f^\omega\|_{L^p_\omega L_t^2L_x^5}
\lesssim\sqrt p\|f\|_{H_x^s},
\end{split}
\end{equation*}
i.e.
\begin{equation*}
\begin{split}
\|\langle\nabla\rangle^{s+\frac{1}{2}a}e^{it\Delta}f^\omega\|_{L^p_\omega L_t^2L_x^5}
\lesssim\sqrt p\|f\|_{H_x^s},
\end{split}
\end{equation*}
Similarly, using Lemma \ref{random strichartz estimate}, $(4,\frac{5}{2})\in\Lambda_0$ and $(6,\frac{30}{13})\in\Lambda_0$, we find
\begin{equation*}
\begin{split}
\|\langle\nabla\rangle^{s+a} e^{it\Delta}f^\omega\|_{L^p_\omega L_t^4L_x^5}
&\lesssim\sqrt p\|f\|_{H_x^s},\\
\|\langle\nabla\rangle^{s+\frac{3}{4}a} e^{it\Delta}f^\omega\|_{L^p_\omega L_{t,x}^4}
&\lesssim\sqrt p\|f\|_{H_x^s},\\
\|\langle\nabla\rangle^{s+\frac{1}{2}a} e^{it\Delta}f^\omega\|_{L^p_\omega L_t^6L_x^3}
&\lesssim\sqrt p\|f\|_{H_x^s}.
\end{split}
\end{equation*}

Since $s>\frac{5}{4}-\frac{1}{4}a>-\frac{1}{2}a=\max\{-a,-\frac{1}{2}a,-\frac{3}{4}a\}$ for $a\in\mathbb{N}$, we have
\begin{equation*}
\|e^{it\Delta}f^\omega\|_{Y(\R)}\lesssim \sqrt p\|f\|_{H_x^s}.
\end{equation*}
Using Lemma \ref{L-D estimate2}, we get
\begin{equation}
\mathbb{P}(\{\omega\in\Omega:\|e^{it\Delta} f^\omega\|_{Y(\R)}>\lambda\})\leq C\exp\{-c\lambda^2\|f\|_{H_x^s(\R^5)}^2\}.
\end{equation}
This completes the proof of Proposition \ref{Y random bound}.
\end{proof}

Finally, we complete the proof of Theorem \ref{main theorem}.
\begin{proof}[Proof of Theorem \ref{main theorem}]
It is sufficient to prove the case of $s<0$, see Remark \ref{s<0}.
Fix $(\omega,t,x)\in\Omega\times\R\times\R^5.$
Making a high-low frequency decomposition for the random initial data
\begin{equation*}
u_0^\omega=P_{<N_0}u_0^\omega+P_{\geq N_0}u_0^\omega,
\end{equation*}
where $N_0\in 2^{\mathbb{N}}$ to be determined later. Then, let $w$ satisfy equation \eqref{NLS-hartree-w2} with the initial data $w_0(x)=P_{< N_0}u_0^\omega$, i.e.
\begin{equation}
\begin{cases} 
i\pa_t w+\Delta w=(|\cdot|^{-4}\ast|w|^2)w+e,\\
w(0,x)=P_{<N_0}u_0^\omega,
\end{cases}
\end{equation}
where
\begin{equation}
e=(|\cdot|^{-4}\ast|u|^2)u-(|\cdot|^{-4}\ast|w|^2)w,
\end{equation}
and $v$ satisfies the equation
\begin{equation}\label{section 6 v}
\begin{cases} 
i\pa_t v+\Delta v=0,\\
v(0,x)=P_{\geq N_0}u_0^\omega.
\end{cases}
\end{equation}
From \eqref{section 6 v}, we get $v(t,x)=e^{it\Delta}P_{\geq N_0}u_0^\omega$. In order to prove Theorem \ref{main theorem}, it is sufficient to show that the conditions \eqref{deterministic Condition1} and \eqref{deterministic Condition2} can be satisfied.
By Proposition \ref{Y random bound} and boundeness of the operator $P_{\geq N_0}$ (Lemma \ref{Bernstein estimates}), we get
\begin{equation}
\|v\|_{Y(\R)}=\|e^{it\Delta}P_{\geq N_0}u_0^\omega\|_{Y(\R)}\leq C\|u_0\|_{H_x^s}.
\end{equation}
Combining this with $u_0\in H_x^s$,
\begin{equation}
\|u^\omega_0\|_{H_x^s}+\|v\|_{Y(\R)}<C\|u_0\|_{H_x^s}.
\end{equation}

It remains to prove condition \eqref{deterministic Condition2}. Using Lemma \ref{L-D estimate1}, for any $p\geq 2$, we have
\begin{equation*}
\|w_0\|_{L_\omega^pL_x^2}\lesssim\sqrt p\|\Box_j P_{<N_0}u_0\|_{L_x^2\ell_j^2}\lesssim\sqrt p\|P_{<N_0}u_0\|_{L_x^2}\lesssim\sqrt p N_0^{-s}\|u_0\|_{H_x^s},
\end{equation*}
and
\begin{equation*}
\|w_0\|_{L_\omega^p\dot H^1_x}\lesssim\sqrt p N_0^{1-s}\|u_0\|_{H_x^s}.
\end{equation*}
According to H\"{o}lder's inequality, Young's inequality and the Hardy-Littlewood-Sobolev inequality \eqref{Hardy-Littlewood Sobolev inequality formula}, the potential energy can be control as follows:
\begin{equation*}
\begin{split}
\left|\iint_{\R^5\times\R^5}\frac{|u^\omega(0,x)|^2|u^\omega(0,y)|^2}{|x-y|^4}{\rm d}x{\rm d}y\right|
&=\|(|\cdot|^{-4}\ast|u^\omega_0|^2)|u^\omega_0|^2\|_{L_x^1}\\
&\lesssim
\||\cdot|^{-4}\ast|u^\omega_0|^2\|_{L_x^{\frac{5}{4}}}\||u^\omega_0|^2\|_{L_x^5}\\
&\lesssim
\||u^\omega_0|^2\|_{L_x^1}\|u^\omega_0\|^2_{L_x^{10}}\\
&\lesssim
\|u^\omega_0\|_{L_x^2}^2\|u^\omega_0\|^2_{L_x^{10}}.
\end{split}
\end{equation*}
Using Minkowski's inequality, Lemma \ref{p-q estimate} and Lemma \ref{L-D estimate1},  for any $p\geq10$, we have
\begin{equation*}
\|u^\omega_0\|_{L_\omega^p L_x^{10}}
\lesssim
\sqrt p\|\Box_ju_0\|_{L_x^{10}\ell_j^2}
\lesssim
\sqrt p\|\langle\nabla\rangle^{-2a}\Box_j u_0\|_{L_x^{2}\ell_j^2}
\lesssim
\sqrt p\|\langle\nabla\rangle^{-2a}u_0\|_{L_x^2}.
\end{equation*}
Since $s>-2a$, 
\begin{equation*}
\|u^\omega_0\|_{L_\omega^p L_x^{10}}
\lesssim
\sqrt p\|u_0\|_{H_x^s}.
\end{equation*}
And it is easy to obtain that
\begin{equation*}
\|u^\omega_0\|_{L_\omega^pL_x^2}\lesssim\sqrt p\|u_0\|_{H_x^s}.
\end{equation*}
For any $M\geq 1,$ let $\Omega_M$ be defined by
\begin{equation}\label{Omega M}
\begin{split}
\Omega_M=
&\left\{\omega\in\Omega:\|u^\omega_0\|_{H_x^s}+\|v\|_{Y(\R)}<M\|u_0\|_{H_x^s};\right.\\
&\qquad\qquad
\left.N_0^s\|w_0\|_{L_x^2}+N_0^{s-1}\|w_0\|_{\dot H^1_x}+\|u_0^\omega\|_{L_x^2}+\|u_0^\omega\|_{L_x^{10}}<M\|u_0\|_{H_x^s}\right\}.  
\end{split}
\end{equation}
Then, for any $\omega\in\Omega_M,$ we have
\begin{equation*}
M_w(0)\leq CM^2N_0^{-2s}\|u_0\|_{H_x^s}^2,   
\end{equation*}
and 
\begin{equation*}
\begin{split}
E_w(0)
&\leq
\|w_0\|_{H_x^1}^2+\|(|x|^{-4}\ast|u^\omega_0|^2)|u^\omega_0|^2\|_{L_x^1}\\
&\leq
CM^2N_0^{2-2s}\|u_0\|_{H_x^s}^2+CM^4\|u_0\|_{H_x^s}^4\\
&\leq
CM^2N_0^{2-2s}(\|u_0\|_{H_x^s}^2+M^2\|u_0\|_{H_x^s}^4).
\end{split} 
\end{equation*}
Therefore, for any $\omega\in\Omega_M,$ we can get that $u_0,$ $v$ and $w_0$ satisfy the conditions \eqref{deterministic Condition1} and \eqref{deterministic Condition2}, where 
\begin{equation*}
A=A(M,\|u_0\|_{H_x^s})
\coloneqq
\max\{CM\|u_0\|_{H_x^s},CM^2\|u_0\|_{H_x^s}^2,CM^4\|u_0\|_{H_x^s}^4\},
\end{equation*}
and 
\begin{equation*}
N_0\lesssim\left(\frac{1}{2}A\right)^{2s+\frac{a}{2}-\frac{3}{2}}.
\end{equation*}
Let $\tilde\Omega=\bigcup_{M=1}^\infty\Omega_M,$ we find that
$\mathbb{P}(\tilde\Omega)=1$. More precisely, according to Lemma \ref{L-D estimate2},
\begin{equation*}
\begin{split}
\mathbb{P}(\tilde\Omega^c)
=\mathbb{P}(\bigcap_{M=1}^\infty\Omega^c_M)
\leq\lim_{M\to\infty}\mathbb{P}(\Omega^c_M)
\overset{\text{Lemma \ref{L-D estimate2}}}{\lesssim}\lim_{M\to\infty}\exp\{-cM^2\}=0.
\end{split}
\end{equation*}
Hence $\mathbb{P}(\tilde\Omega)=1.$ Then, by using Theorem \ref{deterministic problem}, for any $\omega\in\tilde\Omega$, or almost sure $\omega\in\Omega$, we can get \eqref{main th scattering}, this completes the proof of Theorem \ref{main theorem}.
\end{proof}

\noindent

\textbf{Acknowledgements}. T. Zhao was supported by National Natural Science Foundation of China (Grant No. 12101040 and 12271051).

\begin{center}

\end{center}

\end{document}